\newtheorem{theorem}{Theorem}
\newtheorem{corollary}[theorem]{Corollary}
\newtheorem{definition}[theorem]{Definition}
\newtheorem{lemma}[theorem]{Lemma}
\newtheorem{proposition}[theorem]{Proposition}
\newtheorem{remark}[theorem]{Remark}
\newcommand{\beq} {\begin{eqnarray*}}
\newcommand{\eeq} {\end{eqnarray*}}
\title{Optimal use of auxiliary information : information geometry and empirical process}
\newcommand{\R}{\mathbb{R}}
\newcommand{\N}{\mathbb{N}}
\author{Sofiane Arradi-Alaoui\thanks{Institut de Math\'{e}matiques de Toulouse UMR 5219 ; Universit\'{e} Paul Sabatier, France. \it{Sofiane.Arradi-alaoui@math.univ-toulouse.fr}}
}
\begin{document}
\maketitle
\begin{abstract}
We incorporate into the empirical measure $\mathbb{P}_n$ the auxiliary information given by a finite collection of expectation in an optimal information geometry way. This allows to unify several methods exploiting a side information and to uniquely define an informed empirical measure $\mathbb{P}_n^I$. These methods are shown to share the same asymptotic properties. Then we study the informed empirical process $\sqrt{n} ( \mathbb{P}_n^I - P) $ subject to a true information. We establish the Glivenko-Cantelli and Donsker theorems for $\mathbb{P}_n^I$ under minimal assumptions and we quantify the asymptotic uniform variance reduction. Moreover, we prove that the informed empirical process is more concentrated than the empirical process $\sqrt{n} ( \mathbb{P}_n - P) $ for all large $n$. Finally, as an illustration of the variance reduction, we apply some of these results to the informed empirical quantiles. 
\end{abstract}

\textit{Keywords:} information geometry, side information, auxiliary information, empirical processes, empirical likelihood, uniform central limit theorem, variance reduction.

\textit{AMS  Subject Classification:} 62B11 ; 62G30 ; 62G20 ; 60F17.

\section{Introduction}\label{sec:intro}

We call auxiliary information --or side information-- any information external to an observed statistical experiment that concerns the underlying distribution. For instance, in order to improve the quality of a survey analysis, it is customary to incorporate any reliable auxiliary information available at the time of the survey such as the knowledge of one or more parameters of this population determined exactly by an exhaustive census. This principle finds its origin several centuries ago, according to \cite{bru}. Indeed, around 1740 the magistrate Jean-Baptiste François de La Michodière wanted to estimate the size of the French population by assuming that the number of marriages, births and deaths is proportional to the size of the population. He then introduced the ratio estimator which was validated by Laplace \cite{laplace}. This method is for instance detailed in \cite{sondage}. We note that it turns out to be a special case of \cite{tarima}. More recently, several authors in survey analysis and statistics have worked on the incorporation of auxiliary information after or before sampling -- see \cite{calibration}, \cite{phd_aux_sondage}.
\\

In this article, we focus on the auxiliary information which concerns the underlying distribution of the data. More precisely, we assume that the auxiliary information is given by a finite collection of expectations. In the above mentioned case of a survey the side information can be given by the expectation of a random variable on the population. Rather few systematic analysis have been carried out on how to use at best such an auxiliary information in a general setting, despite the fact that in many case studies such a methodology is used. In \cite{AB}, the raking-ratio method allows to incorporate the auxiliary information of the probabilities of one or many partitions of a set. This is not a perfect projection of the empirical measure on the set of constraints since it is a sequential procedure, incorporating each information after the other, not simultaneously. However the variance of large classes of estimators simultaneously decreases as the sample size tends to infinity faster than the number of successive partitions. The case of an independent empirical information, as for distributed data, is investigated in \cite{albertus_learning}. In \cite{tarima}, a general method is proposed to incorporate a general auxiliary information. This approach consists in minimizing the variance over a class of unbiased estimators, that is equivalent to find the smallest dispersion ellipsoid. In \cite{albertus} this method is applied to an auxiliary information brought by a finite collection of expectations. In particular, it is shown that this method is better than the raking-ratio \cite{AB} with respect to variance reduction. Moreover, in \cite{owen} a different method based on empirical likelihood is developed to also incorporate an auxiliary information given by a finite collection of expectations. The latter two methods will be compared and connected through our definition of an informed empirical measure. In \cite{zhang} the Glivenko-Cantelli and Donsker theorems are established for the empirical likelihood method -- in the special case of the class of functions $\mathcal{F} = \left\{ 1_{]- \infty, t ]}, \ t \in \R \right\}$. For more general classes, the Glivenko-Cantelli and Donsker theorems are established in \cite{albertus} under stronger assumptions by using the strong approximation results of \cite{approximation_forte_berthet}. In addition, some works have also been done on U-statistics in the presence of auxiliary information \cite{ustat} and, more recently, on informed statistical tests \cite{albertus_ztest}, \cite{albertus_chitest}.
\\

Our contribution is to define and study an informed empirical measure supported by the sample that is optimal in the sense of information geometry. We thus intend to incorporate the auxiliary information given by expectations into the empirical measure itself by defining properly the geometrical setting in which the latter can be projected. This leads to define two projection measures, the first of which satisfies the same optimization problem as that of the empirical likelihood \cite{owen}. Next we prove that it is possible to approximate these two projection measures by a common measure we call the informed empirical measure. This informed empirical measure is far easier to compute numerically than the true projections and turns out to coincide with the adaptive estimator of the measure with auxiliary information defined in \cite{albertus}. Furthermore we show that these three measures are so close that they share the same asymptotic properties in the sense of empirical process theory -- in particular the same limiting Gaussian process. This allows to unify several methods aiming to incorporate a side information, among which those mentioned above. We establish under minimal assumptions the limit theorems for the informed empirical measure indexed by a general class of functions. As a by product this extends the asymptotic result of \cite{zhang}. Moreover we derive a concentration result which shows that the informed empirical process is always more concentrated than the classical empirical process when the sample size is large enough.      
\\

The paper is structured as follows. In Section \ref{section_geometrie}, we introduce the geometrical framework and prove that the set of constraints associated to the auxiliary information has a submanifold structure. Then we show that an optimal method is to minimize the Kullback-Leibler divergence and its dual version on the set of constraints. The readers less familiar with geometrical notions can find reminders on information geometry in the book \cite{amari} -- or could refer directly to Corollary \ref{theoreme geometrie}. In Section \ref{mesure_projection}, we study these two optimization problems. More precisely, we discuss the existence and uniqueness of their solution and we give an asymptotic theorem for the Lagrange multipliers. In Section \ref{section_def_mesure}, we prove that there exists a common approximation of these solutions which allows to define the informed empirical measure -- see Definition \ref{definition_mesure_informée}. 
Then we study the informed empirical measure's weights. In Section \ref{section_resultat_asymptotique} we establish the Glivenko-Cantelli and Donsker theorems for a general class of functions $\mathcal{F}$ about the informed empirical process under minimal assumptions. We also quantify the asymptotic uniform variance reduction, which justifies the use of auxiliary information. In Section \ref{concentration_section} we derive a concentration result about the informed empirical process. Finally in Section \ref{section_quantile},  as an illustration of the variance reduction, we apply these results to the informed empirical quantile and prove that the informed estimator is asymptotically more efficient than the classical empirical quantile. As a special case, we find the same asymptotic result as in \cite{zhang_quantile}.

% which shows that the informed empirical process is more concentrated than the classical empirical process for all sufficiently large $n$.
% Our main contribution is to answer the following two questions.
% \begin{enumerate}
%     \item Is there an optimal method to exploit the auxiliary information and modify accordingly the empirical measure ? 
%     \item Do the above methods share the same asymptotic properties and compare to the previous modification ?
% \end{enumerate}

% So there exists many injection methods of auxiliary information but we don't know if these methods are equivalents.  The motivation of this article is to present an optimal injection method of auxiliary information by using the information geometry. Thereafter,   

\section{Framework}\label{section_framework}
Let $(X_n)_{n \in \N^*}$ be a sequence of independent and identically distributed random variables (i.i.d.) defined on a probability space $\left( \Omega, \mathcal{B}, \mathbb{P} \right)$ and taking values in a measurable space $\left( \mathcal{X}, \mathcal{A} \right)$. The distribution of $X_1$ is  denoted $P : =\mathbb{P}^{X_1}$. Moreover, we assume that an auxiliary information $I$ about $P$ is available. In this article, we focus on the following particular case. We suppose that $I$ is the information brought by a finite collection of expectations with respect to $P$. More precisely, let $m \in \N^*$ and  $g  = \left(g_1, \cdots, g_m \right)^T : \mathcal{X} \to \R^m$ be an integrable function with respect to $P$.  We assume that $I$ is given by 
$$Pg := \int_{\mathcal{X}} g dP := \left(\int_{\mathcal{X}} g_1 dP, \cdots ,\int_{\mathcal{X}} g_m dP \right)^T . $$ 
We shall assume at times that $Pg = 0$ -- otherwise set $h = g - Pg$. 

We denote $\llbracket n , p \rrbracket $ the set of integers between $n$ and $p$ where $n < p$ are two integers. For each $n \in \N^*$, the random data set is denoted $\mathcal{Z}_n = \left\{X_1, \cdots, X_n \right\}$ and $\mathbb{P}_n$ the empirical measure defined by 
\begin{align*}
    \mathbb{P}_n = \frac{1}{n} \sum_{i=1}^n \delta_{X_i}.
\end{align*}
If $n \in \N^*$ is fixed then we denote $\mathcal{Z} = \mathcal{Z}_n$ and $\mathcal{P} \left(\mathcal{Z}\right)$ the set of probability measures on $\mathcal{Z}$ with positive weights. Moreover the informed empirical measure is denoted $\mathbb{P}_n^I$ and will be defined at Section \ref{section_def_mesure} in Definition \ref{definition_mesure_informée}.

Our notation for stochastic convergences are as follows. Let $(Y_n)_{n \in \N^*}$ a sequence of random variables with values in $\R^m$ with $m \in \N^*$ and let $(a_n)_{n \in \N^*} \subset \R^*$ be a real valued sequence. We write $Y_n = o_p(a_n)$ (resp. $o_{a.s.}(a_n)$) if $\left(Y_n/a_n\right)_{n \in \N^*} $ tends to $0$ in probability (resp. almost surely) as $n\to +\infty$. We write $Y_n = O_p(a_n)$ if $\left(Y_n/a_n\right)_{n \in \N^*} $ is tight. The fact that $(Y_n)_{n \in \N^*} $ converges in distribution to a random variable $Y$ as $n\to +\infty$ is denoted by $Y_n \Rightarrow Y$.

\section{A geometrical approach of auxiliary information}\label{section_geometrie}

We intend to incorporate optimally an auxiliary information into the empirical measure. The notion of optimality may be debated but the information geometry approach seems to be a coherent and interesting answer to the problem of incorporating an auxiliary information. 

Assume that an auxiliary information $I$ about $P$ is available. For $Q \in \mathcal{P} \left(\mathcal{Z}\right)$, we denote $Q \sim I$ the fact that $Q$ satisfies the auxiliary information $I$. The set of probability measures on $\mathcal{Z}$ which satisfy $I$ is defined by 
\begin{align*}
    \mathcal{P}^I \left(\mathcal{Z} \right) = \left\{ Q \in \mathcal{P}(\mathcal{Z}) , \  \  Q \sim I \right\}.
\end{align*}
As mentioned in Section \ref{section_framework}, the weights of $Q$ are positive and $I$ is given by a finite collection of expectations $Pg$. However Section \ref{sous_section_projection} remains valid for more general definitions of auxiliary information $I$. We have 
\begin{align*}
    \mathcal{P}^I \left(\mathcal{Z} \right) = \left\{ Q \in \mathcal{P}(\mathcal{Z}) , \  \  Qg= Pg \right\}.
\end{align*}
Assuming that the basic notions of information geometry are known -- such as connection, geodesic etc -- we only recall the notion of autoparallel submanifold. Let $S$ be a manifold, $M$ be a submanifold of $S$ and $\nabla$ a connection on $S$. We say that $M$ is $\nabla$-autoparallel if for every vector fields $X,Y$ on $M$, $\nabla_{X} Y$ is also a vector field on $M$.  
In this context, since $\mathcal{P} \left(\mathcal{Z}\right)$ is a finite mixture model it is also a differential manifold endowed with the dually flat structure $(\mathcal{P}(\mathcal{Z}),g_F, \nabla^{(1)},\nabla^{(-1)}) $ where $g_F$ is the Fisher metric, $\nabla^{(1)}$ is the $1$-connection and $\nabla^{(-1)}$ is the $(-1)$-connection. Moreover, the canonical divergence associated to $\mathcal{P}\left( \mathcal{Z}\right)$ is the  Kullback–Leibler divergence $KL$ -- see \cite{KL_geometry}.

In order to use the auxiliary information $I$ let project $\mathbb{P}_n \in \mathcal{P} \left(\mathcal{Z}\right)$  on $\mathcal{P}^I \left(\mathcal{Z} \right)$ in the sense of information geometry. To define properly this projection we first show that $\mathcal{P}^I \left(\mathcal{ Z} \right)$ is a submanifold, then we recall the projection theorem in information geometry and formulate our existence result.

\subsection{Submanifold structure of $\mathcal{P}^I \left(\mathcal{ Z} \right)$}

The following result states that $\mathcal{P}^I \left(\mathcal{Z}\right)$ is a $(n-2)$-dimensional submanifold in the case $m=1$.
\begin{proposition}\label{proposition_sous_variete_cas_particulier}
Assume that there exists $i \neq j$ such that $g \left(X_i \right) \neq g \left( X_j \right)$
and $Pg$ belongs to the convex hull of $\left\{ g(X_1), \cdots ,g(X_n)\right\}$.
Then $ \ \mathcal{P}^I \left( \mathcal{Z} \right)$ is a $(n-2)$-dimensional submanifold of $\mathcal{P}(\mathcal{Z})$.

\end{proposition}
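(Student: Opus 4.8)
The plan is to realize $\mathcal{P}^I(\mathcal{Z})$ as the zero set of a smooth map and then invoke the regular value theorem. Identify $\mathcal{P}(\mathcal{Z})$ with the open simplex $\Delta = \{q = (q_1,\dots,q_n) \in \R^n : q_i > 0, \ \sum_i q_i = 1\}$, which is an $(n-1)$-dimensional submanifold of $\R^n$. Since we are in the case $m=1$, write $a_i = g(X_i) - Pg$ for $i \in \llbracket 1,n \rrbracket$; the constraint $Qg = Pg$ becomes the single affine equation $\varphi(q) := \sum_{i=1}^n a_i q_i = 0$. Thus $\mathcal{P}^I(\mathcal{Z}) = \varphi^{-1}(0)$ with $\varphi : \Delta \to \R$ smooth (indeed linear). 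If $0$ is a regular value of $\varphi$, then $\mathcal{P}^I(\mathcal{Z})$ is a submanifold of $\Delta$ of dimension $(n-1) - 1 = n-2$, which is the claim.

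First I would check that $\mathcal{P}^I(\mathcal{Z})$ is nonempty: this is exactly where the hypothesis that $Pg$ lies in the convex hull of $\{g(X_1),\dots,g(X_n)\}$ enters, since a convex combination with the prescribed mean corresponds to a point $q \in \Delta$ with $\varphi(q) = 0$ (one must also ensure strictly positive weights can be chosen, which follows by perturbing a convex representation slightly using the assumption that not all $a_i$ vanish, or more directly by averaging with the barycenter). Next I would verify regularity of $0$: the differential of $\varphi$ at any $q$, restricted to the tangent space $T_q\Delta = \{v \in \R^n : \sum_i v_i = 0\}$, is the linear functional $v \mapsto \sum_i a_i v_i$. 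This functional is nonzero on $T_q\Delta$ precisely when the vector $(a_1,\dots,a_n)$ is not proportional to $(1,\dots,1)$, i.e. when the $a_i$ are not all equal, i.e. when the $g(X_i)$ are not all equal — which is guaranteed by the hypothesis that $g(X_i) \neq g(X_j)$ for some $i \neq j$. Hence $d\varphi_q$ is surjective onto $\R$ at every $q \in \mathcal{P}^I(\mathcal{Z})$, so $0$ is a regular value.

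The main obstacle — really the only subtle point — is the nonemptiness together with positivity of weights: the regular value theorem gives nothing if $\varphi^{-1}(0) = \emptyset$, and one must be careful that the convex-hull hypothesis yields a point with all coordinates strictly positive rather than merely nonnegative. I would handle this by taking any convex combination $\lambda \in \R^n$, $\lambda_i \geq 0$, $\sum \lambda_i = 1$, with $\sum_i \lambda_i g(X_i) = Pg$, and then forming $q_\varepsilon = (1-\varepsilon)\lambda + \varepsilon u$ where $u = (1/n,\dots,1/n)$; since $\sum_i a_i = \sum_i (g(X_i) - Pg)$ and... more cleanly, one should first recenter so that $Pg$ is in the \emph{relative interior} of the convex hull, or simply note that $\sum_i u_i a_i$ need not vanish, so instead one averages two convex representations of $Pg$ lying on "opposite sides," which exist as soon as $Pg$ is not an extreme point; when $Pg$ happens to be attained only at vertices one argues directly. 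I would therefore state the nonemptiness-with-positivity as the first lemma-step, prove it by this convexity argument, and then the submanifold conclusion follows immediately from the regular value theorem applied to the linear map $\varphi$. Finally I would remark that the argument is the $m=1$ instance of a general pattern: for arbitrary $m$ one replaces $\varphi$ by $q \mapsto \sum_i (g(X_i) - Pg) q_i \in \R^m$ and asks for the rank of the associated matrix to be maximal, foreshadowing the general submanifold result.
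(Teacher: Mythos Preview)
Your approach is essentially the paper's: both identify $\mathcal{P}(\mathcal{Z})$ with the open simplex and apply the submersion/regular-value theorem to the linear constraint map, using the hypothesis $g(X_i)\neq g(X_j)$ to guarantee surjectivity of the differential. The only difference is presentational: the paper works in an explicit chart (eliminating $q_n$ and computing $D\gamma(q)=(g(X_1)-g(X_n),\ldots,g(X_{n-1})-g(X_n))$), whereas you argue directly on the tangent space $T_q\Delta=\{v:\sum_i v_i=0\}$; the two computations are the same linear-algebra fact in different coordinates. Your concern about strict positivity of the weights is legitimate and is in fact glossed over in the paper, which simply asserts nonemptiness of $\mathcal{E}$ from the convex-hull hypothesis without checking that a representation with all $q_i>0$ exists---so on that point you are more scrupulous than the original, even if your sketch of the fix remains incomplete.
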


\begin{proof}
Remind that $]0,1[^n$ is a submanifold of $\R^n$ as it is an open set of $\R^n$. Set 
\begin{align*}
    \mathcal{S} = \left \{ q \in ]0,1[^n, \ \sum_{i=1}^n q_i = 1\right\}.
\end{align*}
We first show that $\mathcal{S}$ is a $(n-1)$-dimensional submanifold of  $]0,1[^n$. Define the function $\theta : ]0,1[^n \rightarrow \R$ by $\theta(q) = \sum_{i=1}^n q_i.$

Observe that $\mathcal{S} = \theta^{-1}(\{1\})$. So it is enough to prove that $\theta$ is a submersion. Indeed, $\theta$ is differentiable and for all $q \in ]0,1[^n$,
\begin{align*}
    \  D\theta(q)(h) = \theta(h),\ h \in \R^n.
\end{align*}
So $D\theta(q)$ is surjective and $\theta$ is a submersion, hence $\mathcal{S}$ is a $(n-1)$-dimensional submanifold of  $]0,1[^n$. Let endow $\mathcal{S}$ with the following global chart  $\left( \mathcal{S}, \pi \right)$  
\begin{align*}
    \pi : \mathcal{S} &\to U \subset \R^{n-1} \\
          q &\mapsto (q_1,\cdots ,q_{n-1})
\end{align*}
where $U = \left\{ (q_1, \cdots ,q_{n-1}) \in \R^{n-1}, \  q_i >0, \ i \in \llbracket 1,n-1 \rrbracket  \ , \  \ \sum_{i=1}^{n-1} q_i < 1 \right\}$. Similarly endow $\mathcal{P}(\mathcal{Z})$ with the following global chart $\left( \mathcal{P}(\mathcal{Z}), \varphi \right)$  
\begin{align*}
    \varphi :\mathcal{P}(\mathcal{Z}) &\to U \subset \R^{n-1} \\
          Q &\mapsto (q_1, \cdots ,q_{n-1}).
\end{align*}
Next consider the following one to one mapping
\begin{align*}
    \psi : \mathcal{P}(\mathcal{Z}) &\to \mathcal{S} \\
            Q &\mapsto q.
\end{align*}
Notice that $\psi = \pi^{-1} \circ \varphi$. Since $\pi$ and $\varphi$ are diffeomorphims and  $\text{dim} \    \mathcal{P}(\mathcal{Z}) = \text{dim} \ \mathcal{S} = n-1$, we deduce that $\psi$ is a diffeomorphim. So $\psi^{-1}$ is also a diffeomorphism and thus an embedding. Observe that $\mathcal{P}^I \left(\mathcal{Z} \right) = \psi^{-1} \left( \mathcal{E} \right)$

where $\mathcal{E} = \left\{ q \in \mathcal{S}, \ \sum_{i=1}^n q_i g(X_i) = Pg \right\}$ is not empty because $Pg$ belongs to the convex hull of $\{ g(X_1),...,g(X_n) \}$. So it is enough to prove that  $\mathcal{E}$ is a $(n-2)$-dimensional submanifold of  $\mathcal{S}$. For this, it is sufficient to verify that
\begin{align*}
    f : \mathcal{S} &\to \R \\
        q &\mapsto  \sum_{i=1}^n q_i g(X_i)
\end{align*}
is a submersion. Let $\gamma := f \circ \pi^{-1} : U \to \R$ be defined, for all $q \in U$, by
\begin{align*}
    \gamma(q) = \sum_{i=1}^{n-1} q_i g(X_i) + \left(1 - \sum_{i=1}^{n-1} q_i \right) g(X_n).
\end{align*}
So $\gamma$ is differentiable and for all $q \in U$ 
\begin{align*}
    D\gamma(q) = (g(X_1)-g(X_n), \cdots , g(X_{n-1})-g(X_n)).
\end{align*}
Since there exists  $i \neq j $ such that $X_i \neq X_j$, we deduce that $D\gamma(q)$ is surjective. Therefore $f$ is a submersion and $\mathcal{E}$ is a $(n-2)$-dimensional submanifold. We conclude that $\mathcal{P}^I \left( \mathcal{Z} \right) = \psi^{-1}(\mathcal{E})$ is a $(n-2)$-dimensional submanifold, as $\psi^{-1}$ is an embedding. 

\end{proof}

Proposition \ref{proposition_sous_variete_cas_particulier} can be generalised for a vector of functions $g = (g_1, \cdots , g_m)^T$ with $m \in \llbracket 1,n-1 \rrbracket$. Assume that $Pg = 0$. Set 
\begin{align*}
     \forall  j \in [|1,m|], \  N_j(X) &= (g_j(X_1)-g_j(X_n), \cdots,g_j(X_{n-1})-g_j(X_n))^T ,\\
     \forall  j \in [|1,m|], \  g_j(X) &= (g_j(X_1),\cdots ,g_j(X_n))^T.
\end{align*}
Observe that
\begin{align}
    \text{dim} \left( Vect( g(X_1) - g(X_n) ,\cdots , g(X_{n-1}) - g(X_n) )\right) = \text{dim} \left( Vect( (N_j(X))_{1 \leq j \leq m} )\right). \label{mj}
\end{align}

We are ready to state the main result of Section \ref{section_geometrie}.
\begin{proposition}\label{proposition_egalite_dimension}
Assume that $0$ belongs to the convex hull of  $\left\{ g(X_1),...,g(X_n)\right\}$ and the following equality of dimensions, 
    \begin{align*}
    l:= \dim \left( Vect( g_1(X),..., g_m(X) )\right) &= \dim  \left( Vect( g(X_1) - g(X_n) ,\cdots , g(X_{n-1}) - g(X_n) )\right) \leq m.
\end{align*}
Then $ \ \mathcal{P}^I \left( \mathcal{Z} \right)$ is a $(n-1-l)$-dimensional submanifold of $\mathcal{P}(\mathcal{Z})$.
\end{proposition}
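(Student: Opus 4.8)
The plan is to reuse the entire construction from the proof of Proposition \ref{proposition_sous_variete_cas_particulier} — the open simplex $\mathcal{S}\subset\,]0,1[^n$ with its global chart $(\mathcal{S},\pi)$, the chart $(\mathcal{P}(\mathcal{Z}),\varphi)$, and the diffeomorphism $\psi^{-1}=\varphi^{-1}\circ\pi$ — and to modify only the final step. Since $\psi^{-1}$ is an embedding and $\mathcal{P}^I(\mathcal{Z})=\psi^{-1}(\mathcal{E})$ with
\begin{align*}
\mathcal{E}=\left\{q\in\mathcal{S},\ \sum_{i=1}^n q_i g(X_i)=0\right\},
\end{align*}
it suffices to show that $\mathcal{E}$ is a $(n-1-l)$-dimensional submanifold of $\mathcal{S}$. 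As in Proposition \ref{proposition_sous_variete_cas_particulier}, $\mathcal{E}\neq\emptyset$ because $0$ lies in the convex hull of $\{g(X_1),\dots,g(X_n)\}$.

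I would then read the constraint through the chart: let $f:\mathcal{S}\to\R^m$, $q\mapsto\sum_{i=1}^n q_i g(X_i)$, so that $\mathcal{E}=f^{-1}(\{0\})$, and set $\gamma:=f\circ\pi^{-1}:U\to\R^m$, the \emph{affine} map
\begin{align*}
\gamma(q)=g(X_n)+\sum_{i=1}^{n-1}q_i\bigl(g(X_i)-g(X_n)\bigr).
\end{align*}
This is the essential difference with the case $m=1$ treated in Proposition \ref{proposition_sous_variete_cas_particulier}: $f$ need no longer be a submersion, because the rank of its differential is $l$, which may be strictly smaller than $m$. However, $\gamma$ being affine, $D\gamma(q)$ does not depend on $q$: it is the linear map $\R^{n-1}\to\R^m$ with columns $g(X_1)-g(X_n),\dots,g(X_{n-1})-g(X_n)$ (equivalently, with rows $N_1(X),\dots,N_m(X)$, the coincidence of the two associated ranks being exactly \eqref{mj}). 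Its rank is therefore constantly equal to $\dim\bigl(\mathrm{Vect}(g(X_1)-g(X_n),\dots,g(X_{n-1})-g(X_n))\bigr)=l$ by hypothesis.

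From here the argument closes in one step. Since $\gamma$ is affine with linear part of rank $l$, the set $\gamma^{-1}(\{0\})$ is the intersection of the open set $U$ with an affine subspace of $\R^{n-1}$ of dimension $(n-1)-l$; as it is nonempty, $\gamma^{-1}(\{0\})$ is an $(n-1-l)$-dimensional submanifold of $U$. (One may instead invoke the constant-rank theorem, which applies precisely because $D\gamma$ has constant rank $l$ on $U$.) Transporting along the diffeomorphism $\pi$ shows that $\mathcal{E}=f^{-1}(\{0\})=\pi^{-1}\bigl(\gamma^{-1}(\{0\})\bigr)$ is an $(n-1-l)$-dimensional submanifold of $\mathcal{S}$, and then $\mathcal{P}^I(\mathcal{Z})=\psi^{-1}(\mathcal{E})$ is an $(n-1-l)$-dimensional submanifold of $\mathcal{P}(\mathcal{Z})$ since $\psi^{-1}$ is an embedding.

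The only real obstacle is conceptual: the submersion argument of Proposition \ref{proposition_sous_variete_cas_particulier} breaks down as soon as $l<m$, and the fix is to exploit the affineness of $\gamma$, which makes the rank automatically constant — equal to $l$ — so that no local rank analysis is needed. Beyond that, one must only check that this constant rank is indeed the integer $l$ from the statement (this is exactly the assumed equality of dimensions combined with \eqref{mj}) and that $\mathcal{E}\neq\emptyset$, which is the sole purpose of the convex-hull hypothesis.
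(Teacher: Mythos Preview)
Your proof is correct and takes a genuinely different route from the paper's. The paper does not use the constant-rank/affine argument; instead it proves a technical linear-algebra lemma (Lemma~\ref{lemme_technique_algebre_lineaire}) showing that, under the assumed equality of dimensions, one can extract a subset $J\subset\llbracket 1,m\rrbracket$ of size $l$ for which both $(g_j(X))_{j\in J}$ and $(N_j(X))_{j\in J}$ are linearly independent. This lets the paper discard the redundant constraints --- observing that $\{Q:\,Qg=0\}=\{Q:\,Q\widetilde g=0\}$ with $\widetilde g=(g_j)_{j\in J}$ --- and then run the \emph{submersion} argument of Proposition~\ref{proposition_sous_variete_cas_particulier} verbatim on the reduced map $f:\mathcal{S}\to\R^l$, whose differential now has full rank $l$. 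Your approach is shorter and avoids Lemma~\ref{lemme_technique_algebre_lineaire} entirely: the affineness of $\gamma$ makes the rank of $D\gamma$ constant on $U$, so the level set is immediately an open piece of an affine subspace of dimension $n-1-l$. In fact your argument uses only the second of the two dimensions in the hypothesis (the column rank of $D\gamma$), whereas the paper's reduction step genuinely needs the equality with $\dim\mathrm{Vect}(g_1(X),\dots,g_m(X))$ to justify dropping constraints. The paper's route does buy something, though: it makes explicit which $l$ constraints are non-redundant, a reduction that is conceptually aligned with the later standing assumption~\eqref{hypothèse_utilisée}.
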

\begin{remark}
A sufficient condition to satisfy the equality of dimensions in Proposition \ref{proposition_egalite_dimension} is
\begin{align}
    \dim Vect \left( (1, \cdots,1)^T, M_1(X), \cdots , M_m(X) \right) = m+1 \label{hypothèse_utilisée}
\end{align}
with for all $k \in [|1,m|]$, $M_k(X) = g_k(X) = \left( g_k(X_1), \cdots, g_k(X_n) \right)^T$.
\end{remark}
\begin{proof}
We keep the same notations as in the previous proof.  It remains to prove that $\mathcal{E} = \left\{ q \in \mathcal{S}, \ \sum_{i=1}^n q_i g(X_i) =0  \right\}$ is a $(n-1-l)$-dimensional submanifold. For that, we need a technical lemma. 
\begin{lemma} \label{lemme_technique_algebre_lineaire}
Let $x^1,...,x^k \in \R^n$ with $k \in \llbracket 1,n-1 \rrbracket $ and $n \in \N^*$. Define, for $j \in \llbracket 1,k \rrbracket $,
\begin{align*}
 \ \tilde{x}^j = \left( x_1^j - x_n^j, \cdots , x_{n-1}^j - x_n^j \right)^T \in \R^{n-1}.
\end{align*}
Then 
\begin{align*}
    \dim \left( Vect( \tilde{x}^1, \cdots , \tilde{x}^k) \right) \leq \dim \left( Vect( x^1, \cdots , x^k) \right).
\end{align*}
Moreover if  $l :=\dim \left( Vect( \tilde{x}^1, \cdots , \tilde{x}^k) \right) = \dim \left( Vect( x^1, \cdots , x^k) \right)$

then there exists a subset $J \subset \llbracket 1,k \rrbracket $ of size $l$ such that
\begin{align*}
    \dim \left( Vect \left( (\tilde{x}^j)_{j \in J} \right) \right) = \dim \left( Vect \left( (x^j)_{j \in J} \right) \right).
\end{align*}

\end{lemma}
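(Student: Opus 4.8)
The plan is to view the map $x \mapsto \tilde x$ as the restriction to tuples of a single linear map, and then to reduce everything to elementary facts about images of subspaces. Concretely, define $\Phi : \R^n \to \R^{n-1}$ by $\Phi(x) = (x_1 - x_n, \ldots, x_{n-1} - x_n)^T$. This is linear, with kernel the line $\R\,(1,\ldots,1)^T$ (so it has rank $n-1$), and $\tilde x^j = \Phi(x^j)$ for every $j \in \llbracket 1,k \rrbracket$. Since $\Phi$ is linear, $Vect(\tilde x^1,\ldots,\tilde x^k) = \Phi\bigl(Vect(x^1,\ldots,x^k)\bigr)$, and the image of a subspace under a linear map has dimension at most that of the subspace; this proves the first inequality immediately.

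For the equality case, set $V = Vect(x^1,\ldots,x^k)$, so that $\dim V = \dim \Phi(V) = l$ by the hypothesis. First I would select a subset $J \subset \llbracket 1,k \rrbracket$ with $|J| = l$ such that $(\tilde x^j)_{j \in J}$ is a basis of $\Phi(V) = Vect(\tilde x^1,\ldots,\tilde x^k)$ — possible precisely because those vectors span an $l$-dimensional space. Then $(x^j)_{j\in J}$ is linearly independent: applying $\Phi$ to any vanishing linear combination of the $x^j$, $j \in J$, yields a vanishing linear combination of the $\tilde x^j$, $j \in J$, forcing all coefficients to be zero. As $|J| = l = \dim V$, the family $(x^j)_{j\in J}$ is therefore a basis of $V$, and we conclude $\dim Vect\bigl((\tilde x^j)_{j\in J}\bigr) = l = \dim Vect\bigl((x^j)_{j\in J}\bigr)$, which is the claim. (Equivalently, the equality $\dim V = \dim \Phi(V)$ makes $\Phi|_V$ injective by rank–nullity, which is the structural reason a common index set can be found.)

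There is essentially no hard step here; the only point to get right is the \emph{direction} in which the index set $J$ is chosen. Selecting $J$ on the $\tilde x$ side forces $|J|$ to equal $l$ and lets independence of $(x^j)_{j \in J}$ follow for free from linearity of $\Phi$. Choosing a basis among the $x^j$ first would instead require explicitly invoking injectivity of $\Phi|_V$ to keep the images independent. Both routes work, but the first gives the cleaner bookkeeping, and is what I would write up.
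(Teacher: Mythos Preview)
Your proof is correct and follows essentially the same route as the paper: both arguments rest on the observation that any linear dependence among the $x^j$ induces the same dependence among the $\tilde x^j$ (you package this as the linear map $\Phi$, the paper checks it coordinate-wise), and both then choose $J$ on the $\tilde x$ side and deduce independence of the corresponding $x^j$. Your explicit use of $\Phi$ and the remark that $\Phi|_V$ is injective by rank--nullity is a slightly cleaner formulation, but the underlying logic is identical.
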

\begin{proof} 
If $k=\dim \left( Vect( x^1, \cdots , x^k) \right)$ then there is nothing to prove. Assume that $\dim \left( Vect( x^1, \cdots , x^k) \right) < k$. So there exists $j \in \llbracket 1,k \rrbracket$ such that  $x^j = \sum_{i \neq j} \lambda_i x^i$ with  $\lambda_1 ,..., \lambda_k \in \R$. So for all $r\in \llbracket 1,n \rrbracket $ 
\begin{align*}
    x_r^j - x_n^j &= \sum_{i \neq j } \lambda_i x_r^i - \sum_{i \neq j } \lambda_i x_n^i = \sum_{i \neq j } \lambda_i ( x_r^i - x_n^i).
\end{align*}
In others words $\tilde{x}^j = \sum_{i \neq j} \lambda_i \tilde{x}^i$. We can conclude that 
\begin{align*}
    \dim \left( Vect( \tilde{x}^1, \cdots , \tilde{x}^k) \right) \leq \dim \left( Vect( x^1, \cdots , x^k) \right).
\end{align*}
Now, assume that  $l := \dim \left( Vect( \tilde{x}^1, \cdots , \tilde{x}^k) \right) = \dim \left( Vect( x^1, \cdots , x^k) \right).$

So there exists a subset  $J \subset \llbracket 1,k \rrbracket $ of size  $l$ such that
\begin{align*}
    l = \dim \left( Vect \left( (\tilde{x}^j)_{j \in J} \right) \right).
\end{align*}
But if  $\dim \left( Vect \left( (x^j)_{j \in J} \right) \right) < l$ then there exists $r \in J$ such that $x^r = \sum_{i \in J, \ i \neq r } \lambda_i x^i$ with $\lambda_1 ,..., \lambda_k \in \R$. By the above, we deduce that $\tilde{x}^r = \sum_{i \in J, \ i \neq r } \lambda_i \tilde{x}^i$. That contradicts the fact that 
\begin{align*}
    l = \dim \left( Vect \left( (\tilde{x}^j)_{j \in J} \right) \right).
\end{align*}

\end{proof}
We can apply Lemma \ref{lemme_technique_algebre_lineaire} to  $g_1(X), \cdots , g_m(X)$. Recall that, by  (\ref{mj}),
\begin{align*} 
    \dim  \left( Vect( g(X_1) - g(X_n) ,\cdots, g(X_{n-1}) - g(X_n) )\right) = \dim  \left( Vect( (N_j(X))_{1 \leq j \leq m} )\right)
\end{align*}
where, for $j \in \llbracket 1,m \rrbracket$,
\begin{align*}
  N_j(X) &= (g_j(X_1)-g_j(X_n), \cdots,g_j(X_{n-1})-g_j(X_n))^T.
\end{align*}
By the assumption of equality of dimensions we have 
\begin{align*}
    l:= \dim  \left( Vect( g_1(X),\cdots , g_m(X) )\right) &= \dim  \left( Vect( g(X_1) - g(X_n) ,\cdots, g(X_{n-1}) - g(X_n) )\right).
\end{align*}
Hence by Lemma \ref{lemme_technique_algebre_lineaire} there exists a subset  $J \subset \llbracket 1,m \rrbracket$ of size $l$ such that 
\begin{align*}
    l = \dim \left( Vect \left( g_1(X),...,g_m(X) \right) \right) = \dim \left( Vect \left( (g_j(X))_{j \in J} \right) \right) = \dim \left( Vect \left( (N_j(X))_{j \in J} \right) \right) 
\end{align*}
Denote $\widetilde{g} = (g_j)_{j \in J} $ and notice that
\begin{align*}
    \left\{ Q \in \mathcal{P}(\mathcal{Z}) , \  \  Qg  = 0 \right\} = \left\{ Q \in \mathcal{P}(\mathcal{Z}) , \  \  Q \widetilde{g}  = 0 \right\}.
\end{align*}
So, we can select only the constraints $\widetilde{g}$. For simplicity, in what follows we denote $g = (g_j)_{j \in J}$. 
Define the function 
\begin{align*}
    f : \mathcal{S} &\to \R^l \\
        q &\mapsto  \sum_{i=1}^n q_i g(X_i).
\end{align*}
Let prove that $f$ is a submersion. Set $\gamma := f \circ \pi^{-1} : U \to \R^l$ defined for all $q \in U$ 
\begin{align*}
    \gamma(q) = \sum_{i=1}^{n-1} q_i g(X_i) + \left(1 - \sum_{i=1}^{n-1} q_i \right) g(X_n).
\end{align*}
So $\gamma$ is  differentiable and for all $q \in U$ 
\begin{align*}
    D\gamma(q) = (g(X_1)-g(X_n), \cdots , g(X_{n-1})-g(X_n)).
\end{align*}
Since 
\begin{align*}
    \text{rk}\left( D\gamma(q)  \right) &= \dim\left( Vect( g(X_1) - g(X_n) ,\cdots, g(X_{n-1}) - g(X_n) )\right) = l.
\end{align*}
We deduce that  $D\gamma(q)$ is surjective. Thus $f$ is a submersion and $\mathcal{E}$ is a $(n-1-l)$-dimensional submanifold. We can conclude that  $\mathcal{P}^I \left( \mathcal{Z} \right) = \psi^{-1}\left(\mathcal{E} \right)$ is $(n-1-l)$-dimensional submanifold because $\psi^{-1}$ is an embedding. 

\end{proof}

\subsection{Existence of a projection} \label{sous_section_projection}
Recall the projection theorem in information geometry.
\begin{theorem} \label{proj1}
Consider a dually flat manifold $S$ and denote $D$ the canonical divergence of $S$. Let $p \in S$ and $M$ be a submanifold of $S$ which is  $\nabla^*$-autoparallel. Then a necessary and sufficient condition for a point $q \in M$ to satisfy
\begin{align}
    D(p ||q) = \min_{r \in M} D(p||r)
\end{align}
is that the $\nabla$-geodesic connecting  $p$ to $q$ is orthogonal to $M$ in $q$. The point $q$ is called  $\nabla$-projection of p on $M$. Likewise if $M$ is $\nabla$-autoparallel then a necessary and sufficient condition for a point $q \in M$ to satisfy
\begin{align}
    D^*(p ||q) = \min_{r \in M} D^*(p||r)
\end{align}
is that the $\nabla^*$-geodesic connecting $p$ to $q$ is orthogonal to $M$ in $q$ and the point $q$ is called the $\nabla^*$-projection of p on $M$. 
\end{theorem}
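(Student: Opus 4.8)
The plan is to derive the characterization from two classical ingredients of dually flat geometry: the generalized Pythagorean theorem and a first-variation formula for the canonical divergence. Throughout I would work in the pair of dual affine coordinate systems attached to $S$, namely $\theta = (\theta^i)$ adapted to $\nabla$ and $\eta = (\eta_i)$ adapted to $\nabla^*$, with Legendre-conjugate potentials $\psi$ and $\phi$ satisfying $\eta_i = \partial_{\theta^i}\psi$ and $\theta^i = \partial_{\eta_i}\phi$. In these coordinates the canonical divergence reads $D(p\|r) = \psi(\theta(p)) + \phi(\eta(r)) - \sum_i \theta^i(p)\,\eta_i(r)$, one has $D^*(p\|r) = D(r\|p)$, and the metric satisfies the biorthogonality relation $g(\partial_{\theta^i},\partial_{\eta_j}) = \delta^j_i$. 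I would also recall the generalized Pythagorean theorem: if the $\nabla$-geodesic from $p$ to $q$ and the $\nabla^*$-geodesic from $q$ to $r$ meet $g$-orthogonally at $q$, then $D(p\|r) = D(p\|q) + D(q\|r)$.

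First I would compute the first variation of $r \mapsto D(p\|r)$ along a smooth curve $\gamma$ in $M$ with $\gamma(0) = q$. Differentiating the formula for $D$ in the $\eta$-coordinates gives $\partial_{\eta_i} D(p\|r) = \theta^i(r) - \theta^i(p)$, hence $\frac{d}{dt}D(p\|\gamma(t))\big|_{t=0} = \sum_i (\theta^i(q) - \theta^i(p))\,\dot\eta_i(0)$. Since a $\nabla$-geodesic is an affine segment in the $\theta$-coordinates, the velocity at $q$ of the $\nabla$-geodesic from $p$ to $q$ has $\theta$-components $\theta^i(q) - \theta^i(p)$; by the biorthogonality relation the sum above equals $g\big(\dot\gamma(0),\, \text{velocity of that geodesic at } q\big)$. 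Thus $q$ is a critical point of $D(p\|\cdot)$ restricted to $M$ if and only if the $\nabla$-geodesic joining $p$ to $q$ is $g$-orthogonal to $T_qM$; in particular any minimizer enjoys this orthogonality property.

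Conversely, assume the $\nabla$-geodesic from $p$ to $q$ is orthogonal to $M$ at $q$ and pick any $r \in M$. At this point $\nabla^*$-autoparallelism enters: it forces the $\nabla^*$-geodesic joining $q$ and $r$ to lie entirely in $M$ (it is an affine segment in the $\eta$-coordinates with endpoints in the affine piece $M$), so its velocity at $q$ lies in $T_qM$ and is therefore $g$-orthogonal to the $\nabla$-geodesic from $p$ to $q$. The generalized Pythagorean theorem then yields $D(p\|r) = D(p\|q) + D(q\|r) \geq D(p\|q)$, with equality precisely when $D(q\|r) = 0$, i.e. when $r = q$; hence $q$ is the unique minimizer. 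Together with the previous paragraph this establishes the necessary-and-sufficient statement. The dual assertion, with $D^*$ and $M$ assumed $\nabla$-autoparallel, follows by interchanging the roles of $(\nabla,\theta,\psi)$ and $(\nabla^*,\eta,\phi)$, since the dually flat structure and the generalized Pythagorean theorem are symmetric under this exchange.

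The main obstacle is bookkeeping rather than conceptual: in the first-variation step one must track which coordinate chart each tangent vector is represented in so that the pairing $g(\partial_{\theta^i},\partial_{\eta_j}) = \delta^j_i$ is applied to the correct bases, and one must check that $\nabla^*$-autoparallelism is exactly the hypothesis guaranteeing that the competing $\nabla^*$-geodesics stay inside $M$, so that the Pythagorean theorem applies to every $r \in M$ and not merely infinitesimally. The strict positivity and nondegeneracy of the canonical divergence is what upgrades the critical point to a strict global minimum and yields uniqueness.
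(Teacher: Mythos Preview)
Your argument is correct and is precisely the standard proof of the projection theorem in dually flat geometry: the first-variation computation in dual affine coordinates identifies critical points with orthogonality of the $\nabla$-geodesic, and $\nabla^*$-autoparallelism of $M$ makes the generalized Pythagorean relation applicable to every competitor $r\in M$, upgrading the critical point to a strict global minimizer.

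There is, however, nothing to compare against: the paper does not supply its own proof of this theorem. It is stated as a recalled result (``Recall the projection theorem in information geometry'') and the reader is implicitly referred to \cite{amari}; the paper then moves directly to Proposition~\ref{proj2} and Corollary~\ref{theoreme geometrie}. Your proof is exactly the textbook argument one finds in Amari--Nagaoka, so in effect you have reproduced the proof the paper is citing rather than omitting.
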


Moreover, it is possible to relax the autoparallel submanifold assumption.

\vspace{0.1cm}

\begin{proposition} \label{proj2}
Assume that $S$ a dually flat manifold and denote $D$ the canonical divergence of $S$. Let $p \in S$ and $M$ be a submanifold of $S$. A necessary and sufficient condition for a point $q$ to be a stationary point  of the function $D(p || \cdot) : r \mapsto D(p||r)$  restraint to $M$ (resp. $D^*(p || \cdot) : r \mapsto D^*(p||r)$) is that the $\nabla$-geodesic (resp. $\nabla^*$-geodesic) connecting  $p$ to $q $ is orthogonal to $M$ in $q$.

\end{proposition}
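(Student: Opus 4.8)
The plan is to reduce this to the standard projection theorem (Theorem \ref{proj1}) by a local argument, exploiting the fact that being a stationary point and the orthogonality of the connecting geodesic are both \emph{pointwise} conditions at $q$, so the autoparallel hypothesis in Theorem \ref{proj1} is only needed globally, not locally. Concretely, fix $q \in M$ and work in a dual pair of affine coordinate systems $(\theta, \eta)$ for the dually flat structure on $S$, where $\eta = \eta(p)$ and $\nabla$-geodesics are the $\theta$-affine lines while $\nabla^*$-geodesics are the $\eta$-affine lines. Recall the fundamental identity for the canonical divergence, $D(p\|r) = \psi(\theta(p)) + \varphi(\eta(r)) - \theta(p)^{T}\eta(r)$ (with $\psi,\varphi$ the conjugate potentials), and its gradient with respect to $\eta(r)$, namely $\partial_{\eta(r)} D(p\|r) = \theta(r) - \theta(p)$.

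First I would write the first-order stationarity condition for $D(p\|\cdot)|_{M}$ at $q$: a point $q$ is stationary iff the differential of $r \mapsto D(p\|r)$ annihilates every tangent vector $v \in T_q M$, i.e. $\langle \theta(q) - \theta(p), d\eta(v)\rangle = 0$ for all $v \in T_q M$. Next I would identify the left-hand side with the inner product (in the Fisher metric) of the tangent vector at $q$ of the $\nabla$-geodesic from $p$ to $q$ with $v$: indeed that geodesic is the $\theta$-line segment, its velocity at $q$ is represented in $\theta$-coordinates by $\theta(q) - \theta(p)$ (up to reparametrisation), and the duality of the coordinate systems $g_{F}(\partial_{\theta_i}, \partial_{\eta^j}) = \delta_i^j$ turns the pairing $\langle \theta(q)-\theta(p), d\eta(v)\rangle$ exactly into $g_F$ evaluated on the geodesic velocity and $v$. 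Hence stationarity of $D(p\|\cdot)|_M$ at $q$ is equivalent to the $\nabla$-geodesic from $p$ to $q$ being $g_F$-orthogonal to $T_q M$, which is the claim. The statement for $D^{*}$ follows verbatim after exchanging the roles of $\theta$ and $\eta$ (equivalently, of $\nabla$ and $\nabla^{*}$), since $D^{*}(p\|r) = D(r\|p)$ and the dually flat structure is symmetric under this exchange.

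I expect the main obstacle to be purely expository rather than mathematical: making precise, without dragging in the full apparatus of \cite{amari}, the identification of the velocity vector of the $\nabla$-geodesic at its endpoint with the coordinate difference $\theta(q)-\theta(p)$, and checking that the bilinear pairing appearing in the stationarity condition is genuinely the Fisher inner product and not merely proportional to it. One should also take a moment to note that no completeness or autoparallelism of $M$ is used anywhere — only that $M$ is an immersed submanifold so that $T_q M$ makes sense and the restriction $D(p\|\cdot)|_M$ is a smooth function whose critical points are characterised by the vanishing of $d(D(p\|\cdot))$ on $T_q M$; this is exactly why the hypothesis of Theorem \ref{proj1} can be dropped here. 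A cleaner alternative, which I would mention, is to invoke the generalised Pythagorean relation directly: for the $\nabla$-geodesic from $p$ to $q$ and any curve in $M$ through $q$, a second-order Taylor expansion of $D$ along the curve has its first-order term equal to the geodesic-velocity/curve-velocity pairing, giving the equivalence immediately.
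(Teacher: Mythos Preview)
The paper does not actually prove this proposition; it is stated without proof as a standard result from information geometry, with the reader referred to Amari--Nagaoka \cite{amari}. So there is no paper argument to compare against.

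On its own merits your argument is correct and is essentially the textbook proof. The computation $\partial_{\eta(r)} D(p\|r) = \nabla\varphi(\eta(r)) - \theta(p) = \theta(r) - \theta(p)$ via Legendre duality, together with the biorthogonality $g_F(\partial_{\theta_i},\partial_{\eta^j}) = \delta_i^j$, does identify the first-order condition $\langle \theta(q)-\theta(p), d\eta(v)\rangle = 0$ for all $v\in T_qM$ with $g_F$-orthogonality of the $\nabla$-geodesic velocity at $q$ to $T_qM$. The dual statement follows by symmetry as you say.

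Two small points. First, your opening sentence promises to ``reduce this to Theorem \ref{proj1} by a local argument,'' but you never actually invoke Theorem \ref{proj1}; you give a direct coordinate computation. This is fine --- indeed Proposition \ref{proj2} is logically \emph{prior} to Theorem \ref{proj1} (one obtains the latter from the former by adding convexity of $D(p\|\cdot)$ along $\nabla^*$-geodesics in a $\nabla^*$-autoparallel $M$) --- so I would simply drop that sentence and say you compute directly in dual affine coordinates. Second, the phrase ``where $\eta = \eta(p)$'' appears to be a stray fragment; you mean that $(\theta,\eta)$ are the dual affine coordinate systems on $S$, nothing more.
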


We next apply Theorem \ref{proj1} and Proposition \ref{proj2} to define a projection $\mathbb{Q}_n^I$ of $\mathbb{P}_n$ on $\mathcal{P}^I(\mathcal{Z})$. 

\begin{corollary} \label{theoreme geometrie}
Assume that  $\mathcal{P}^I \left(\mathcal{Z} \right)$ is a submanifold of $\mathcal{P}(\mathcal{Z})$. Then 
\begin{itemize}
    \item[$\bullet$] If $\mathcal{P}^I \left(\mathcal{Z} \right)$ is a submanifold  $\nabla^{(-1)}$-autoparallel then $\mathbb{Q}_n^I$ is the $\nabla^{(1)}$-projection of $\ \mathbb{P}_n$ on $\mathcal{P}^I \left(\mathcal{Z} \right)$ that is 
    \begin{align*}
        \mathbb{Q}_n^I &\in arg \min_{Q \in \mathcal{P}^I(\mathcal{Z})} \ KL^*\left(\mathbb{P}_n || Q \right) = arg \min_{Q \in \mathcal{P}^I(\mathcal{Z})} \ KL\left(Q || \mathbb{P}_n \right).
    \end{align*}
    \item[$\bullet$] If $\mathcal{P}^I \left(\mathcal{Z} \right)$ is  a submanifold $\nabla^{(1)}$-autoparallel then $\mathbb{Q}_n^I$ is the $\nabla^{(-1)}$-projection of \  $\mathbb{P}_n$ on $\mathcal{P}^I \left(\mathcal{Z} \right)$ 
    \begin{align*}
        \mathbb{Q}_n^I &\in arg \min_{Q \in \mathcal{P}^I(\mathcal{Z})} \ KL\left(\mathbb{P}_n || Q \right).
    \end{align*}
    \item[$\bullet$] In the case where  $\mathcal{P}^I \left(\mathcal{Z} \right)$ is  autoparallel to neither of these two connections then  $ \ \mathbb{Q}_n^I$ is a stationary point of one of these two maps 
    \begin{align*}
        Q &\longmapsto KL\left(\mathbb{P}_n||Q \right), \\ 
        Q &\longmapsto KL^* \left(\mathbb{P}_n||Q \right).
    \end{align*}
\end{itemize}
\end{corollary}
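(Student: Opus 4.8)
The plan is to derive the corollary as an essentially immediate specialisation of Theorem~\ref{proj1} and Proposition~\ref{proj2} to $S=\mathcal{P}(\mathcal{Z})$, $p=\mathbb{P}_n$ and $M=\mathcal{P}^I(\mathcal{Z})$, once the ambient structure is recalled. First I would collect the prerequisites. Being a finite mixture model, $\mathcal{P}(\mathcal{Z})$ is a dually flat manifold $(\mathcal{P}(\mathcal{Z}),g_F,\nabla^{(1)},\nabla^{(-1)})$ with the two connections mutually dual for $g_F$, and its canonical divergence is $KL$, with dual canonical divergence $KL^{*}(p\|q):=KL(q\|p)$; this is exactly the fact recalled from \cite{KL_geometry}, and it can be re-derived by Legendre transforming the negative--entropy potential in the exponential and mixture affine coordinates of $\mathcal{P}(\mathcal{Z})$. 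Moreover $\mathbb{P}_n\in\mathcal{P}(\mathcal{Z})$, and by Proposition~\ref{proposition_egalite_dimension} (in the scalar case Proposition~\ref{proposition_sous_variete_cas_particulier}) the standing hypothesis of the corollary is precisely what makes $\mathcal{P}^I(\mathcal{Z})$ a submanifold of $\mathcal{P}(\mathcal{Z})$, which is all the input Theorem~\ref{proj1} and Proposition~\ref{proj2} require.

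Then I would dispatch the three cases by matching orientations. Identify $\nabla:=\nabla^{(-1)}$ and $\nabla^{*}:=\nabla^{(1)}$ (mutually dual), and $D:=KL$, $D^{*}:=KL^{*}$, in the notation of Theorem~\ref{proj1}. If $\mathcal{P}^I(\mathcal{Z})$ is $\nabla^{(-1)}$-autoparallel, i.e.\ $\nabla$-autoparallel, the second part of Theorem~\ref{proj1} applies: a point $q\in\mathcal{P}^I(\mathcal{Z})$ minimises $D^{*}(\mathbb{P}_n\|\cdot)=KL^{*}(\mathbb{P}_n\|\cdot)$ over $\mathcal{P}^I(\mathcal{Z})$ if and only if the $\nabla^{*}=\nabla^{(1)}$-geodesic from $\mathbb{P}_n$ to $q$ is orthogonal to $\mathcal{P}^I(\mathcal{Z})$ at $q$, i.e.\ if and only if $q$ is the $\nabla^{(1)}$-projection; declaring $\mathbb{Q}_n^I$ to be that point and using $KL^{*}(\mathbb{P}_n\|Q)=KL(Q\|\mathbb{P}_n)$ gives the first bullet. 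Symmetrically, if $\mathcal{P}^I(\mathcal{Z})$ is $\nabla^{(1)}$-autoparallel, i.e.\ $\nabla^{*}$-autoparallel, the first part of Theorem~\ref{proj1} identifies the minimiser of $D(\mathbb{P}_n\|\cdot)=KL(\mathbb{P}_n\|\cdot)$ over $\mathcal{P}^I(\mathcal{Z})$ with the $\nabla=\nabla^{(-1)}$-projection, which is the second bullet. In the remaining case no autoparallelism is assumed, so Theorem~\ref{proj1} is unavailable; but Proposition~\ref{proj2} needs no flatness of the submanifold and states that $q\in\mathcal{P}^I(\mathcal{Z})$ is a stationary point of $KL(\mathbb{P}_n\|\cdot)$ (resp.\ $KL^{*}(\mathbb{P}_n\|\cdot)$) restricted to $\mathcal{P}^I(\mathcal{Z})$ if and only if the $\nabla$-geodesic (resp.\ $\nabla^{*}$-geodesic) from $\mathbb{P}_n$ to $q$ is orthogonal to $\mathcal{P}^I(\mathcal{Z})$ at $q$; taking $\mathbb{Q}_n^I$ to be such a stationary point yields the third bullet.

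I expect the only genuinely delicate point to be the bookkeeping of orientations: one has to pin down, consistently with \cite{amari} and \cite{KL_geometry}, which of $\nabla^{(1)},\nabla^{(-1)}$ is the exponential connection and which is the mixture one, and correspondingly whether the canonical divergence $D(p\|q)$ equals $KL(p\|q)$ or $KL(q\|p)$; only the matching used above makes the three bullets come out as stated, and a sign slip would interchange the two projection measures. Everything else --- the submanifold structure of Proposition~\ref{proposition_egalite_dimension}, the two projection theorems, and the tautology $KL^{*}(\mathbb{P}_n\|Q)=KL(Q\|\mathbb{P}_n)$ --- is already available. I would also stress, as the surrounding text does, that the corollary delivers only the geometric and variational characterisation of $\mathbb{Q}_n^I$; the existence and uniqueness of the minimiser, hence the well-definedness of the projection itself, is the business of Section~\ref{mesure_projection}.
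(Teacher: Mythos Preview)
Your proposal is correct and matches the paper's intent: the paper provides no explicit proof of this corollary, treating it as an immediate specialisation of Theorem~\ref{proj1} and Proposition~\ref{proj2} to $S=\mathcal{P}(\mathcal{Z})$, $p=\mathbb{P}_n$, $M=\mathcal{P}^I(\mathcal{Z})$ with $D=KL$, exactly as you outline. Your identification $\nabla=\nabla^{(-1)}$, $\nabla^{*}=\nabla^{(1)}$, $D=KL$, $D^{*}=KL^{*}$ is the right one and your case analysis lines up correctly with the two parts of Theorem~\ref{proj1} and with Proposition~\ref{proj2} for the third bullet.
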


\begin{remark}
Corollary \ref{theoreme geometrie} does not determine a unique informed empirical measure by projection and moreover it is generally not easy to check if the submanifold  $\mathcal{P}^I(\mathcal{Z})$ is autoparallel.
\end{remark}

\section{Two measure projections}\label{mesure_projection}  

Assume that $Pg$ belongs to the convex hull of $\left\{g(X_1), \cdots, g(X_n) \right\}$ and the assumption (\ref{hypothèse_utilisée}) is verified then by Proposition \ref{proposition_egalite_dimension}  $ \ \mathcal{P}^I\left( \mathcal{Z} \right)$ is a submanifold. By Theorem \ref{theoreme geometrie} we were able to define a projection of $\mathbb{P}_n$ on $\mathcal{P}^I(\mathcal{Z})$. The next step is to study these two optimization problems  
\begin{align}
    &arg \min_{Q \in \mathcal{P}^I\left( \mathcal{Z} \right)} KL(\mathbb{P}_n || Q),  \label{premier} \\
    &arg \min_{Q \in \mathcal{P}^I\left( \mathcal{Z} \right)} KL(Q || \mathbb{P}_n).  \label{second}
\end{align}
Remark that 
\begin{align*}
     KL(\mathbb{P}_n ||Q) &= \sum_{i=1}^n  \frac{1}{n} \log\left( \frac{1}{nq_i} \right) = -\log(n) - \frac{1}{n} \sum_{i=1}^n \log q_i, \\
     KL(Q || \mathbb{P}_n) &= \sum_{i=1}^n q_i \log (nq_i) = \log n + \sum_{i=1}^n q_i \log q_i .                 
\end{align*}
Therefore 
\begin{align*}
    arg \min_{Q \in \mathcal{P}^I} KL(\mathbb{P}_n || Q) &= arg \max_{Q \in \mathcal{P}^I\left( \mathcal{Z} \right)} \sum_{i=1}^n \log q_i = arg \max_{Q \in \mathcal{P}^I\left( \mathcal{Z} \right)} \prod_{i=1}^n q_i, \\
    arg \min_{Q \in \mathcal{P}^I\left( \mathcal{Z} \right)} KL(Q || \mathbb{P}_n) &=  arg \min_{Q \in \mathcal{P}^I\left( \mathcal{Z} \right)} \sum_{i=1}^n q_i \log q_i.
\end{align*}

\subsection{First optimization problem}

The first optimization problem (\ref{premier}) is called empirical likelihood and has been studied by A.B. Owen -- refer to \cite{owen}, \cite{owen_book}, \cite{qin} and \cite{zhang_vraisemblance}.  More precisely, the first optimisation problem is the following 
$$ \begin{cases}
    \max_{\textbf{q}} \sum_{i=1}^n \log(q_i) \\[0,1cm]
    \sum_{i=1}^n q_i = 1, \\[0,1cm]
    \forall i \in \llbracket 1,n \rrbracket \ , \  q_i > 0, \\[0,1cm]
    n \sum_{i=1}^n q_i (g(X_i) - Pg) = 0.
\end{cases}$$
Denote $\mathcal{C}_n$ the constraints set 
\begin{align*}
    \mathcal{C}_n = \left\{ q \in [0,1]^n, \ \sum_{i=1}^n q_i = 1 , \ n\sum_{i=1}^n q_i (g(X_i) - Pg) = 0 \ \right\}.
\end{align*}
Moreover denote also 
\begin{align*}
    \ M_k(X) &= (g_k(X_1), \cdots , g_k(X_n))^T, \ k \in \llbracket 1,m \rrbracket ,\\
  f(q) &= \sum_{i=1}^n \log q_i, \ q \in ]0,1[^n.
\end{align*}
The following theorem ensures that there is a unique solution to this problem.
\begin{theorem} \label{theoreme_premier_opt}
Assume that  $Pg$ belongs to the convex hull of  $\left\{g(X_1), \cdots, g(X_n) \right\}$ and 
\begin{align}
    \dim Vect\left( (1, \cdots,1)^T, M_1(X), \cdots , M_m(X) \right) = m+1 \label{dimension_opt_1}.
\end{align}
Then the first optimization problem has a unique solution $q^* = (q_1^*, \cdots, q_n^*) \in \mathcal{C}_n$. Moreover for all  $i \in \llbracket 1,n \rrbracket$  
\begin{align*}
    q_i^* &> 0, \\
    q_i^* &= \frac{1}{n} \frac{1}{1 + \langle \lambda^* , g(X_i) - Pg \rangle}
\end{align*}
for a unique $\lambda^* \in \bigcap_{i=1}^n \left\{ \lambda \in \R^m, \ 1 + \langle \lambda , g(X_i) - Pg \rangle \geq \frac{1}{n}  \right\} $. 
\end{theorem}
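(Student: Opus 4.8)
The plan is to prove existence and uniqueness separately, then derive the closed form via Lagrange multipliers. For \emph{existence}, first observe that the constraint set $\mathcal{C}_n$ is a nonempty (since $Pg$ lies in the convex hull of the $g(X_i)$), closed, bounded, hence compact subset of $\R^n$; it is also convex. The objective $f(q)=\sum_{i=1}^n\log q_i$ extends to a concave, upper semicontinuous function on $[0,1]^n$ with values in $[-\infty,\infty)$, taking the value $-\infty$ exactly on the boundary faces where some $q_i=0$. Since $Pg$ is in the convex hull, there is at least one point $q^{(0)}\in\mathcal{C}_n$ with all coordinates strictly positive (a strict convex combination works because of the dimension hypothesis (\ref{dimension_opt_1}), which prevents $\mathcal{C}_n$ from being contained in a coordinate hyperplane — I would spell this out), so $\sup_{\mathcal{C}_n}f>-\infty$. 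Upper semicontinuity on the compact set $\mathcal{C}_n$ then gives a maximizer $q^*$, and $f(q^*)>-\infty$ forces $q_i^*>0$ for all $i$.

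For \emph{uniqueness}, I would use strict concavity: $f$ is strictly concave on $]0,1[^n$ (its Hessian is the negative definite diagonal matrix $-\mathrm{diag}(q_i^{-2})$), and $\mathcal{C}_n$ is convex, so a maximizer of $f$ on $\mathcal{C}_n$ is unique. (Any maximizer has all coordinates positive by the existence argument, so we are genuinely in the region of strict concavity.)

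For the \emph{closed form}, since $q^*$ lies in the open set $]0,1[^n$ and (\ref{dimension_opt_1}) guarantees that the constraint map $q\mapsto(\sum_i q_i,\ \sum_i q_i(g(X_i)-Pg))$ has surjective differential at $q^*$ (its Jacobian has rows spanning a space of dimension $m+1$, again by (\ref{dimension_opt_1})), the Karush–Kuhn–Tucker / Lagrange conditions apply with no active inequality constraints: there exist $\mu\in\R$ and $\eta\in\R^m$ with
\begin{align*}
\frac{1}{q_i^*} = \mu + \langle \eta, g(X_i)-Pg\rangle,\qquad i\in\llbracket 1,n\rrbracket.
\end{align*}
Multiplying by $q_i^*$ and summing over $i$, using $\sum q_i^*=1$ and $\sum q_i^*(g(X_i)-Pg)=0$, gives $n=\mu$. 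Hence $q_i^* = \tfrac1n\big(1+\langle\lambda^*,g(X_i)-Pg\rangle\big)^{-1}$ with $\lambda^*=\eta/n$. Positivity of $q_i^*$ together with $q_i^*\le 1$ is exactly the statement $1+\langle\lambda^*,g(X_i)-Pg\rangle\ge \tfrac1n$, so $\lambda^*$ lies in the stated intersection. Uniqueness of $\lambda^*$ follows from uniqueness of $q^*$ together with the hypothesis (\ref{dimension_opt_1}): if $\lambda_1,\lambda_2$ both produced the same weights, then $\langle\lambda_1-\lambda_2,g(X_i)-Pg\rangle=0$ for all $i$, i.e.\ $M_1(X),\dots,M_m(X)$ and $(1,\dots,1)^T$ would satisfy a nontrivial linear relation, contradicting (\ref{dimension_opt_1}).

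The main obstacle is the careful bookkeeping around the boundary: one must argue that the maximizer cannot escape to a face where some weight vanishes (handled by $f=-\infty$ there versus $f>-\infty$ at an interior feasible point) and that the interior feasible point exists, which is where the convex-hull hypothesis and the full-rank hypothesis (\ref{dimension_opt_1}) both get used. Everything after that — Lagrange multipliers, the algebra to identify $\mu=n$, and deducing the membership of $\lambda^*$ — is routine.
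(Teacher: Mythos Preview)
The paper does not actually prove this theorem; it only says that it ``can be proved by using a similar approach that in \cite{owen}''. Your proposal is exactly the standard Owen-style argument (compactness and upper semicontinuity for existence, strict concavity for uniqueness, Lagrange multipliers for the closed form and the identification $\mu=n$), so it coincides with what the paper has in mind.

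One point in your sketch is not correct, however. You claim that the convex-hull hypothesis together with the rank condition (\ref{dimension_opt_1}) forces the existence of a feasible $q^{(0)}\in\mathcal{C}_n$ with \emph{all} coordinates strictly positive, because (\ref{dimension_opt_1}) ``prevents $\mathcal{C}_n$ from being contained in a coordinate hyperplane''. It does not. Condition (\ref{dimension_opt_1}) is purely a rank condition on the constraint gradients; it says nothing about where $Pg$ sits inside the convex hull. A concrete counterexample: $n=2$, $m=1$, $g(X_1)=0$, $g(X_2)=1$, $Pg=0$. Then $(1,1)^T$ and $M_1(X)=(0,1)^T$ are independent, so (\ref{dimension_opt_1}) holds, and $Pg$ lies in the closed convex hull $[0,1]$; yet the unique feasible point is $(1,0)$, so $\mathcal{C}_n$ is contained in the hyperplane $\{q_2=0\}$ and no interior feasible point exists. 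In Owen's treatment the standing assumption is that $Pg$ lies in the \emph{interior} of the convex hull of $\{g(X_1),\dots,g(X_n)\}$, and that is what actually delivers the strictly positive feasible point (and hence $q_i^*>0$). You should either read the hypothesis here in that stronger sense, as the reference \cite{owen} does and as the paper presumably intends, or flag that the statement as written needs this strengthening. With that adjustment your argument is complete and correct; the remaining steps (the Lagrange computation, the algebra giving $\mu=n$, the inequality $1+\langle\lambda^*,g(X_i)-Pg\rangle\ge 1/n$ from $q_i^*\le 1$, and uniqueness of $\lambda^*$ via (\ref{dimension_opt_1})) are all fine.
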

\begin{remark}
The assumption (\ref{dimension_opt_1}) is not restrictive since if it is not verified that means that some constraints are redundant. 
\end{remark}

Assume that $Pg = 0$ and write $\hat{\lambda}_n = \lambda^*$ to state an asymptotic result.

\begin{theorem} \label{theoreme asymptotique premier problème}
Assume that  $\mathbb{E}\Vert g(X) \Vert^2 < +\infty$  and $\Sigma = Pgg^T$ is positive definite. Moreover, we suppose that the assumptions of the theorem \ref{theoreme_premier_opt} are satisfied. Set $\Sigma_n = P_n g g^T$.  Then we have
\begin{itemize}
    \item[$\bullet$] $\max_{1\leq i \leq n} |< \hat{\lambda}_n,g(X_i)> | = o_p(1)$.
    \item[$\bullet$] $\hat{\lambda}_n = \Sigma_n^{-1} \mathbb{P}_n g + o_p\left(\frac{1}{\sqrt{n}} \right) $.
    \item[$\bullet$] $\sqrt{n}\hat{\lambda}_n \Rightarrow \mathcal{N}(0,\Sigma^{-1})$.
\end{itemize}
\end{theorem}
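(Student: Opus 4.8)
The plan is to analyze the Lagrange/saddle-point characterization from Theorem \ref{theoreme_premier_opt}, where $q_i^* = \tfrac{1}{n}\,(1+\langle\hat\lambda_n, g(X_i)\rangle)^{-1}$ with $Pg=0$, and to exploit the defining equation obtained by imposing the constraint $\sum_i q_i^* g(X_i) = 0$, namely
\begin{align*}
 0 = \frac{1}{n}\sum_{i=1}^n \frac{g(X_i)}{1+\langle\hat\lambda_n, g(X_i)\rangle}.
\end{align*}
First I would prove the uniform smallness $\max_{1\le i\le n}|\langle\hat\lambda_n, g(X_i)\rangle| = o_p(1)$. The standard Owen-type argument goes as follows: write $\hat\lambda_n = \rho_n\theta_n$ with $\|\theta_n\|=1$ and $\rho_n\ge 0$; dot the defining equation with $\theta_n$ and use $\tfrac{x}{1+x}= x - \tfrac{x^2}{1+x}$ together with the positivity constraint $1+\langle\hat\lambda_n,g(X_i)\rangle\ge 1/n$ (so each denominator is bounded below) to get
\begin{align*}
 \rho_n\,\theta_n^T\Sigma_n\theta_n \le \left(1+\rho_n \max_i|\langle\theta_n,g(X_i)\rangle|\right)\,\|\mathbb{P}_n g\|.
\end{align*}
Then $\mathbb{P}_n g = O_p(n^{-1/2})$ by the CLT, $\Sigma_n\to\Sigma$ a.s. by the SLLN (so $\theta_n^T\Sigma_n\theta_n$ is eventually bounded below by $\lambda_{\min}(\Sigma)/2>0$), and $\max_i\|g(X_i)\| = o_p(\sqrt n)$ because $\E\|g(X)\|^2<\infty$ (a Borel–Cantelli / maximal-inequality estimate: $\max_{i\le n}\|g(X_i)\|^2/n \to 0$ in probability). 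Rearranging yields $\rho_n = O_p(n^{-1/2})$, hence $\max_i|\langle\hat\lambda_n,g(X_i)\rangle|\le \rho_n\max_i\|g(X_i)\| = O_p(n^{-1/2})\,o_p(\sqrt n) = o_p(1)$.

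Next, with uniform smallness in hand, I would Taylor-expand the defining equation to first order: since $\tfrac{1}{1+x} = 1 - x + \tfrac{x^2}{1+x}$ and the denominators are bounded below,
\begin{align*}
 0 = \mathbb{P}_n g - \Sigma_n\hat\lambda_n + r_n, \qquad \|r_n\| \le C\,\Big(\max_i|\langle\hat\lambda_n,g(X_i)\rangle|\Big)\cdot \frac{1}{n}\sum_{i=1}^n \|g(X_i)\|\,|\langle\hat\lambda_n,g(X_i)\rangle|.
\end{align*}
The remainder is bounded by $o_p(1)\cdot \|\hat\lambda_n\|\cdot \mathbb{P}_n\|g\|^2 = o_p(1)\cdot O_p(n^{-1/2})\cdot O_p(1) = o_p(n^{-1/2})$. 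Inverting $\Sigma_n$ (invertible for large $n$ since $\Sigma_n\to\Sigma\succ0$) gives $\hat\lambda_n = \Sigma_n^{-1}\mathbb{P}_n g + o_p(n^{-1/2})$, the second bullet. For the third bullet, replace $\Sigma_n^{-1}$ by $\Sigma^{-1}$ at the cost of $(\Sigma_n^{-1}-\Sigma^{-1})\mathbb{P}_n g = o_p(1)\cdot O_p(n^{-1/2}) = o_p(n^{-1/2})$, so $\sqrt n\,\hat\lambda_n = \Sigma^{-1}\sqrt n\,\mathbb{P}_n g + o_p(1)$; since $\sqrt n\,\mathbb{P}_n g \Rightarrow \mathcal{N}(0,\Sigma)$ by the multivariate CLT, Slutsky gives $\sqrt n\,\hat\lambda_n \Rightarrow \Sigma^{-1}\mathcal{N}(0,\Sigma) = \mathcal{N}(0,\Sigma^{-1})$.

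The main obstacle is the first bullet — establishing $\max_i|\langle\hat\lambda_n,g(X_i)\rangle| = o_p(1)$ — because it is the only genuinely nonlinear step and everything downstream is a routine linearization. The delicate points there are: justifying that $\rho_n$ cannot blow up (one must use the lower bound $1+\langle\hat\lambda_n,g(X_i)\rangle\ge 1/n$ from Theorem \ref{theoreme_premier_opt} to control the $\tfrac{x^2}{1+x}$ terms rather than assuming $\langle\hat\lambda_n,g(X_i)\rangle$ is already small), and the maximal estimate $\max_{i\le n}\|g(X_i)\| = o_p(\sqrt n)$ under only a second-moment hypothesis, which I would get from $\P(\max_{i\le n}\|g(X_i)\|>\epsilon\sqrt n)\le n\,\P(\|g(X)\|^2>\epsilon^2 n)\to 0$ by dominated convergence since $\E\|g(X)\|^2<\infty$. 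I would also note that positive definiteness of $\Sigma$ plays an essential role in bounding $\theta_n^T\Sigma_n\theta_n$ away from zero uniformly over the unit sphere, for which the a.s. convergence $\Sigma_n\to\Sigma$ plus compactness of the sphere suffices.
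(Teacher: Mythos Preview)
Your proposal is correct and is precisely the classical Owen (1990) argument; the paper does not actually write out a proof for this theorem but simply states that it ``can be proved by using a similar approach [as] in \cite{owen},'' which is exactly what you have sketched. The detailed proof the paper does give for the companion result (Theorem \ref{theoreme asymptotique second problème}) follows the same overall architecture---polar decomposition $\hat\lambda_n=\rho_n\theta_n$, a lower bound yielding $\rho_n=O_p(n^{-1/2})$, Owen's lemma for $\max_i\|g(X_i)\|=o_p(\sqrt n)$, then Taylor expansion and Slutsky---so your write-up matches both the reference and the paper's own template.
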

\begin{remark}
We can replace $\Sigma_n$ by the empirical variance $Var_n(g)$ of  $g$ since  $$ \Sigma_n = Var_n(g)  +  (\mathbb{P}_n g )^T \mathbb{P}_n g  = Var_n(g)  + o_p\left(\frac{1}{\sqrt{n}}\right). $$ 
\end{remark}
The last two theorems can be proved by using a similar approach that in \cite{owen}.

\subsection{Second optimization problem}

Let consider the second optimization problem (\ref{second}) 
$$ \begin{cases}
    \min_{\textbf{q}} \sum_{i=1}^n q_i \log(q_i) \\[0,1cm]
    \sum_{i=1}^n q_i = 1, \\[0,1cm]
    \forall i \in \llbracket 1,n \rrbracket \ , \  q_i > 0, \\[0,1cm]
    n \sum_{i=1}^n q_i (g(X_i) - Pg) = 0.
\end{cases}$$
Set 
\begin{align*}
 \varphi(q) &= \sum_{i=1}^n q_i \log q_i,\ q \in ]0,1[^n.
\end{align*}
The following result ensures that there is a unique solution to the second optimization problem. 
\begin{theorem} \label{solution second problème}
Assume that $Pg$ belongs to the convex hull of $\left\{g(X_1), \cdots, g(X_n) \right\}$ and 
\begin{align}
    \dim Vect\left( (1, \cdots,1)^T, M_1(X), \cdots , M_m(X) \right) = m+1 \label{dimension_opt_3}.
\end{align}
Then the second optimization problem has a unique solution $q^* = (q_1^*, \cdots, q_n^*) \in \mathcal{C}_n$. Moreover for all  $i \in [|1,n|]$  
\begin{align*}
    q_i^* &= \frac{\exp \left( \langle \lambda^* , g(X_i) - Pg \rangle \right) }{\sum_{j=1}^n \exp \left( \langle \lambda^* , g(X_j) - Pg \rangle \right)} 
\end{align*}
for a unique $\lambda^* \in \R^m $. 
\end{theorem}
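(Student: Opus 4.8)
The plan is to treat the second optimization problem as a strictly convex minimization over a compact convex set, extract the exponential form from the Lagrange (KKT) conditions at an interior minimizer, and then use the dimension hypothesis (\ref{dimension_opt_3}) to promote uniqueness of the minimizer to uniqueness of the multiplier.

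First I would settle existence and uniqueness of the minimizer. The function $t\mapsto t\log t$, extended by $0\log 0=0$, is continuous and strictly convex on $[0,1]$, so $\varphi$ is continuous and strictly convex on $\mathcal{C}_n$. Since $Pg$ lies in the convex hull of $\{g(X_1),\dots,g(X_n)\}$, the set $\mathcal{C}_n=\{q\in[0,1]^n:\sum_i q_i=1,\ \sum_i q_i g(X_i)=Pg\}$ is non-empty, and it is clearly compact and convex; hence $\varphi$ attains a minimum on $\mathcal{C}_n$ at a unique point $q^*$.

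Second, I would show that $q^*$ lies in the interior, i.e. $q_i^*>0$ for every $i$. This is the step I expect to be the main obstacle, because it is precisely where membership of $Pg$ in the convex hull must be used in the relative-interior sense: one needs a point $q^{(0)}\in\mathcal{C}_n$ with all coordinates strictly positive (equivalently, $Pg$ is a strictly positive convex combination of the $g(X_i)$). Granting such a $q^{(0)}$, consider $t\mapsto\varphi\big((1-t)q^*+t\,q^{(0)}\big)$ for $t\in[0,1]$: on a coordinate with $q_i^*=0$ the contribution is $t\,q_i^{(0)}\log(t\,q_i^{(0)})$, whose right derivative at $t=0$ equals $-\infty$, while every coordinate with $q_i^*>0$ contributes a finite derivative; thus $\varphi$ would strictly decrease along the segment, contradicting the minimality of $q^*$ unless $q^*$ has no zero coordinate. (If $Pg$ is only assumed in the convex hull in the weak sense, the stated exponential form may fail at a boundary of the hull, so the relative-interior reading is what makes the conclusion correct.)

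Third, with $q^*$ interior the only active constraints are the equality constraints, whose gradients are $(1,\dots,1)^T$ and $M_1(X),\dots,M_m(X)$; these are linearly independent by (\ref{dimension_opt_3}), so LICQ holds and the Lagrange conditions apply: there exist $\mu\in\R$ and $\lambda^*\in\R^m$ with $\log q_i^*+1=\mu+\langle\lambda^*,g(X_i)-Pg\rangle$ for all $i$. Exponentiating and eliminating $\mu$ via $\sum_i q_i^*=1$ gives $q_i^*=\exp(\langle\lambda^*,g(X_i)-Pg\rangle)\big/\sum_{j=1}^n\exp(\langle\lambda^*,g(X_j)-Pg\rangle)$. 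For uniqueness of $\lambda^*$: if $\lambda_1,\lambda_2$ both produce $q^*$, then $\langle\lambda_1-\lambda_2,g(X_i)-Pg\rangle$ is independent of $i$, so $\sum_k(\lambda_{1,k}-\lambda_{2,k})M_k(X)$ is a scalar multiple of $(1,\dots,1)^T$, and linear independence in (\ref{dimension_opt_3}) forces $\lambda_1=\lambda_2$. Alternatively, uniqueness of $\lambda^*$ — and indeed the whole construction — follows dually from the observation that $\lambda\mapsto\log\sum_{j=1}^n\exp(\langle\lambda,g(X_j)-Pg\rangle)$ is strictly convex under (\ref{dimension_opt_3}) and coercive under the (relative-interior) convex-hull hypothesis, hence has a unique minimizer, which is exactly $\lambda^*$.
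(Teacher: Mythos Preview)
Your argument is correct and broadly parallel to the paper's: both obtain the exponential form from the Lagrange conditions and invoke (\ref{dimension_opt_3}) for uniqueness. The differences are in order and care. The paper appeals to compactness of $\mathcal{C}_n$ for existence, then applies the Lagrange multiplier theorem directly---without ever checking that the minimizer avoids the boundary $\{q_i=0\}$---and only afterwards recovers uniqueness of both $q^*$ and $\lambda^*$ at once, by computing the Hessian of the dual function $F(\lambda)=-\log\sum_j\exp\langle\lambda,g(X_j)-Pg\rangle$ and showing that $\mathrm{Hess}\,F=-\mathrm{Var}_{Q_n(\lambda)}(g)$ is negative definite under (\ref{dimension_opt_3}). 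Your route is tighter in each of these respects: you get uniqueness of $q^*$ immediately from strict convexity of $\varphi$, you isolate interiority as an explicit step, and you deduce uniqueness of $\lambda^*$ by a one-line linear-algebra argument from (\ref{dimension_opt_3}) rather than a Hessian computation (your ``alternative'' dual argument is exactly the paper's). Your observation that the interiority step genuinely requires $Pg$ to lie in the \emph{relative interior} of the convex hull of $\{g(X_1),\dots,g(X_n)\}$---so that a strictly positive $q^{(0)}\in\mathcal{C}_n$ exists---is well taken; the paper's proof implicitly needs this to make sense of $\log q_i^*$ but never addresses it.
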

\begin{proof}

\textit{Existence.} Since $\mathcal{C}_n$ is compact and $\varphi$ is continuous, there exists $q^* \in \mathcal{C}_n$ such that 
\begin{align*}
    \sum_{i=1}^n q_i^* \log q_i^* = \min_{q \in \mathcal{C}_n} \sum_{i=1}^n q_i \log q_i.
\end{align*}

\textit{Uniqueness.} By the assumption  (\ref{dimension_opt_3}) and $q^*$ is a global maximum of  $\varphi$ on  $\mathcal{C}_n$, we can apply the Lagrange multiplier theorem. So there exists  $(\lambda^*,\mu) \in \R^m \times \R$ such that 
\begin{align*}
    D\varphi(q^*) = \mu(1, \cdots,1)^T + \sum_{j=1}^m \lambda^*_j \left( g_j(X_1) - Pg_j, \cdots , g_j(X_n) - Pg_j \right)^T.
\end{align*}
So for all  $i \in \llbracket 1,n \rrbracket$ 
\begin{align*}
    \log q_i^* + 1  = \mu +  \langle \lambda^* , g(X_i) - Pg \rangle.
\end{align*}
Since $\sum_{i=1}^n q^*_i =1 $, we deduce that 
\begin{align*}
    \mu &= 1 - \log \left( \sum_{i=1}^n \exp \left( \langle \lambda^* , g(X_i) - Pg \rangle \right) \right) \\
    q_i^* &= \frac{\exp \left( \langle \lambda^* , g(X_i) - Pg \rangle \right) }{\sum_{j=1}^n \exp \left( \langle \lambda^* , g(X_j) - Pg \rangle \right)}. 
\end{align*}
By injecting this expression, we get
\begin{align*}
    \varphi(q^*) &= \frac{ \sum_{i=1}^n \langle \lambda^* , g(X_i) - Pg \rangle \exp \left(  \langle \lambda^* , g(X_i) - Pg \rangle \right) }{\sum_{j=1}^n \exp \left( \langle \lambda^* , g(X_j) - Pg \rangle \right)} - \log \left( \sum_{i=1}^n \exp \left( \langle \lambda^* , g(X_i) - Pg \rangle \right) \right) \\
    &=\frac{  \langle \lambda^* , \sum_{i=1}^n (g(X_i) - Pg)  \exp \left(  \langle \lambda^* , g(X_i) - Pg \rangle \right) \rangle }{\sum_{j=1}^n \exp \left( \langle \lambda^* , g(X_j) - Pg \rangle \right)} - \log \left( \sum_{i=1}^n \exp \left( \langle \lambda^* , g(X_i) - Pg \rangle \right) \right).
\end{align*}
Since $\sum_{i=1}^n q_i^* \left( g(X_i) - Pg \right) =0 $, we have 
\begin{align*}
    \frac{   \sum_{i=1}^n (g(X_i) - Pg)  \exp \left(  \langle \lambda^* , g(X_i) - Pg \rangle \right)  }{\sum_{j=1}^n \exp \left( \langle \lambda^* , g(X_j) - Pg \rangle \right)} = 0.
\end{align*}
So 
\begin{align*}
    \varphi(q^*) = - \log \left( \sum_{i=1}^n \exp \left( \langle \lambda^* , g(X_i) - Pg \rangle \right) \right).
\end{align*}
Set the following function  $F : \R^m \to \R$ defined by  
\begin{align*}
    \forall \lambda \in \R^m, \ F(\lambda) =  - \log \left( \sum_{i=1}^n \exp \left( \langle \lambda , g(X_i) - Pg \rangle \right) \right).
\end{align*}
By convex duality, this is equivalent to maximize the function  $F$ on $\R^m$. This function is differentiable and concave. So  $\nabla F(\lambda) = 0 $ is equivalent to  $\lambda \in arg \max_{x \in \R^m} F(x)$. In our case 
\begin{align*}
    \nabla F( \lambda) = \frac{  - \sum_{i=1}^n (g(X_i) - Pg)  \exp \left(  \langle \lambda , g(X_i) - Pg \rangle \right)  }{\sum_{j=1}^n \exp \left( \langle \lambda , g(X_j) - Pg \rangle \right)}.
\end{align*}
Thus $\nabla F(\lambda^*) = 0$. It remains to prove that  $F$ is strictly concave. For all $j,k \in \llbracket 1,m \rrbracket $ 
\begin{align*}
    \frac{\partial^2 F}{\partial \lambda_j \partial \lambda_k} &= \frac{  - \sum_{i=1}^n (g_j(X_i) - Pg_j) (g_k(X_i) - Pg_k) \exp \left(  \langle \lambda , g(X_i) - Pg \rangle \right)  }{\sum_{j=1}^n \exp \left( \langle \lambda , g(X_j) - Pg \rangle \right)} \\ 
    &+  \frac{  \sum_{i=1}^n (g_j(X_i) - Pg_j)  \exp \left(  \langle \lambda , g(X_i) - Pg \rangle \right)  }{\sum_{j=1}^n \exp \left( \langle \lambda , g(X_j) - Pg \rangle \right)} \frac{   \sum_{i=1}^n (g_k(X_i) - Pg_k) \exp \left(  \langle \lambda , g(X_i) - Pg \rangle \right)  }{\sum_{j=1}^n \exp \left( \langle \lambda , g(X_j) - Pg \rangle \right)}.
\end{align*}
Denote for all $ \lambda \in \R^m $ and $i \in \llbracket 1,n \rrbracket$, 
\begin{align*}
   q_i(\lambda) &= \frac{ \exp \left(  \langle \lambda , g(X_i) - Pg \rangle \right)  }{\sum_{j=1}^n \exp \left( \langle \lambda , g(X_j) - Pg \rangle \right)}, \\
   Q_n(\lambda) &= \sum_{i=1}^n q_i(\lambda) \delta_{X_i}.
\end{align*}
Observe that for all $ \lambda \in \R^m $, $Q_n(\lambda)$ is a probability measure. The Hessian matrix of $F$ is 
\begin{align*}
    Hess_F = - Var_{Q_n(\lambda)} (g-Pg) = - Var_{Q_n(\lambda)} (g) < 0.
\end{align*}
By using assumption  (\ref{dimension_opt_3}), we deduce that the hessian matrix of  $F$ is negative definite. So $F$ is strictly concave.
\end{proof}
Assume that $Pg = 0$ and denote again $\hat{\lambda}_n := \lambda^* $ to derive the following asymptotic result. 
\begin{theorem} \label{theoreme asymptotique second problème}
Assume that $ \ \mathbb{E}\Vert g(X) \Vert^2 < +\infty$  and $\Sigma = Pgg^T$ is positive definite. We suppose that the assumptions of Theorem \ref{solution second problème} are satisfied. Set $\Sigma_n = P_n g g^T$. Then we have
\begin{itemize}
    \item[$\bullet$] $\max_{1\leq i \leq n} |< \hat{\lambda}_n,g(X_i)> | = o_p(1)$.
    \item[$\bullet$] $\hat{\lambda}_n = - \Sigma_n^{-1} \mathbb{P}_n g + o_p\left(\frac{1}{\sqrt{n}}\right) $.
    \item[$\bullet$] $\sqrt{n}\hat{\lambda}_n \Rightarrow \mathcal{N}(0,\Sigma^{-1})$.
\end{itemize}
\end{theorem}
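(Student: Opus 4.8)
The plan is to follow the self-normalized argument used for empirical likelihood in \cite{owen}, adapted to the exponential tilting produced by the second problem. I would first recast the statement through the function (strictly convex, as was shown in the proof of Theorem \ref{solution second problème})
\begin{align*}
    G(\lambda) = \log\left( \frac{1}{n}\sum_{i=1}^n \exp\left(\langle \lambda, g(X_i)\rangle\right)\right), \qquad \lambda \in \R^m ,
\end{align*}
of which, when $Pg = 0$, $\hat{\lambda}_n$ is the unique minimizer, characterized by $\nabla G(\hat{\lambda}_n) = 0$, i.e. by the estimating equation $\sum_{i=1}^n g(X_i)\exp(\langle\hat{\lambda}_n, g(X_i)\rangle) = 0$. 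The ingredients I would record at the outset are: $\nabla G(0) = \mathbb{P}_n g$, with $\mathbb{P}_n g \to 0$ a.s. and $\sqrt n\,\mathbb{P}_n g \Rightarrow \mathcal{N}(0,\Sigma)$; the Hessian of $G$ at $\lambda$ equals $\Var_{Q_n(\lambda)}(g)$ (this is the computation already made in the proof of Theorem \ref{solution second problème}, since $G = -F - \log n$), so at $\lambda = 0$ it is $\Var_{\mathbb{P}_n}(g) = \Sigma_n - (\mathbb{P}_n g)(\mathbb{P}_n g)^T = \Sigma_n + O_p(n^{-1})$ with $\Sigma_n \to \Sigma$ a.s.; and, since $\E\Vert g(X)\Vert^2 < +\infty$, the self-normalizing bound $m_n := \max_{1\le i\le n}\Vert g(X_i)\Vert = o_p(\sqrt n)$ (because $n\,\mathbb{P}(\Vert g(X_1)\Vert^2 > \veps^2 n) \to 0$). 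One also notes that, $\Sigma$ being positive definite with $Pg=0$, the hypothesis that $Pg$ lies in the convex hull of $\{g(X_1),\dots,g(X_n)\}$ holds with probability tending to one.

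The crux is the rate $\Vert\hat{\lambda}_n\Vert = O_p(n^{-1/2})$. Write $\hat{\lambda}_n = \rho_n\theta_n$ with $\Vert\theta_n\Vert = 1$, $\rho_n \ge 0$; the event $\{\rho_n = 0\}$ (equivalently $\{\mathbb{P}_n g = 0\}$) has probability tending to zero, so I may work on $\{\rho_n > 0\}$. Projecting the estimating equation onto $\theta_n$ and integrating in the radial variable, with $u_i := \langle\theta_n, g(X_i)\rangle$, gives
\begin{align*}
    n\,\langle\theta_n,\mathbb{P}_n g\rangle = -\int_0^{\rho_n}\sum_{i=1}^n u_i^2\, e^{s u_i}\,ds .
\end{align*}
On the event $\{\rho_n m_n \le 1\}$ one has $s u_i \ge -1$, hence $e^{s u_i}\ge e^{-1}$, for all $i$ and all $s\in[0,\rho_n]$, so $\sum_i u_i^2 e^{s u_i}\ge e^{-1} n\,\theta_n^T\Sigma_n\theta_n \ge e^{-1} n\,\lambda_{\min}(\Sigma_n)$; combined with $|\langle\theta_n,\mathbb{P}_n g\rangle|\le\Vert\mathbb{P}_n g\Vert$ this yields $\rho_n \le e\,\Vert\mathbb{P}_n g\Vert/\lambda_{\min}(\Sigma_n) = O_p(n^{-1/2})$. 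On the complementary event $\{\rho_n m_n > 1\}$ the same inequalities (restricting the integral to $[0,1/m_n]$) force $\Vert\mathbb{P}_n g\Vert \ge e^{-1}\lambda_{\min}(\Sigma_n)/m_n$, i.e. $\sqrt n\,\Vert\mathbb{P}_n g\Vert \ge e^{-1}\lambda_{\min}(\Sigma_n)\sqrt n/m_n$; since the left side is $O_p(1)$ while $\sqrt n/m_n \to +\infty$ in probability, this event has probability tending to zero. Hence $\Vert\hat{\lambda}_n\Vert = O_p(n^{-1/2})$, and then the first bullet follows from $\max_{1\le i\le n}|\langle\hat{\lambda}_n, g(X_i)\rangle| \le \Vert\hat{\lambda}_n\Vert\, m_n = o_p(1)$.

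The remaining two bullets come from linearizing the estimating equation. Since $\Vert\hat{\lambda}_n\Vert = O_p(n^{-1/2})$ and $m_n = o_p(\sqrt n)$, the quantity $\max_{1\le i\le n}\sup_{s\in[0,1]}|\langle s\hat{\lambda}_n, g(X_i)\rangle|$ is $o_p(1)$, so $n\,q_i(s\hat{\lambda}_n) = 1 + o_p(1)$ uniformly in $i$ and $s$; together with $\tfrac1n\sum_i\Vert g(X_i)\Vert^2 = O_p(1)$ and $\tfrac1n\sum_i\Vert g(X_i)\Vert = O_p(1)$ this gives $\int_0^1 \Var_{Q_n(s\hat{\lambda}_n)}(g)\,ds = \Sigma_n + o_p(1)$. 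Substituting into the exact expansion $0 = \nabla G(\hat{\lambda}_n) = \mathbb{P}_n g + \big(\int_0^1 \Var_{Q_n(s\hat{\lambda}_n)}(g)\,ds\big)\hat{\lambda}_n$ and inverting (the matrix being invertible with probability tending to one, as it converges to $\Sigma$) yields $\hat{\lambda}_n = -(\Sigma_n + o_p(1))^{-1}\mathbb{P}_n g = -\Sigma_n^{-1}\mathbb{P}_n g + o_p(n^{-1/2})$, since $(\Sigma_n + o_p(1))^{-1} - \Sigma_n^{-1} = o_p(1)$ and $\mathbb{P}_n g = O_p(n^{-1/2})$; this is the second bullet. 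Finally $\sqrt n\,\hat{\lambda}_n = -\Sigma_n^{-1}\sqrt n\,\mathbb{P}_n g + o_p(1)$, and since $\Sigma_n^{-1}\to\Sigma^{-1}$ a.s. and $\sqrt n\,\mathbb{P}_n g \Rightarrow \mathcal{N}(0,\Sigma)$, Slutsky's lemma gives $\sqrt n\,\hat{\lambda}_n \Rightarrow \mathcal{N}(0,\Sigma^{-1}\Sigma\Sigma^{-1}) = \mathcal{N}(0,\Sigma^{-1})$, which is the third bullet.

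The main obstacle is securing the $\sqrt n$-rate of $\hat{\lambda}_n$ under only the second-moment hypothesis on $g$: since $\E\Vert g\Vert^3$ is not available, the variation of the Hessian $\Var_{Q_n(\lambda)}(g)$ cannot be controlled by a crude third-order Taylor bound, and one must instead route the argument through $m_n = o_p(\sqrt n)$, exactly as in the empirical likelihood treatment of \cite{owen}. Once the rate is in hand the rest is the standard sandwich computation sketched above, the only point deserving care being that the $o_p$ and $O_p$ estimates on $q_i(s\hat{\lambda}_n)$ and on $\Var_{Q_n(s\hat{\lambda}_n)}(g)$ must be obtained uniformly in $i$ and in $s\in[0,1]$.
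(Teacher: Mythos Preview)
Your proof is correct and follows the same overall architecture as the paper's: establish the rate $\Vert\hat{\lambda}_n\Vert = O_p(n^{-1/2})$ from the estimating equation together with $m_n = o_p(\sqrt n)$, deduce the first bullet, linearize to get the second, and conclude by Slutsky. The technical execution, however, differs in two places.

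For the rate, the paper applies a first-order Taylor expansion with Lagrange remainder to $\exp(\rho_n\theta_n^Tg(X_i))$, keeps only the indices with $\theta_n^Tg(X_i)\ge 0$ to force the remainder factor $\ge 1$, and then proves a separate Glivenko--Cantelli lemma to the effect that $\inf_{\theta\in S^{m-1}}\tfrac1n\sum_i(\theta^Tg(X_i))^2\,1_{\theta^Tg(X_i)\ge 0}$ converges a.s.\ to a strictly positive limit. Your radial-integral identity and case split on $\{\rho_n m_n\le 1\}$ achieves the same bound more directly: the uniform lower bound $e^{su_i}\ge e^{-1}$ on $[0,1/m_n]$ replaces the one-sided truncation, and you invoke only $\lambda_{\min}(\Sigma_n)\to\lambda_{\min}(\Sigma)>0$, bypassing the auxiliary lemma entirely.

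For the linearization, the paper uses a second-order Taylor expansion with Lagrange remainder and controls the cubic term via the second part of Owen's lemma, $\tfrac1n\sum_i\Vert g(X_i)\Vert^3 = o_p(\sqrt n)$. Your exact integral form $0=\mathbb{P}_ng+\bigl(\int_0^1\Var_{Q_n(s\hat{\lambda}_n)}(g)\,ds\bigr)\hat{\lambda}_n$ with the uniform estimate $nq_i(s\hat{\lambda}_n)=1+o_p(1)$ needs only $\tfrac1n\sum_i\Vert g(X_i)\Vert^2=O_p(1)$ and so avoids any third-moment device. In short, your argument is a legitimate variant that is somewhat more self-contained; the paper's route stays closer to Owen's original toolkit and makes the role of positive definiteness of $\Sigma$ more explicit through its dedicated lemma.
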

\begin{remark}
Likewise we can replace  $\Sigma_n$ by the empirical variance of $g$.
\end{remark}
\begin{proof}
Firstly, let us prove that $\Vert \hat{\lambda}_n \Vert = O_p \left(\frac{1}{\sqrt{n}} \right)$. Denote $\hat{\lambda}_n =\rho_n \theta_n $ with $\rho_n  \geq 0$ , $\Vert\theta_n \Vert =1$. Set for all  $\lambda \in \R^m$ 
\begin{align*}
     q_i(\lambda) &= \frac{ \exp \left(  \langle \lambda , g(X_i) \rangle \right)  }{\sum_{j=1}^n \exp \left( \langle \lambda , g(X_j) \rangle \right)}, \\
     \varphi(\lambda) &=  \sum_{i=1}^n q_i(\lambda) g(X_i).
\end{align*}
By definition of  $\hat{\lambda}_n$, we have $\varphi(\hat{\lambda}_n) = 0$. So  
\begin{align*}
    0 &= \Vert \phi (\hat{\lambda}_n) \Vert = \Vert \phi(\rho_n \theta_n) \Vert  = \mid \langle \theta_n , \varphi(\rho_n \theta_n) \rangle \mid.
\end{align*}
By denoting $S_n(\lambda) = \sum_{j=1}^n \exp \left( \langle \lambda , g(X_j) \rangle \right) > 0 $, we have 
\begin{align*}
    \frac{\theta_n^T}{S_n ( \hat{\lambda}_n)} \sum_{i=1}^n  \exp \left( \rho_n \theta_n^T  g(X_i) \right) g(X_i) = 0.
\end{align*}

By using the Taylor's theorem with Lagrange remainder (first order) we have that for all $i \in [|1,n|]$, there exists $r_i$ between $0$ and $\theta_n^T g(X_i)$ such that
\begin{align*}
    \exp\left(\rho_n \theta_n^T g(X_i) \right) = 1 + \rho_n \theta_n^T g(X_i) \exp(\rho_n r_i).
\end{align*}
Denote $S^{m-1}$ the unit sphere of $\R^m$. So 
\begin{align*}
     \frac{\theta_n^T}{S_n ( \hat{\lambda}_n)} \sum_{i=1}^n  \exp \left( \rho_n \theta_n^T  g(X_i) \right) g(X_i) &= \frac{n\theta_n^T}{S_n ( \hat{\lambda}_n)} \mathbb{P}_n g + \frac{n \rho_n}{S_n ( \hat{\lambda}_n)} \frac{1}{n} \sum_{i=1}^n (\theta_n^T g(X_i))^2 \exp(\rho_n r_i) \\
     &\geq \frac{n\theta_n^T}{S_n ( \hat{\lambda}_n)} \mathbb{P}_n g + \frac{n \rho_n}{S_n ( \hat{\lambda}_n)} \frac{1}{n} \sum_{i=1}^n (\theta_n^T g(X_i))^2 1_{\theta_n^T g(X_i) \geq 0} \\
     &\geq \frac{n\theta_n^T}{S_n ( \hat{\lambda}_n)} \mathbb{P}_n g + \frac{n \rho_n}{S_n ( \hat{\lambda}_n)} \inf_{\theta \in S^{m-1}} \frac{1}{n} \sum_{i=1}^n (\theta^T g(X_i))^2 1_{\theta^T g(X_i) \geq 0}.
\end{align*}
since for all $i \in \llbracket 1,n \rrbracket$  if $\theta_n^T g(X_i) \geq 0$ then $r_i \geq 0$. Thus 
\begin{align*}
            \frac{n\theta_n^T}{S_n ( \hat{\lambda}_n)} \mathbb{P}_n g + \frac{n \rho_n}{S_n ( \hat{\lambda}_n)} \inf_{\theta \in S^{m-1}} \frac{1}{n} \sum_{i=1}^n (\theta^T g(X_i))^2 1_{\theta^T g(X_i) \geq 0} \leq 0.
\end{align*}
By multiplying by $\frac{S_n ( \hat{\lambda}_n)}{n}$  and since  $ \Vert \theta_n \Vert = 1$, we have 
\begin{align*}
    \rho_n \inf_{\theta \in S^{m-1}} \frac{1}{n} \sum_{i=1}^n (\theta^T g(X_i))^2 1_{\theta^T g(X_i) \geq 0} \leq \Vert \mathbb{P}_n g \Vert.
\end{align*}
It is enough to prove that $\inf_{\theta \in S^{m-1}} \frac{1}{n} \sum_{i=1}^n (\theta^T g(X_i))^2 1_{\theta^T g(X_i) \geq 0}$ converges in probability to a  strictly positive constant in order to deduce that  $\rho_n = O_p\left( \frac{1}{\sqrt{n}}\right)$. For that, we need a technical lemma. 

\begin{lemma} \label{lemme-glivenko}
We have 
$$\inf_{\theta \in S^{m-1}} \frac{1}{n} \sum_{i=1}^n (\theta^T g(X_i))^2 1_{\theta^T g(X_i) \geq 0} \xrightarrow[n \to  \infty]{a.s.}\inf_{\theta \in S^{m-1}} P (\theta^T g)^2 1_{\theta^T g \geq 0}  > 0.  $$
\end{lemma}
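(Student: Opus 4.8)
The plan is to decompose the statement into a uniform strong law of large numbers over the compact index set $S^{m-1}$ and a separate argument for the strict positivity of the limit. Write $f_\theta(x) = (\theta^T g(x))^2\,1_{\theta^T g(x) \ge 0}$ for $\theta \in S^{m-1}$, so that the left-hand side of the claim is $\inf_{\theta} \mathbb{P}_n f_\theta$ and we must show it converges almost surely to $c := \inf_{\theta \in S^{m-1}} P f_\theta$ and that $c>0$.

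For the convergence, I would first establish the pointwise bound
\[
|f_\theta(x) - f_{\theta'}(x)| \le 2\,\|\theta - \theta'\|\,\|g(x)\|^2, \qquad \theta,\theta' \in S^{m-1},
\]
by a short case analysis on the signs of $\theta^T g(x)$ and $\theta'^T g(x)$ (the quadratic factor absorbing the jump of the indicator), together with the envelope $0 \le f_\theta \le \|g\|^2$, which is $P$-integrable by $\mathbb{E}\|g(X)\|^2 < +\infty$. Fixing $\delta>0$ and a finite $\delta$-net $\theta_1,\dots,\theta_N$ of the compact set $S^{m-1}$, for any $\theta$ one picks $\theta_j$ with $\|\theta-\theta_j\|\le\delta$ and obtains $|\mathbb{P}_n f_\theta - Pf_\theta| \le |\mathbb{P}_n f_{\theta_j} - Pf_{\theta_j}| + 2\delta(\mathbb{P}_n\|g\|^2 + P\|g\|^2)$; taking the supremum over $\theta$, then letting $n\to\infty$ with the ordinary strong law applied to the finitely many $f_{\theta_j}$ and to $\|g\|^2$, yields $\limsup_n \sup_{\theta\in S^{m-1}} |\mathbb{P}_n f_\theta - Pf_\theta| \le 4\delta\,P\|g\|^2$ almost surely, and letting $\delta\to 0$ along a sequence gives $\sup_{\theta} |\mathbb{P}_n f_\theta - Pf_\theta| \to 0$ a.s. Since $F \mapsto \inf_\theta F(\theta)$ is $1$-Lipschitz for the supremum norm, $|\inf_\theta \mathbb{P}_n f_\theta - c| \le \sup_\theta |\mathbb{P}_n f_\theta - Pf_\theta| \to 0$ a.s., which is the claimed convergence. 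Measurability of the suprema causes no trouble, since $\theta\mapsto f_\theta(x)$ is continuous and hence the suprema may be restricted to a countable dense subset of $S^{m-1}$.

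For the strict positivity, the same Lipschitz bound shows that $\theta \mapsto Pf_\theta$ is Lipschitz, hence continuous, on the compact set $S^{m-1}$, so $c$ is attained at some $\theta_0$ with $\|\theta_0\|=1$. If $c=0$ then $Pf_{\theta_0}=0$, hence $f_{\theta_0}=0$ $P$-a.s., i.e. $\theta_0^T g \le 0$ $P$-a.s.; but $P(\theta_0^T g) = \theta_0^T Pg = 0$ because $Pg=0$ here, so a non-positive random variable with zero mean must vanish, giving $\theta_0^T g = 0$ $P$-a.s. and therefore $\theta_0^T \Sigma \theta_0 = P(\theta_0^T g)^2 = 0$, which contradicts the positive-definiteness of $\Sigma = Pgg^T$. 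Hence $c>0$.

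I expect the uniform strong law in the second step to be the only delicate point — namely obtaining the Lipschitz-in-$\theta$ estimate cleanly and organizing the discretization argument; once that is in place, the positivity follows immediately from $Pg=0$ and $\Sigma\succ 0$.
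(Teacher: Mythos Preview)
Your proof is correct and follows the same overall architecture as the paper: uniform convergence of $\theta\mapsto\mathbb{P}_n f_\theta$ over the compact sphere, then the $1$-Lipschitz property of $F\mapsto\inf_\theta F(\theta)$, and finally the positivity argument via $Pg=0$ and $\Sigma\succ 0$. The positivity step is essentially identical to the paper's.

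The one genuine difference is in how you obtain the uniform strong law. The paper simply invokes a standard Glivenko--Cantelli criterion for classes continuously parametrised by a compact set with integrable envelope (continuity of $\theta\mapsto f_\theta(x)$ for each $x$, compactness of $S^{m-1}$, envelope $\|g\|^2\in L^1(P)$), without any quantitative modulus. You instead establish the explicit Lipschitz bound $|f_\theta(x)-f_{\theta'}(x)|\le 2\|\theta-\theta'\|\,\|g(x)\|^2$ and run a direct $\delta$-net / ordinary SLLN argument. Your route is more elementary and self-contained, at the cost of having to justify the Lipschitz estimate (which is indeed correct: in the mixed-sign case $a\ge 0>b$ one has $a\le a-b\le\|\theta-\theta'\|\,\|g(x)\|$, so $a^2\le\|\theta-\theta'\|\,\|g(x)\|^2$). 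The paper's route is shorter if one is willing to cite the abstract Glivenko--Cantelli result. Both are fine; your Lipschitz bound also gives continuity of $\theta\mapsto Pf_\theta$ for free, whereas the paper deduces that continuity separately by dominated convergence.
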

\begin{proof}
Set for all $\theta \in S^{m-1}$  
$$\gamma_n(\theta) =  \frac{1}{n} \sum_{i=1}^n (\theta^T g(X_i))^2 1_{\theta^T g(X_i) \geq 0}. $$
Firstly, let us prove that $\gamma_n$ is continuous. Endow  $S^{m-1}$ with the subspace topology of $\R^m$. Define for all $i\in [|1,n|]$  
$$\begin{array}{ccccc}
h_i & : & S^{m-1} & \to & \R \\
 & & \theta & \mapsto & \theta^T g(X_i).  \\
\end{array}$$
Set the following function $ l : \R \to \R $ defined by  $l(x) = x^2 1_{x \geq 0}$. Remark that for all  $i \in \llbracket 1,n \rrbracket$ the functions  $h_i$ and $l$ are continuous and $\gamma_n( \theta ) = \frac{1}{n} \sum_{i=1}^n l \circ h_i(\theta).$

So for all $n \in \N^*$, $\gamma_n$ is continuous on  $S^{m-1}$. Define $\psi : C^0(S^{m-1}) \rightarrow \mathbb{R}$ by  $\psi(f) = \inf_{\Vert \theta \Vert =1 } f(\theta)$. Let us prove that  $\psi$ is continuous. Indeed for all $f_1,f_2 \in C^0(S^{m-1})$
\begin{align*}
    \mid \psi(f_1) - \psi(f_2) \mid &= \mid \inf f_1 - \inf f_2 \mid \leq \Vert f_1 - f_2 \Vert_{\infty}.
\end{align*}
So $\psi$ is  1-Lipschitz. Notice that 
\begin{align*}
    \psi(\gamma_n) = \inf_{\theta \in S^{m-1}} \frac{1}{n} \sum_{i=1}^n (\theta^T g(X_i))^2 1_{\theta^T g(X_i) \geq 0}.
\end{align*}
Prove that  $\gamma_n$ converge uniformly almost surely  to  $\gamma$ where for all  $\theta \in S^{m-1}$   $ \gamma(\theta) = P (\theta^T g)^2 1_{\theta^T g \geq 0}. $
For that, remark that for all  $\theta \in S^{m-1}$ 
\begin{align*}
    | \gamma_n(\theta) - \gamma(\theta) | = | (\mathbb{P}_n - P)(\theta^T g)^2 1_{\theta^T g \geq 0} |.
\end{align*}
Define the following class of functions  
\begin{align*}
    \mathcal{F}= \left\{ h_\theta : x \mapsto \theta^T g(x))^2 1_{\theta^T g(x) \geq 0} \ , \ \theta \in S^{m-1} \right\}.
\end{align*}
Since the sphere $S^{m-1}$ is compact, for all  $x \in \mathcal{X}$ the map  $\theta \mapsto h_\theta(x)$ is continuous and  \begin{align*}
    \sup_{\theta \in S^{m-1}} |h_\theta| \leq \Vert g \Vert^2 \in L^1(P).
\end{align*}
We deduce that the class  $\mathcal{F}$ is $P$-Glivenko-Cantelli. So
$$ \gamma_n \xrightarrow[n \to \infty]{\Vert . \Vert_{\infty}, \  a.s.} \gamma. $$  
By continuity of $\psi$, we have   $ \psi(\gamma_n) \xrightarrow{a.s} \psi(\gamma).$
We conclude that $$\inf_{\theta \in S^{m-1}} \frac{1}{n} \sum_{i=1}^n (\theta^T g(X_i))^2 1_{\theta^T g(X_i) \geq 0} \xrightarrow[n \to  \infty]{a.s}\inf_{\theta \in S^{m-1}} P (\theta^T g)^2 1_{\theta^T g \geq 0}.    $$
Finally we show that  $ \inf_{\theta \in S^{m-1}} P (\theta^T g)^2 1_{\theta^T g \geq 0}  > 0$.
Since  $P \Vert g\Vert^2 < +\infty$, the map $\theta \mapsto P (\theta^T g)^2 1_{\theta^T g \geq 0}$ is continuous. By compactness of the sphere, there exists  $\theta_* \in S^{m-1}$ such that  
\begin{align*}
    \inf_{\theta \in S^{m-1}} P (\theta^T g)^2 1_{\theta^T g \geq 0} =  P (\theta_*^T g)^2 1_{\theta_*^T g \geq 0}.
\end{align*}
If $P(\theta_*^T g)^2 1_{\theta_*^T g \geq 0} = 0$ then  $P$-a.s x, $\theta_*^T g(x)  \leq 0$. In others words $\mathbb{P}\left( \theta_*^T g(X) > 0 \right) = 0.$
Remark that 
\begin{align*}
    0 = \theta_*^T Pg = P \theta_*^T g =  P (\theta_*^T g)1_{\theta_*^T g \leq 0}. 
\end{align*}
So $P$-a.s x, $\theta_*^T g(x) = 0$. Since $\Sigma$ is a matrix definite positive, we obtain a contradiction 
\begin{align*}
  0 = P (\theta_*^T g)^2 = \theta_*^T P gg^T \theta_* = \theta_*^T \Sigma \theta_* > 0.
\end{align*}
  
\end{proof}
Since  $\rho_n = \Vert \hat{\lambda}_n \Vert$, we deduce by Lemma \ref{lemme-glivenko}   
\begin{align*}
    \Vert \hat{\lambda}_n \Vert &= O_p\left( \frac{1}{\sqrt{n}} \right), \\
    \max_{1 \leq i \leq n} \mid \langle \hat{\lambda}_n, g(X_i) \rangle \mid &\leq \Vert \hat{\lambda}_n \Vert \max_{1 \leq i \leq n} \Vert g(X_i) \Vert.
\end{align*}
Prove that $\max_{1 \leq i \leq n} \Vert g(X_i) \Vert = o_p(\sqrt{n})$. For that, we will use Owen's lemma (1990, \cite{owen}).
\begin{lemma}\label{lemme_Owen}
Let $(Y_n)_{n \in \N^*}$ be a sequence i.i.d. of positive random variables and $Z_n = \max_{1 \leq i \leq n} Y_i$. If $\mathbb{E}Y^2 < + \infty$ then 
\begin{align*}
    Z_n &= o_{p}(\sqrt{n}), \\
    \frac{1}{n} \sum_{i=1}^n Y_i^3 &= o_{p}(\sqrt{n}).
\end{align*}
\end{lemma}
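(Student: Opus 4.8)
The plan is to show that both conclusions in fact hold almost surely, which is stronger than the stated convergence in probability, and which matches the $o_{a.s.}$ bookkeeping used elsewhere in the paper.

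First I would establish $Z_n = o_{a.s.}(\sqrt{n})$. Fix $\varepsilon > 0$. Since the $Y_i$ are i.i.d.\ with the common law of $Y$ and $\mathbb{E}Y^2 < +\infty$, the elementary inequality $\sum_{k \geq 1}\mathbb{P}(W \geq k) \leq \mathbb{E}W$ for a nonnegative $W$, applied to $W = Y^2/\varepsilon^2$, gives $\sum_{n \geq 1}\mathbb{P}(Y_n > \varepsilon\sqrt{n}) = \sum_{n \geq 1}\mathbb{P}(Y^2 > \varepsilon^2 n) \leq \mathbb{E}[Y^2]/\varepsilon^2 < +\infty$. By the Borel--Cantelli lemma the event $\{Y_n > \varepsilon\sqrt{n}\}$ occurs for only finitely many $n$, almost surely; so there is a random index $N$ with $Y_n \leq \varepsilon\sqrt{n}$ for all $n \geq N$. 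For every $n$ large enough that $\varepsilon\sqrt{n} \geq \max_{i < N} Y_i$ (this maximum being a.s.\ finite), one then has $Y_i \leq \varepsilon\sqrt{n}$ for all $i \leq n$: this is immediate for $i < N$, while for $N \leq i \leq n$ it follows from $Y_i \leq \varepsilon\sqrt{i} \leq \varepsilon\sqrt{n}$. Hence $\limsup_n Z_n/\sqrt{n} \leq \varepsilon$ almost surely. Taking $\varepsilon = 1/k$, $k \in \N^*$, and intersecting the corresponding full-probability events yields $Z_n/\sqrt{n} \to 0$ a.s.

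For the second statement I would just bound $\frac{1}{n}\sum_{i=1}^n Y_i^3 \leq Z_n \cdot \frac{1}{n}\sum_{i=1}^n Y_i^2$. By the strong law of large numbers $\frac{1}{n}\sum_{i=1}^n Y_i^2 \to \mathbb{E}Y^2$ almost surely, so dividing by $\sqrt{n}$ gives $\frac{1}{n\sqrt{n}}\sum_{i=1}^n Y_i^3 \leq \frac{Z_n}{\sqrt{n}}\cdot\frac{1}{n}\sum_{i=1}^n Y_i^2 \to 0$ a.s.\ by the first part. Thus $\frac{1}{n}\sum_{i=1}^n Y_i^3 = o_{a.s.}(\sqrt{n})$, in particular $o_p(\sqrt{n})$.

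No step here is a genuine obstacle; the only mild subtlety is passing from ``$Y_n \leq \varepsilon\sqrt{n}$ for $n$ large'' to ``$\max_{i \leq n} Y_i \leq \varepsilon\sqrt{n}$ for $n$ large'', which is handled by absorbing the finitely many initial terms once $\varepsilon\sqrt{n}$ exceeds their maximum. If one only wants the $o_p$ statements, there is an even shorter route via $\mathbb{P}(Z_n > \varepsilon\sqrt{n}) \leq n\,\mathbb{P}(Y > \varepsilon\sqrt{n}) \leq \varepsilon^{-2}\mathbb{E}[Y^2 1_{Y > \varepsilon\sqrt{n}}] \to 0$ by dominated convergence, but the almost sure argument above is essentially as short and gives a cleaner conclusion.
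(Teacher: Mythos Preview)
Your proof is correct and follows essentially the same route as the paper. The paper does not prove Lemma~\ref{lemme_Owen} directly (it is attributed to Owen), but it does prove the generalization with an arbitrary exponent $s>0$ in place of $2$, and that proof is the same as yours: Borel--Cantelli on $\sum_n \mathbb{P}(Y_n^s > n)$ to get $Y_n \leq n^{1/s}$ eventually, a countability argument over $\varepsilon = 1/m$ to conclude $Z_n = o_{a.s.}(n^{1/s})$, and then the bound $\frac{1}{n}\sum Y_i^{s+1} \leq Z_n \cdot \frac{1}{n}\sum Y_i^s$ combined with the strong law of large numbers. If anything, your handling of the passage from ``$Y_n \leq \varepsilon\sqrt{n}$ eventually'' to ``$Z_n \leq \varepsilon\sqrt{n}$ eventually'' is more explicit than the paper's, which simply asserts the step.
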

Since $\mathbb{E} \Vert g(X) \Vert^2 < + \infty$, we deduce that by Owen's lemma that  $\max_{1 \leq i \leq n} \Vert g(X_i) \Vert  = o_p(\sqrt{n})$. So we get the first assertion 
\begin{align*}
    \max_{1 \leq i \leq n} \mid \langle \hat{\lambda}_n, g(X_i) \rangle \mid = o_p(1).
\end{align*}
About the second assertion, apply the Taylor's theorem with Lagrange remainder (second order). More precisely for all  $i \in \llbracket 1,n \rrbracket$, there exists $s_i$ between  $0$ and $\theta_n^T g(X_i)$ such that 
\begin{align*}
    \exp\left( \rho_n \theta_n^T g(X_i) \right) &= 1 + \rho_n \theta_n^T g(X_i) + \rho_n^2 \left(\theta_n^T g(X_i) \right)^2 \exp\left( \rho_n s_i \right) \\
    &= 1 +  \hat{\lambda}_n^T g(X_i) + \left(\hat{\lambda}_n^T g(X_i) \right)^2 \exp\left( \rho_n s_i \right).
\end{align*}
Thus
\begin{align*}
    0 &= \frac{1}{S_n(\hat{\lambda}_n)} \sum_{i=1}^n \exp\left( \hat{\lambda}_n^T g(X_i) \right) g(X_i) \\
    &= \frac{1}{S_n(\hat{\lambda}_n)} \sum_{i=1}^n g(X_i) + \frac{1}{S_n(\hat{\lambda}_n)} \left( \sum_{i=1}^n g(X_i) g(X_i)^T \right)  \hat{\lambda}_n + \frac{1}{S_n(\hat{\lambda}_n)} \sum_{i=1}^n \left(\hat{\lambda}_n^T g(X_i) \right)^2 \exp\left( \rho_n s_i \right) g(X_i).
\end{align*}
By multiplying by $\frac{S_n(\hat{\lambda}_n)}{n}$, we obtain 
\begin{align*}
    &0 =  \mathbb{P}_n g  + \Sigma_n \hat{\lambda}_n + \frac{1}{n} \sum_{i=1}^n \left(\hat{\lambda}_n^T g(X_i) \right)^2 \exp\left( \rho_n s_i \right) g(X_i). 
\end{align*}
This is equivalent to 
\begin{align*}
    - \Sigma_n \hat{\lambda}_n  = \mathbb{P}_n g + \frac{1}{n} \sum_{i=1}^n \left(\hat{\lambda}_n^T g(X_i) \right)^2 \exp\left( \rho_n s_i \right) g(X_i).
\end{align*}
Finally we prove that  
\begin{align*}
    \left \Vert \frac{1}{n} \sum_{i=1}^n \left(\hat{\lambda}_n^T g(X_i) \right)^2 \exp\left( \rho_n s_i \right) g(X_i) \right \Vert = o_p\left( \frac{1}{\sqrt{n}} \right).
\end{align*}
By Owen's lemma and the first assertion, we have for all $i \in \llbracket 1,n \rrbracket$
\begin{align}
    &\exp\left( \rho_n s_i \right) \leq \exp\left( \rho_n | \theta_n^T g(X_i) |\right) \leq \exp\left( \max_{1 \leq i \leq n} | \hat{\lambda}_n^T g(X_i) |\right) = 1 + o_p(1),  \\
    & \frac{1}{n} \sum_{i=1}^n \Vert g(X_i) \Vert^3 = o_p(\sqrt{n}), \\
    &\Vert \hat{\lambda}_n \Vert^2 = O_p \left(\frac{1}{n} \right).
\end{align}
Hence
\begin{align*}
     \left \Vert \frac{1}{n} \sum_{i=1}^n \left(\hat{\lambda}_n^T g(X_i) \right)^2 \exp\left( \rho_n s_i \right) g(X_i) \right \Vert &\leq \exp\left( \max_{1 \leq i \leq n} | \hat{\lambda}_n^T g(X_i) |\right) \Vert \hat{\lambda}_n \Vert^2  \frac{1}{n} \sum_{i=1}^n \Vert g(X_i) \Vert^3 \\
     &\leq (1+o_p(1))  O_p \left(\frac{1}{n} \right) o_p(\sqrt{n}) = o_p\left( \frac{1}{\sqrt{n}} \right).
\end{align*}
Therefore
$$  \hat{\lambda}_n = - \Sigma_n^{-1} \mathbb{P}_n g  + o_p(\frac{1}{\sqrt{n}}).$$
We conclude that 
$ \sqrt{n}\hat{\lambda}_n \Rightarrow \mathcal{N}(0,\Sigma^{-1}).$
\end{proof}

\section{Towards a definition of the informed empirical measure } \label{section_def_mesure}
\subsection{Equivalence of projections and the informed empirical measure}

Assume that $Pg = 0$. In the previous section, we have studied these two optimization problems 
\begin{align}
    &arg \min_{Q \in \mathcal{P}^I\left( \mathcal{Z} \right)} KL ( \mathbb{P}_n || Q ), \label{premier problème} \\
    &arg \min_{Q \in \mathcal{P}^I\left( \mathcal{Z} \right)} KL (Q || \mathbb{P}_n ). \label{second problème}
\end{align}
In this section, we assume that the conditions of existence and uniqueness of the solution of  (\ref{premier problème}) and (\ref{second problème}) are satisfied -- see Theorem \ref{theoreme asymptotique premier problème} and \ref{theoreme asymptotique second problème}. So by Theorem \ref{theoreme_premier_opt}, the solution of the first optimization problem is given by \begin{align*}
    q^{(1)}_i = \frac{1}{n} \frac{1}{1 + \langle \hat{\lambda}_n^{(1)}, g(X_i) \rangle}, \ i \in \llbracket 1,n \rrbracket.
\end{align*}
By Theorem \ref{solution second problème}, the solution of the second optimization problem is given by 
\begin{align*}
    & q^{(2)}_i =  \frac{ \exp \left( \langle \hat{\lambda}_n^{(2)}, g(X_i) \rangle \right),  }{S_n(\hat{\lambda}_n^{(2)} ) } ,  \ i \in \llbracket 1,n \rrbracket, \\
    & S_n(\hat{\lambda}_n^{(2)} ) = \sum_{k=1}^n \exp\left(\langle \hat{\lambda}_n^{(2)}, g(X_k) \rangle \right).
\end{align*}
Denote these two probability measures, respectively, 
\begin{align}
    \mathbb{P}_n^{(1)} &= \sum_{i=1}^n q_i^{(1)} \delta_{X_i}, \label{mesure_vraisemblance} \\
    \mathbb{P}_n^{(2)} &= \sum_{i=1}^n q_i^{(2)} \delta_{X_i}.
\end{align}
Since these weights $q^{(1)}$ and $q^{(2)}$ are not explicit and we don't know if the submanifold $\mathcal{P}^I(\mathcal{Z})$ is autoparallel -- see Theorem \ref{theoreme geometrie} --  it is necessary to find an explicit approximation of these solutions. 

\begin{proposition} \label{Approximation}
Assume that $0$ belongs to the convex hull of $\{g(X_1),\cdots,g(X_n) \}$ and that assumption (\ref{hypothèse_utilisée}) is satisfied. Moreover, suppose that  $\Sigma = Var_P g$ is invertible. Denote $\Sigma_n$ the empirical variance of $g$. Then for all  $i \in \llbracket 1,n \rrbracket$ 
\begin{align*}
    q^{(1)}_i &= p_i + \varepsilon_n^{(1)}, \\
    q^{(2)}_i &= p_i + \varepsilon_n^{(2)},
\end{align*}
with $\varepsilon_n^{(1)}$ and $\varepsilon_n^{(2)}$ independent of $i$ and such that 
\begin{align*}
     \varepsilon_n^{(j)} &= o_p\left( \frac{1}{n}\right), \  j \in \llbracket 1,2 \rrbracket, \\
    p_i &= \frac{1}{n} \left( 1 - g(X_i)^T \Sigma_n^{-1} \mathbb{P}_n g + (\mathbb{P}_n g)^T \Sigma_n^{-1} \mathbb{P}_n g \right) , \ i \in \llbracket 1,n \rrbracket. 
\end{align*}
Moreover it holds 
\begin{align*}
    \sum_{i=1}^n p_i &= 1, \\
    \sum_{i=1}^n p_i g(X_i) &= 0.
\end{align*}
\end{proposition}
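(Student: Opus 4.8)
The plan is to expand the two exact weight formulas given by Theorems \ref{theoreme_premier_opt} and \ref{solution second problème} to first order in $\hat\lambda_n^{(j)}$, using the asymptotic expansions $\hat\lambda_n^{(1)} = \Sigma_n^{-1}\mathbb{P}_n g + o_p(1/\sqrt n)$ and $\hat\lambda_n^{(2)} = -\Sigma_n^{-1}\mathbb{P}_n g + o_p(1/\sqrt n)$ from Theorems \ref{theoreme asymptotique premier problème} and \ref{theoreme asymptotique second problème}, together with $\max_{1\le i\le n}|\langle\hat\lambda_n^{(j)},g(X_i)\rangle| = o_p(1)$ and $\max_i\|g(X_i)\| = o_p(\sqrt n)$ (Owen's Lemma \ref{lemme_Owen}). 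First I would treat $q^{(1)}_i$: since $1+\langle\hat\lambda_n^{(1)},g(X_i)\rangle$ is bounded away from $0$ uniformly in $i$ (it is $1+o_p(1)$), write $\frac{1}{1+x} = 1 - x + \frac{x^2}{1+x}$ with $x = \langle\hat\lambda_n^{(1)},g(X_i)\rangle$. Then
\begin{align*}
    q^{(1)}_i = \frac1n\left(1 - \langle\hat\lambda_n^{(1)},g(X_i)\rangle + \frac{\langle\hat\lambda_n^{(1)},g(X_i)\rangle^2}{1+\langle\hat\lambda_n^{(1)},g(X_i)\rangle}\right).
\end{align*}
Substituting $\hat\lambda_n^{(1)} = \Sigma_n^{-1}\mathbb{P}_n g + o_p(1/\sqrt n)$ into the linear term gives $-\frac1n g(X_i)^T\Sigma_n^{-1}\mathbb{P}_n g$ plus an $i$-independent-up-to-$o_p$ correction, and the quadratic remainder is bounded by $\frac1n(1+o_p(1))\|\hat\lambda_n^{(1)}\|^2\|g(X_i)\|^2 = \frac1n O_p(1/n)o_p(n) = o_p(1/n)$ uniformly in $i$; but this bound depends on $i$, so to get a correction term $\varepsilon_n^{(1)}$ \emph{independent of} $i$ one must be slightly more careful — see the next paragraph.

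For $q^{(2)}_i$, I would expand the exponential: with $x_i = \langle\hat\lambda_n^{(2)},g(X_i)\rangle = o_p(1)$ uniformly, $e^{x_i} = 1 + x_i + r_i$ where $r_i = O(x_i^2 e^{|x_i|})$, so
\begin{align*}
    q^{(2)}_i = \frac{1 + x_i + r_i}{\sum_{k=1}^n(1+x_k+r_k)} = \frac{1 + x_i + r_i}{n + n\langle\hat\lambda_n^{(2)},\mathbb{P}_n g\rangle + \sum_k r_k}.
\end{align*}
Expanding the denominator $(n + n a_n + R_n)^{-1}$ with $a_n = \langle\hat\lambda_n^{(2)},\mathbb{P}_n g\rangle = O_p(1/n)$ and $R_n = \sum_k r_k = o_p(1)$ (since $|R_n| \le (1+o_p(1))\|\hat\lambda_n^{(2)}\|^2\sum_k\|g(X_k)\|^2/... $, and $\frac1n\sum_k\|g(X_k)\|^2 = O_p(1)$ so $R_n = O_p(1/n)\cdot O_p(n) = O_p(1)$ — here I'd need $R_n = o_p(1)$, which follows from $\|\hat\lambda_n^{(2)}\|^2 = O_p(1/n)$ and $\frac1n\sum\|g(X_i)\|^2\to P\|g\|^2$), and then collecting terms: the leading contribution to $q^{(2)}_i$ is $\frac1n(1 + x_i)(1 - a_n) + (\text{$o_p(1/n)$ uniformly})$. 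Plugging $\hat\lambda_n^{(2)} = -\Sigma_n^{-1}\mathbb{P}_n g + o_p(1/\sqrt n)$ gives $x_i = -g(X_i)^T\Sigma_n^{-1}\mathbb{P}_n g + o_p(1/\sqrt n)$ and $a_n = -(\mathbb{P}_n g)^T\Sigma_n^{-1}\mathbb{P}_n g + o_p(1/n)$, whence $q^{(2)}_i = \frac1n(1 - g(X_i)^T\Sigma_n^{-1}\mathbb{P}_n g + (\mathbb{P}_n g)^T\Sigma_n^{-1}\mathbb{P}_n g) + o_p(1/n) = p_i + o_p(1/n)$, and the same computation with the opposite sign convention works for $q^{(1)}$.

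The main obstacle is the requirement that $\varepsilon_n^{(j)}$ be \emph{independent of $i$}. The naive Taylor remainders are $o_p(1/n)$ only uniformly over $i$, and they genuinely do depend on $i$. The clean way around this is to define $\varepsilon_n^{(j)} := q^{(j)}_{i} - p_{i}$ and \emph{prove it does not depend on $i$} by exploiting the two exact linear constraints $\sum_i q^{(j)}_i = 1$, $\sum_i q^{(j)}_i g(X_i) = 0$ (which hold by Theorems \ref{theoreme_premier_opt}, \ref{solution second problème}) and the two identities $\sum_i p_i = 1$, $\sum_i p_i g(X_i) = 0$ (which I would verify directly: $\sum_i p_i = 1 - (\mathbb{P}_n g)^T\Sigma_n^{-1}\mathbb{P}_n g + (\mathbb{P}_n g)^T\Sigma_n^{-1}\mathbb{P}_n g = 1$ using $\frac1n\sum g(X_i) = \mathbb{P}_n g$, and $\sum_i p_i g(X_i) = \mathbb{P}_n g - \Sigma_n\Sigma_n^{-1}\mathbb{P}_n g + \mathbb{P}_n g\,(\mathbb{P}_n g)^T\Sigma_n^{-1}\mathbb{P}_n g = (\mathbb{P}_n g)(\mathbb{P}_n g)^T\Sigma_n^{-1}\mathbb{P}_n g$, wait — this requires care, since $p_i$ as written does not obviously satisfy the second constraint exactly; more likely the statement intends $q^{(j)}_i = p_i + \varepsilon_n^{(j)}$ with $\varepsilon_n^{(j)}$ the small $i$-dependent remainder reorganized so that its $i$-dependence is itself $o_p(1/n)$, and the two displayed identities $\sum p_i = 1$, $\sum p_i g(X_i) = 0$ are exact algebraic facts about $p_i$). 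I would therefore first establish the two exact identities for $p_i$ by direct linear algebra (this pins down that $p$ is the orthogonal-projection-type approximation and forces the constraints), then show $q^{(j)}_i - p_i = c_n^{(j)} + o_p(1/n)$ where $c_n^{(j)}$ collects the genuinely $i$-independent pieces of the expansion and the $o_p(1/n)$ is uniform; averaging $q^{(j)}_i - p_i$ over $i$ and using $\sum_i q^{(j)}_i = \sum_i p_i = 1$ shows the uniform $o_p(1/n)$ remainder has vanishing mean, and one absorbs it into a single $i$-independent $\varepsilon_n^{(j)} := \frac1n\sum_k(q^{(j)}_k - p_k) = c_n^{(j)} + o_p(1/n) = o_p(1/n)$ while the residual $q^{(j)}_i - p_i - \varepsilon_n^{(j)}$ is... — in fact the cleanest formulation, which I would adopt, is simply to observe that the expansions above already yield $q^{(j)}_i = p_i + (\text{terms that are } o_p(1/n) \text{ uniformly in } i)$, and since the proposition only asserts the existence of \emph{some} decomposition, one takes $\varepsilon_n^{(j)}$ to be the average remainder and checks via the constraints that the per-index residual is also $o_p(1/n)$; the two identities $\sum p_i = 1$ and $\sum p_i g(X_i) = 0$ are then the separate, purely algebraic closing step verified by substituting the definition of $\Sigma_n$ and $\mathbb{P}_n g$.
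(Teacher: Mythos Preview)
Your approach is the same as the paper's: Taylor-expand the exact weight formulas using $\max_i|\langle\hat\lambda_n^{(j)},g(X_i)\rangle|=o_p(1)$, substitute $\hat\lambda_n^{(j)}=\pm\Sigma_n^{-1}\mathbb{P}_ng+o_p(n^{-1/2})$, control the cross term via $\max_i\|g(X_i)\|=o_p(\sqrt n)$, and then insert the harmless $i$-independent quantity $\tfrac1n(\mathbb{P}_ng)^T\Sigma_n^{-1}\mathbb{P}_ng=o_p(1/n)$ to arrive at $p_i$. The paper carries out exactly this expansion, with essentially the same bounds, so your plan is on target.

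Two points where you get tangled and the paper does not:

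\emph{The ``independence of $i$'' clause.} You are right that the Taylor remainders genuinely depend on $i$; what your expansion (and the paper's) actually proves is $\max_{1\le i\le n}|q_i^{(j)}-p_i|=o_p(1/n)$. The paper's proof writes this as a single $o_p(1/n)$ symbol and does not further justify literal $i$-independence of $\varepsilon_n^{(j)}$. In other words, the paper treats ``independent of $i$'' as shorthand for ``uniform in $i$'', and that uniform bound is all that is ever used downstream (in the subsequent Corollary one only needs $\sqrt n\sum_i(q_i^{(j)}-p_i)f(X_i)\to 0$, which follows from the uniform bound and an envelope in $L^1(P)$). So your elaborate attempt to manufacture a genuinely $i$-free $\varepsilon_n^{(j)}$ via averaging and constraint identities is unnecessary: just record $\max_i|q_i^{(j)}-p_i|=o_p(1/n)$ and move on.

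\emph{The identity $\sum_i p_i g(X_i)=0$.} Your computation stalls because you wrote $\tfrac1n\sum_i g(X_i)g(X_i)^T=\Sigma_n$, which is false: $\Sigma_n$ is the \emph{centered} second moment. With $\Sigma_n=\mathbb{P}_n gg^T-(\mathbb{P}_ng)(\mathbb{P}_ng)^T$ one gets
\[
\sum_i p_i g(X_i)=\mathbb{P}_ng-(\mathbb{P}_ngg^T)\Sigma_n^{-1}\mathbb{P}_ng+(\mathbb{P}_ng)(\mathbb{P}_ng)^T\Sigma_n^{-1}\mathbb{P}_ng
=\mathbb{P}_ng-\bigl(\Sigma_n+(\mathbb{P}_ng)(\mathbb{P}_ng)^T\bigr)\Sigma_n^{-1}\mathbb{P}_ng+(\mathbb{P}_ng)(\mathbb{P}_ng)^T\Sigma_n^{-1}\mathbb{P}_ng=0,
\]
exactly as the paper computes in its last displayed line. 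The identity $\sum_i p_i=1$ you had right.
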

\begin{proof}
First assume that $j = 1$. Then for all  $i \in  \llbracket 1,n \rrbracket $ 
\begin{align*}
    q^{(1)}_i = \frac{1}{n} \frac{1}{1 + \langle \hat{\lambda}_n^{(1)}, g(X_i) \rangle}
\end{align*}
and, by Theorem \ref{theoreme asymptotique premier problème} we have
\begin{align*}
    &\max_{1 \leq k \leq n} |\langle \hat{\lambda}_n^{(1)}, g(X_k) \rangle | = o_p(1), \\
    &\hat{\lambda}_n^{(1)} = \Sigma_n^{-1} \mathbb{P}_n g + o_p\left( \frac{1}{\sqrt{n}}\right).
\end{align*}
Thus
\begin{align*}
    q^{(1)}_i = \frac{1}{n} \left( 1 -  \langle \hat{\lambda}_n^{(1)}, g(X_i) \rangle + o_p(1) \right) = \frac{1}{n} \left( 1 -  \langle \Sigma_n^{-1} \mathbb{P}_n g , g(X_i) \rangle  - \langle  o_p\left( \frac{1}{\sqrt{n}}\right) , g(X_i) \rangle + o_p(1) \right).
\end{align*}
Since  $\max_{1 \leq k \leq n} \Vert g(X_k) \Vert = o_p(\sqrt{n})$ -- see Owen's lemma in \cite{owen} -- we deduce that
\begin{align*}
    q^{(1)}_i &= \frac{1}{n} \left( 1 -  \langle \Sigma_n^{-1} \mathbb{P}_n g , g(X_i) \rangle   + o_p(1) \right) = \frac{1}{n} \left( 1 -  \langle \Sigma_n^{-1} \mathbb{P}_n g , g(X_i) \rangle \right) + o_p\left( \frac{1}{n}\right).
\end{align*}
Moreover since  $\frac{1}{n}(\mathbb{P}_n g)^T \Sigma_n^{-1} \mathbb{P}_n g = o_p\left( \frac{1}{n}\right) $ we get
\begin{align*}
    q^{(1)}_i = p_i + o_p\left( \frac{1}{n}\right).
\end{align*}
Now assume that $j=2$. Then for all $i \in [|1,n|]$,
\begin{align*}
    & q^{(2)}_i =  \frac{ \exp \left( \langle \hat{\lambda}_n^{(2)}, g(X_i) \rangle \right) }{S_n(\hat{\lambda}_n^{(2)} ) },\\
  \text{with} \    & S_n(\hat{\lambda}_n^{(2)} ) = \sum_{k=1}^n \exp\left(\langle \hat{\lambda}_n^{(2)}, g(X_k) \rangle \right).
\end{align*}
Hence Theorem \ref{theoreme asymptotique second problème} implies
\begin{align*}
    &\max_{1 \leq k \leq n} |\langle \hat{\lambda}_n^{(2)}, g(X_k) \rangle | = o_p(1),\\
    &\hat{\lambda}_n^{(2)} = - \Sigma_n^{-1} \mathbb{P}_n g + o_p\left( \frac{1}{\sqrt{n}}\right),
\end{align*}
and
\begin{align*}
    q^{(2)}_i &=  \frac{1}{n} \frac{n}{ S_n(\hat{\lambda}_n^{(2)} ) } \left( 1 +  \langle \hat{\lambda}_n^{(2)}, g(X_i) \rangle + o_p(1) \right) = \frac{1}{n} \frac{n}{ S_n(\hat{\lambda}_n^{(2)} ) } \left( 1 -  \langle \Sigma_n^{-1} \mathbb{P}_n g , g(X_i) \rangle  + \langle  o_p\left( \frac{1}{\sqrt{n}}\right) , g(X_i) \rangle + o_p(1) \right)  .
\end{align*}
We deduce that $q^{(2)}_i =  \frac{1}{n} \frac{n}{ S_n(\hat{\lambda}_n^{(2)} ) } \left( 1  -  \langle \Sigma_n^{-1} \mathbb{P}_n g , g(X_i) \rangle + o_p(1) \right)$.

Let us prove that  $\frac{n}{ S_n(\hat{\lambda}_n^{(2)} ) } = 1+o_p(1)$. Since $e^x \geq 1 + x$ for all $x \in \R$ we have 
\begin{align*}
     1 + \langle \hat{\lambda}_n^{(2)}, \mathbb{P}_n g \rangle \leq \frac{ S_n(\hat{\lambda}_n^{(2)} ) }{n}  \leq \exp\left( \max_{1 \leq k \leq n} |\langle \hat{\lambda}_n^{(2)}, g(X_k) \rangle | \right).
\end{align*}
It ensues that $\frac{n}{ S_n(\hat{\lambda}_n^{(2)} ) } = 1+o_p(1)$ and
\begin{align*}
    q^{(2)}_i &= \frac{1}{n} (1+o_p(1)) \left( 1  -  \langle \Sigma_n^{-1} \mathbb{P}_n g , g(X_i) \rangle + o_p(1) \right) =\frac{1}{n} \left( 1  -  \langle \Sigma_n^{-1} \mathbb{P}_n g , g(X_i) \rangle + o_p(1) \right).
\end{align*}
Likewise since $\frac{1}{n}(\mathbb{P}_n g)^T \Sigma_n^{-1} \mathbb{P}_n g = o_p\left( \frac{1}{n}\right) $ it holds $q^{(2)}_i = p_i + o_p\left( \frac{1}{n}\right).$

Finally remark that
\begin{align*}
    \sum_{i=1}^n p_i &= 1 - (\mathbb{P}_n g)^T \Sigma_n^{-1} \mathbb{P}_n g + (\mathbb{P}_n g)^T \Sigma_n^{-1} \mathbb{P}_n g = 1, \\
    \sum_{i=1}^n p_i g(X_i) &= \mathbb{P}_n g - \mathbb{P}_n gg^T \Sigma_n^{-1} \mathbb{P}_n g + \mathbb{P}_n g (\mathbb{P}_n g)^T \Sigma_n^{-1} \mathbb{P}_n g = \mathbb{P}_n g - \mathbb{P}_n g = 0,
\end{align*}
by using $\Sigma_n = \mathbb{P}_n gg^T - \mathbb{P}_n g (\mathbb{P}_n g)^T$. 
\end{proof}

Proposition \ref{Approximation} allows to define the informed empirical measure as follows.
\begin{definition}\label{definition_mesure_informée}
Assume that  $\Sigma = Var_P g$ is invertible. The informed empirical measure is defined to be
\begin{align*}
   \mathbb{P}_n^I :=  \sum_{i=1}^n p_i \delta_{X_i}
\end{align*}
where, for all $i \in  \llbracket 1,n \rrbracket $ 
\begin{align*}
    p_i = \frac{1}{n} \left( 1 - g(X_i)^T \Sigma_n^{-1} \mathbb{P}_n g + (\mathbb{P}_n g)^T \Sigma_n^{-1} \mathbb{P}_n g \right).
\end{align*}
\end{definition}
\begin{remark}
$\mathbb{P}_n^I$ is always defined as soon as  $\Sigma_n = Var_n g$ is invertible. 
\end{remark}
Observe that for any measurable function  $f : \mathcal{X} \to \R$ it holds
\begin{align*}
    \mathbb{P}_n^I f = \mathbb{P}_n f  - cov_n(g,f)^T \Sigma_n^{-1} \mathbb{P}_n g.
\end{align*}
It turns out that this measure coincides with the adaptive estimator of the measure with auxiliary information studied by M. Albertus \cite{albertus} which is a particular case of the general principle of S. Tarima and D. Pavlov \cite{tarima}.

The following corollary states that $\mathbb{P}_n^I$ has the same asymptotic properties as $\mathbb{P}_n^{(1)}$ and  $\mathbb{P}_n^{(2)}$.

\begin{corollary}
For all $j \in  \llbracket 1,2 \rrbracket$ and for any function $f : \mathcal{X} \to \R$ that is integrable with respect to $P$ 
\begin{align*}
    \sqrt{n} (\mathbb{P}_n^{(j)} - P)(f) = \sqrt{n} \left(\mathbb{P}_n^{I} - P\right)(f) + r_n^{(j)}  \mathbb{P}_n f
\end{align*}
where for all  $j \in  \llbracket 1,2 \rrbracket $, $r_n^{(j)} = o_p\left( \frac{1}{\sqrt{n}} \right)$ is independent of  $f$. 
\end{corollary}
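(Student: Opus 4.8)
The plan is to read the identity straight off Proposition~\ref{Approximation}; essentially nothing new has to be proved. First I would invoke that proposition: under its hypotheses, for each $j\in\llbracket 1,2\rrbracket$ and every $i\in\llbracket 1,n\rrbracket$ one has $q_i^{(j)}=p_i+\varepsilon_n^{(j)}$, where the correction $\varepsilon_n^{(j)}$ is the same for all indices $i$ and is $o_p(1/n)$, and where $\sum_{i=1}^n p_i=1$. The whole force of the statement lies in $\varepsilon_n^{(j)}$ being independent of $i$, so that when $\mathbb{P}_n^{(j)}-\mathbb{P}_n^I$ is tested against a function the $f$-dependence factors out completely.

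Next, I would fix a $P$-integrable $f$ and compute, using $\mathbb{P}_n^{(j)}=\sum_{i=1}^n q_i^{(j)}\delta_{X_i}$ and $\mathbb{P}_n^I=\sum_{i=1}^n p_i\delta_{X_i}$,
\[
\mathbb{P}_n^{(j)}f-\mathbb{P}_n^I f=\sum_{i=1}^n\bigl(q_i^{(j)}-p_i\bigr)f(X_i)=\varepsilon_n^{(j)}\sum_{i=1}^n f(X_i)=n\,\varepsilon_n^{(j)}\,\mathbb{P}_n f .
\]
Subtracting $Pf$ from both measures and multiplying by $\sqrt n$ then gives
\[
\sqrt n\,(\mathbb{P}_n^{(j)}-P)(f)=\sqrt n\,(\mathbb{P}_n^I-P)(f)+r_n^{(j)}\,\mathbb{P}_n f,\qquad r_n^{(j)}:=n^{3/2}\,\varepsilon_n^{(j)},
\]
and $r_n^{(j)}$ is visibly built only from the data (through $\hat\lambda_n^{(j)}$, $\mathbb{P}_n g$ and $\Sigma_n$), hence does not depend on $f$.

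It then remains to check that $r_n^{(j)}=o_p(1/\sqrt n)$, which I would obtain by tracking the orders of the remainder $\varepsilon_n^{(j)}$ inside the proof of Proposition~\ref{Approximation}: there it is assembled from replacing $\hat\lambda_n^{(j)}$ by $\mp\Sigma_n^{-1}\mathbb{P}_n g$, from the second–order Taylor term of $x\mapsto 1/(1+x)$ (resp.\ $x\mapsto e^x$), and from the normalising term $(\mathbb{P}_n g)^T\Sigma_n^{-1}\mathbb{P}_n g$, after which one feeds in $\hat\lambda_n^{(j)}=O_p(n^{-1/2})$, $\mathbb{P}_n g=O_p(n^{-1/2})$ and $\max_i\|g(X_i)\|=o_p(\sqrt n)$ (Owen's lemma). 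I expect the genuinely delicate point to be exactly this last step — the uniform‑in‑$i$ accounting that makes $q_i^{(j)}-p_i$ both small enough and index‑free once summed against an arbitrary integrable $f$ — which is precisely what Proposition~\ref{Approximation} packages; granting that proposition, the corollary is the one–line substitution displayed above, with the same $r_n^{(j)}$ serving uniformly for every $f$.
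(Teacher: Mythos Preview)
Your approach is exactly the paper's: invoke Proposition~\ref{Approximation} and substitute. Your $r_n^{(j)}=n^{3/2}\varepsilon_n^{(j)}$ is the algebraically consistent choice (the paper's one-line proof writes $r_n^{(j)}=\sqrt n\,\varepsilon_n^{(j)}$, which appears to be a slip), and you are right to flag that the stated rate $o_p(1/\sqrt n)$ does not follow from $\varepsilon_n^{(j)}=o_p(1/n)$ alone --- a point the paper's proof does not address either.
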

\begin{proof}
We apply Proposition \ref{Approximation} and set for all  $j \in  \llbracket 1,2 \rrbracket $ , $r_n^{(j)} = \sqrt{n} \varepsilon_n^{(j)}$. 
\end{proof}
\begin{remark}
Let $\mathcal{F}$ be a class of function. If $\sqrt{n} \left(\mathbb{P}_n^{I} - P\right)$ indexed by $\mathcal{F}$ converges in distribution to a limit process $G_I$ in $l^\infty(\mathcal{F})$ then for all $j \in  \llbracket 1,2 \rrbracket $ the empirical process $\sqrt{n} (\mathbb{P}_n^{(j)} - P)$ converges also in distribution to $G_I$ in $l^\infty(\mathcal{F})$. 
\end{remark}

\subsection{Weights of the informed empirical measure}

Let illustrate the difference of these four measures $\mathbb{P}_n^{(1)}$, $\mathbb{P}_n^{(2)}$, $\mathbb{P}_n^I$ and $\mathbb{P}_n$ by comparing the distribution of weights between them -- see Figure $1$. To this aim we simulate $n=500$ i.i.d. random variables  with distribution $P = \mathcal{N}(0,1)$ and we incorporate the auxiliary information $I$ given by $Pg$ with for all $x \in \R$, $g(x)= (x,x^2)^T$. Observe that the distribution of weights between these three measures $\mathbb{P}_n^{(1)}$, $\mathbb{P}_n^{(2)}$, $\mathbb{P}_n^I$ are very similar.
% \vspace{0.5cm}
\begin{figure}[h]
\begin{center}
\includegraphics[scale = 0.5]{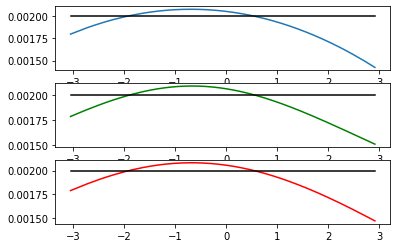} 
\qquad
\includegraphics[scale = 0.5]{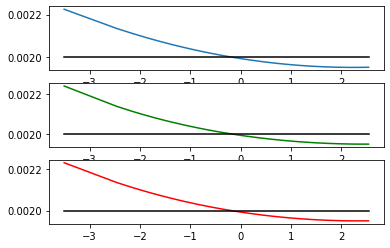}
\caption{Comparison between $\mathbb{P}_n^I$ (blue), $\mathbb{P}_n^{(1)}$ (green), $\mathbb{P}_n^{(2)}$ (red) and $\mathbb{P}_n $ (black).}
\label{Figure}
\end{center}
\end{figure}

The following proposition states that under a moment condition, with probability one $\mathbb{P}_n^I$ is a probability measure for $n$ sufficiently large. 
\begin{proposition} \label{mesure_proba}
Assume that there exists  $\varepsilon > 0$ such that  $P \Vert g\Vert^{4+\varepsilon} < +\infty$. Then almost surely for $n$ sufficiently large 
\begin{align*}
    \mathbb{P}_n^I = \sum_{i=1}^n p_i \delta_{X_i}
\end{align*}
is a probability measure. 
\end{proposition}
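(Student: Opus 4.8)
The plan is to show that each weight $p_i$ is positive with probability tending to one fast enough to apply a Borel--Cantelli argument. Recall from Definition \ref{definition_mesure_informée} that
\[
p_i = \frac{1}{n}\left( 1 - g(X_i)^T \Sigma_n^{-1} \mathbb{P}_n g + (\mathbb{P}_n g)^T \Sigma_n^{-1} \mathbb{P}_n g \right),
\]
and that $\sum_{i=1}^n p_i = 1$ automatically. Since $n p_i = 1 + (\mathbb{P}_n g)^T \Sigma_n^{-1} \mathbb{P}_n g - g(X_i)^T \Sigma_n^{-1}\mathbb{P}_n g$, and the middle term is nonnegative (as $\Sigma_n$ is positive definite for $n$ large, by the SLLN applied to $\Sigma_n \to \Sigma$), it suffices to control $\max_{1\le i\le n} |g(X_i)^T \Sigma_n^{-1} \mathbb{P}_n g|$ and show it is eventually $< 1$ almost surely. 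First I would bound this by $\|\Sigma_n^{-1}\| \cdot \|\mathbb{P}_n g\| \cdot \max_{1\le i \le n}\|g(X_i)\|$. The factor $\|\Sigma_n^{-1}\|$ converges a.s.\ to $\|\Sigma^{-1}\|$, hence is a.s.\ bounded; the factor $\|\mathbb{P}_n g\|$ is $O_{a.s.}(\sqrt{\log\log n / n})$ by the law of the iterated logarithm (or simply $o_{a.s.}(1)$ by the SLLN, which already suffices once paired with a good bound on the max).

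The key quantitative input is a rate for $\max_{1\le i\le n}\|g(X_i)\|$ under the moment assumption $P\|g\|^{4+\varepsilon} < \infty$. The standard fact here is that if $\mathbb{E}\, Y^{4+\varepsilon} < \infty$ for i.i.d.\ positive $Y_i$, then $\max_{1\le i\le n} Y_i = o_{a.s.}(n^{1/(4+\varepsilon)})$; this follows from $\sum_n \mathbb{P}(Y_n > c\, n^{1/(4+\varepsilon)}) < \infty$ via Borel--Cantelli together with the telescoping bound $\max_{i\le n} Y_i \le \max\{Y_1,\dots,Y_{n_0}\} + \max_{n_0 < i \le n} Y_i$. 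Applying this with $Y_i = \|g(X_i)\|$ gives $\max_{1\le i \le n}\|g(X_i)\| = o_{a.s.}(n^{1/(4+\varepsilon)})$. Combining the three factors,
\[
\max_{1\le i\le n} \bigl| g(X_i)^T \Sigma_n^{-1} \mathbb{P}_n g \bigr| = O_{a.s.}(1)\cdot o_{a.s.}(1)\cdot o_{a.s.}(n^{1/(4+\varepsilon)}),
\]
which tends to $0$ almost surely provided the $o_{a.s.}(1)$ from $\|\mathbb{P}_n g\|$ beats the $n^{1/(4+\varepsilon)}$ growth. Since $\|\mathbb{P}_n g\| = o_{a.s.}(n^{-1/2 + \delta})$ for every $\delta > 0$ (again by the LIL, or by a Marcinkiewicz--Zygmund type strong law, using that $g$ has a finite moment of order $> 2$), and $-1/2 + 1/(4+\varepsilon) < 0$, the product is $o_{a.s.}(1)$. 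Hence almost surely, for all $n$ large enough, $n p_i \ge 1 - \max_{1\le i\le n}|g(X_i)^T \Sigma_n^{-1}\mathbb{P}_n g| > 0$ for every $i$, so $\mathbb{P}_n^I$ has all positive weights summing to $1$, i.e.\ is a probability measure.

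The main obstacle is making the rates match cleanly: one needs the decay rate of $\|\mathbb{P}_n g\|$ to strictly dominate the growth rate of $\max_i \|g(X_i)\|$, and this is exactly why the hypothesis is $4+\varepsilon$ rather than just a second moment — a bare second moment would only give $\max_i\|g(X_i)\| = o_{a.s.}(\sqrt{n})$ after the fact (and even that via $o_p$, not $o_{a.s.}$), which would not outrun $\|\mathbb{P}_n g\| \sim n^{-1/2}$. I would therefore be careful to state the auxiliary strong-law facts (the a.s.\ rate for the maximum, and a Marcinkiewicz--Zygmund/LIL rate for $\|\mathbb{P}_n g\|$) as explicit lemmas or cite them, and then the remaining argument is the short deterministic manipulation above. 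A minor point to handle is that $\Sigma_n$ need not be invertible for small $n$; this is irrelevant since the claim is only asymptotic, and $\Sigma_n \to \Sigma$ a.s.\ with $\Sigma$ invertible guarantees invertibility (and boundedness of $\Sigma_n^{-1}$) for all large $n$ on a probability-one event.
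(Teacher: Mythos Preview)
Your proposal is correct and follows the same overall architecture as the paper's proof: reduce to showing $\max_{1\le i\le n}|g(X_i)^T\Sigma_n^{-1}\mathbb{P}_n g|\to 0$ a.s., bound this by $\|\Sigma_n^{-1}\|\cdot\|\mathbb{P}_n g\|\cdot\max_i\|g(X_i)\|$, use Borel--Cantelli with the $(4+\varepsilon)$-moment to get $\max_i\|g(X_i)\|=o_{a.s.}(n^{1/(4+\varepsilon)})$, and then show $n^{1/(4+\varepsilon)}\|\mathbb{P}_n g\|\to 0$ a.s.

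The one substantive difference is in this last step. The paper obtains the rate on $\|\mathbb{P}_n g\|$ by invoking a moment inequality for empirical processes from van der Vaart--Wellner (so that $\mathbb{E}|\alpha_n(g_j)|^{4+\varepsilon}$ is bounded uniformly in $n$), then applying Markov's inequality and Borel--Cantelli to the summable tail bound $\mathbb{P}(n^{1/(4+\varepsilon)}\|\mathbb{P}_n g\|>\epsilon)\lesssim n^{-(s-2)/2}$ with $s=4+\varepsilon>4$. Your route via the law of the iterated logarithm is more elementary and only uses the second moment of $g$: since $\|\mathbb{P}_n g\|=O_{a.s.}(\sqrt{\log\log n/n})$, the product $n^{1/(4+\varepsilon)}\|\mathbb{P}_n g\|$ is $O_{a.s.}(n^{1/(4+\varepsilon)-1/2}\sqrt{\log\log n})\to 0$. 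This is cleaner and avoids the empirical-process machinery; the paper's argument, on the other hand, would generalize more readily if one wanted uniform control over a class rather than finitely many $g_j$. Your mention of Marcinkiewicz--Zygmund is a slight misdirection (that SLLN concerns $1\le p<2$), but the LIL already does the job, so this does not affect correctness.
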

\begin{proof}
By Proposition \ref{Approximation}, it is enough to prove that almost surely for all $n$ sufficiently large it holds $\min_{1 \leq i \leq n} p_i > 0$. For that, we need the following technical lemma that extends Owen's lemma \ref{lemme_Owen}. 
\begin{lemma}
Let $(Y_n)_{n \in \N^*}$ be i.i.d. positive random variables and $Z_n = \max_{1 \leq i \leq n} Y_i$. If $\ \mathbb{E}Y^s < + \infty$ with $s > 0$ then 
\begin{align*}
    Z_n &= o_{a.s.}(n^{\frac{1}{s}}), \\
    \frac{1}{n} \sum_{i=1}^n Y_i^{s+1} &= o_{a.s.}(n^{\frac{1}{s}}).
\end{align*}
\end{lemma}
\begin{proof}
Since $\mathbb{E}Y^s < +\infty$, we have  $$\sum_{n \geq 1} \mathbb{P}(Y_n > n^{1/s})=\sum_{n \geq 1} \mathbb{P}(Y_1^s > n) < + \infty$$

and, by Borel-Cantelli lemma 
$Y_n > n^{\frac{1}{s}}$ is a.s. satisfied for a finite set in $\N^*$. Therefore, for all $A > 0$, $Z_n > An^{\frac{1}{s}} $ is a.s. satisfied for a finite set in $\N^*$.

Since $\N^*$ is countable, we then a.s. have, for all $m \in \N^*$,
$$ 0 \leq  \liminf_n{\frac{Z_n}{n^{\frac{1}{s}}}} \leq \limsup_n{\frac{Z_n}{n^{\frac{1}{s}}}} \leq \frac{1}{m}$$
hence $Z_n = o_{a.s.}(n^{\frac{1}{s}})$. 

Concerning the second assertion, observe that 
$$ 0 \leq \frac{1}{n} \sum_{i=1}^n Y_i^{s+1} \leq \frac{Z_n}{n} \sum_{i=1}^n Y_i^s = o_{a.s.}(n^{\frac{1}{s}}) $$
by the strong law of large numbers.  
\end{proof}
Since $P \Vert g \Vert^{4+\varepsilon} < + \infty$ by assumption, the previous lemma yields
\begin{align*}
    \frac{\max_{1 \leq i \leq n} \Vert g(X_i) \Vert}{n^{\frac{1}{4+\varepsilon}}}  \xrightarrow[n \to \infty]{a.s.} 0.
\end{align*}
Remark that
\begin{align*}
    \min_{1 \leq i \leq n} np_i &= 1 - \max_{1 \leq i \leq n} g(X_i)^T \Sigma_n^{-1} \mathbb{P}_n g  + (\mathbb{P}_n g)^T \Sigma_n^{-1} \mathbb{P}_n g, \\
    |\max_{1 \leq i \leq n} g(X_i)^T \Sigma_n^{-1} \mathbb{P}_n g | &\leq \max_{1 \leq i \leq n} | g(X_i)^T \Sigma_n^{-1} \mathbb{P}_n g| \\
    & \leq \max_{1 \leq i \leq n} \Vert g(X_i) \Vert \Vert \Sigma_n^{-1} \Vert \Vert \mathbb{P}_n g \Vert.
\end{align*}
It suffices to prove that
\begin{align*}
 n^{\frac{1}{4+\varepsilon}} \Vert \mathbb{P}_n g \Vert \xrightarrow[n \to \infty]{a.s.} 0.
\end{align*}
For that, we need the following Theorem of \textit{Wellner} and \textit{Van der Vaart} in chapter 2.5 \cite{wellner}.
\begin{lemma}
Let $\mathcal{F}$ be a  $P$-measurable class with envelope function $F$. Then for all $p \geq 1$, there exists $c_p>0$  such that,
\begin{align*}
    \Vert \Vert \alpha_n \Vert^*_{\mathcal{F}} \Vert_{L^p(P)} \leq c_p J(1,\mathcal{F}) \Vert F \Vert_{L^{2 \vee p}(P)}
\end{align*}
where $$ J(1,\mathcal{F}) = \sup_{Q} \ \int_{0}^1 \sqrt{1+\log N \left( \epsilon \Vert F \Vert_{L^2(Q)},\mathcal{F},L^2(Q) \right) } \ d\epsilon.$$
and $N \left( \epsilon \Vert F \Vert_{L^2(Q)},\mathcal{F},L^2(Q) \right)$ is the minimal numbers of balls of radius $\epsilon \Vert F \Vert_{Q,2}$ needed to  cover $\mathcal{F}$ in $L^2(Q)$. Here the supremum is taken over all finitely discrete probability measure on $\left( \mathcal{X}, \mathcal{A} \right)$. 
\end{lemma}
Set for all $j \in  \llbracket 1,m \rrbracket $, $\mathcal{F}_j = \{g_j\}$ and denote $s=4+\varepsilon$. Remark that  $J(1,\mathcal{F}_j) = 1$. Then 
\begin{align*}
    \mathbb{P}(n^{\frac{1}{s}} \Vert \mathbb{P}_n g \Vert > \epsilon) &\leq \sum_{j=1}^m \mathbb{P}(n^{\frac{1}{s}} \mid \mathbb{P}_n g_j \mid > \epsilon) 
    = \sum_{j=1}^m \mathbb{P}\left( \mid \alpha_n (g_j) \mid^s > \left(\frac{\sqrt{n}}{n^{\frac{1}{s}}}\epsilon \right)^s \right).
\end{align*}
So 
\begin{align*}
       \mathbb{P}(n^{\frac{1}{s}} \Vert \mathbb{P}_n g \Vert > \epsilon) \leq \sum_{j=1}^m \frac{\mathbb{E}(\mid \alpha_n(g_j) \mid^s)}{n^{\alpha(1/2 - 1/s)} \epsilon^s} \leq c_s \sum_{j=1}^m \frac{\Vert g_j \Vert_{L^s(P)}^s}{n^{s\frac{s-2}{2s}} \epsilon^s}.
\end{align*}
Since $s > 4$, we get  $s\frac{s-2}{2s} > 1$ and,  by Borel-Cantelli's lemma, $n^{\frac{1}{s}} \Vert \mathbb{P}_n g \Vert \xrightarrow[n \to \infty]{a.s.} 0$. We conclude that 
\begin{align*}
 n^{\frac{1}{4+\varepsilon}} \Vert \mathbb{P}_n g \Vert \xrightarrow[n \to \infty]{a.s.} 0.
\end{align*}

\end{proof}

% \subsection{Repartition of weights of $\mathbb{P}_n^I$}

%In this section, we study the repartition of weights of $\mathbb{P}_n^I$. The auxiliary information is given by $Pg$ and we assume that $Pg = 0$. 
The weights  $(p_i)_{1 \leq i \leq n}$ are given for all  $i \in  \llbracket 1,n \rrbracket $ 
\begin{align*}
    p_i = \frac{1}{n} \left( 1 - g(X_i)^T \Sigma_{n}^{-1} \mathbb{P}_n g + (\mathbb{P}_n g)^T \Sigma_{n}^{-1} \mathbb{P}_n g \right).
\end{align*}
Set $A_n = - \frac{1}{n} \Sigma_{n}^{-1} \mathbb{P}_n g $ and $B_n = \frac{1}{n} \left( 1 + \langle \Sigma_{n}^{-1} \mathbb{P}_n g , \mathbb{P}_n g  \rangle \right) $. Define for all $(x,y) \in \mathcal{X} \times \R^m$  
\begin{align*}
    \psi_n(y) &= \langle A_n ,y \rangle + B_n , \\
    \varphi_n(x) &= \langle A_n ,g(x) \rangle + B_n = \psi_n \circ g (x).
\end{align*}
Observe that for all $i \in  \llbracket 1,n \rrbracket $, $p_i = \varphi_n(X_i)$ and $\varphi_n$ is an affine transformation of $g$. Suppose that $m=1$ and $g$ is monotone. The weights associated to the ordered sample are also ordered. Illustrate this by a simulation. We generate $n = 500$ random variables i.i.d.  with distribution $P = \mathcal{N}(0,1)$. In the left hand (resp. right hand) graph, we plot $\varphi_n$ with $I$ given by $g(x) = x$ (resp. $g(x) = x^2$) for all $x \in \R$.

\begin{figure}[h]
\begin{center}
\includegraphics[scale = 0.5]{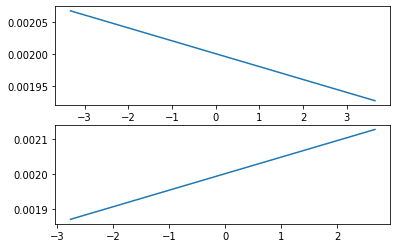}
\qquad
\includegraphics[scale = 0.5]{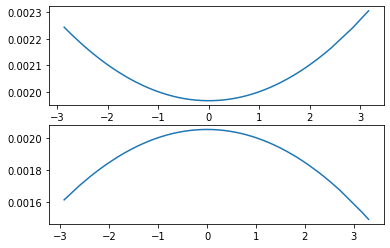}
\caption{The sign of $A_n$ changes randomly.}
\label{Figure}
\end{center}
\end{figure}

\section{Asymptotic results and concentration} \label{section_resultat_asymptotique}
\subsection{ $P$-Glivenko-Cantelli and $P$-Donsker properties under minimal assumptions}

Remind that $\mathbb{P}_n^I$ is the informed empirical measure of Definition \ref{definition_mesure_informée} and write $\alpha_n^I = \sqrt{n} \left( \mathbb{P}_n^I - P\right)$ the informed empirical process. In this section we derive asymptotic results for $\mathbb{P}_n^I$ under minimal assumptions.

Given a class of functions $\mathcal{F}$, if $\Vert \mathbb{P}_n^I - P \Vert_\mathcal{F} = \sup_{f \in \mathcal{F}} |\mathbb{P}_n^I f - Pf |$ is not measurable, its minimal measurable majorant $\Vert \mathbb{P}_n^I - P \Vert_\mathcal{F}^*$ is used -- as well as the outer probability $\mathbb{P}^*$, see Chapter $1.2$ in \cite{wellner}. Denote $\Vert \cdot \Vert$ the euclidean norm on $\R^m$. Let $A \in \mathcal{M}_m \left( \R \right)$ be a matrix, we denote $\Vert A \Vert$ the operator norm with respect to the euclidean norm. 

The following theorem states that a $P$-Glivenko Cantelli class for $\mathbb{P}_n$ is also a $P$-Glivenko Cantelli for $\mathbb{P}_n^I$ as soon as the envelope function $F \in L^2(P)$.

\begin{theorem} \label{resultat_glivenko_cantelli}
Assume that $\Sigma = Var_P g$ is invertible. Let $\mathcal{F}$ be a $P$-Glivenko-Cantelli class with measurable envelope function $F \in L^2(P)$. Then we have  
$$ \Vert \mathbb{P}_n^I - P \Vert_\mathcal{F}^* \xrightarrow[n \to \infty]{a.s.} 0.$$
\end{theorem}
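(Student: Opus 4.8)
The plan is to reduce the Glivenko--Cantelli property of $\mathbb{P}_n^I$ to that of $\mathbb{P}_n$ by exploiting the explicit formula
$$
\mathbb{P}_n^I f = \mathbb{P}_n f - \mathrm{cov}_n(g,f)^T \Sigma_n^{-1} \mathbb{P}_n g,
\qquad f \in \mathcal{F},
$$
which shows that $\mathbb{P}_n^I f - \mathbb{P}_n f$ is a bilinear-type correction term that is \emph{uniformly small} over $\mathcal{F}$. First I would write, for $f \in \mathcal{F}$,
$$
\mathbb{P}_n^I f - Pf = (\mathbb{P}_n f - Pf) - \big(\mathbb{P}_n(gf^T) - (\mathbb{P}_n g)(\mathbb{P}_n f)\big)^T \Sigma_n^{-1} \mathbb{P}_n g,
$$
so that
$$
\Vert \mathbb{P}_n^I - P \Vert_{\mathcal{F}} \leq \Vert \mathbb{P}_n - P \Vert_{\mathcal{F}} + \Vert \Sigma_n^{-1} \Vert \, \Vert \mathbb{P}_n g \Vert \, \sup_{f \in \mathcal{F}} \big\Vert \mathbb{P}_n(gf) - (\mathbb{P}_n g)(\mathbb{P}_n f) \big\Vert .
$$
The first term tends to $0$ a.s.\ by hypothesis; the task is to control the remaining factors.

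The key steps, in order, are: (i) $\Sigma_n \to \Sigma$ a.s.\ by the strong law of large numbers applied coordinatewise to $gg^T$ (using $P\Vert g\Vert^2 < \infty$, which follows from $\Sigma$ being well-defined — or one simply assumes $g \in L^2(P)$ as implicit in ``$\Sigma = \mathrm{Var}_P g$ invertible''), hence $\Sigma$ invertible implies $\Sigma_n$ invertible for $n$ large a.s.\ and $\Vert \Sigma_n^{-1}\Vert = O_{a.s.}(1)$; (ii) $\mathbb{P}_n g \to Pg$ a.s., so $\Vert \mathbb{P}_n g \Vert = O_{a.s.}(1)$ (and in fact $\to \Vert Pg \Vert$), again just the SLLN; (iii) the uniform bound on the covariance factor: for each coordinate $j$,
$$
\sup_{f \in \mathcal{F}} \big| \mathbb{P}_n(g_j f) - (\mathbb{P}_n g_j)(\mathbb{P}_n f) \big|
\leq \sup_{f\in\mathcal{F}} | \mathbb{P}_n(g_j f) - P(g_j f)| + |Pg_j|\cdot\Vert\mathbb{P}_n - P\Vert_{\mathcal F} + |\mathbb{P}_n g_j|\cdot|\text{small}| + \dots,
$$
the point being that $\sup_{f\in\mathcal{F}} |\mathbb{P}_n(g_j f) - P(g_j f)| \to 0$ a.s.\ because the class $g_j \cdot \mathcal{F} = \{ g_j f : f \in \mathcal{F}\}$ is $P$-Glivenko--Cantelli. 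This last point is the heart of the argument.

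I expect step (iii) — establishing that $g_j\cdot\mathcal{F}$ is $P$-Glivenko--Cantelli — to be the main obstacle, since it is not automatic that multiplying a GC class by a fixed $L^2$ function preserves the GC property under only $F \in L^2(P)$ and $g \in L^2(P)$; one needs a preservation theorem. The cleanest route is: $\mathcal{F}$ is $P$-GC with envelope $F\in L^2(P)$, so it is $P$-GC; then $g_j\cdot\mathcal F$ has envelope $|g_j|F$, which lies in $L^1(P)$ by Cauchy--Schwarz since $g_j, F \in L^2(P)$; and multiplication by the fixed measurable function $g_j$ is a continuous (indeed Lipschitz on bounded sets, after truncation) transformation, so one applies a ``Glivenko--Cantelli preservation under multiplication by a fixed $L^1$-dominated function'' lemma — e.g.\ truncate $g_j$ at level $M$, handle the tail $|g_j|\mathbf{1}_{|g_j|>M}F$ by dominated convergence uniformly in $f$ (bounded by $\sup|f/F|\cdot$ envelope argument, or directly by $P(|g_j|\mathbf 1_{|g_j|>M}F) \to 0$), and for the truncated part use that $\mathcal{F}$ GC implies $(g_j\wedge M)\vee(-M)\cdot\mathcal F$ is GC since $x\mapsto (g_j\wedge M)\vee(-M)\,x$ is Lipschitz. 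Once step (iii) is in hand, combining (i)--(iii) gives
$$
\Vert \mathbb{P}_n^I - P\Vert_{\mathcal F}^* \leq \Vert\mathbb{P}_n - P\Vert_{\mathcal F}^* + O_{a.s.}(1)\cdot o_{a.s.}(1) \xrightarrow[n\to\infty]{a.s.} 0,
$$
using the outer-probability version of each convergence as in Chapter~1.2 of \cite{wellner} to handle measurability. Alternatively, if one is willing to assume the stronger moment bound $P\Vert g\Vert^{4+\varepsilon}<\infty$ as in Proposition~\ref{mesure_proba}, the weights $p_i$ are eventually nonnegative and sum to one, and one can instead bound $\Vert\mathbb{P}_n^I - \mathbb{P}_n\Vert_{\mathcal F} \leq \max_i |np_i - 1|\cdot\mathbb{P}_n F$ with $\max_i|np_i-1| = o_{a.s.}(1)$, which sidesteps the preservation lemma entirely; I would mention this as a remark but carry out the argument under the stated minimal assumption $\Sigma$ invertible and $F\in L^2(P)$.
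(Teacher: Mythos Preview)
Your decomposition and overall strategy are correct and match the paper's, but you take a considerably longer route on step (iii) than necessary. The paper dispatches the entire correction term in one line via Cauchy--Schwarz: for each $f\in\mathcal F$,
\[
\Vert \mathrm{cov}_n(g,f)^T\Sigma_n^{-1}\Vert \;\le\; \sqrt{m}\,\sqrt{\mathrm{Var}_n f}\,\Vert\Sigma_n^{-1}\Vert\,\max_{1\le j\le m}\sqrt{\mathrm{Var}_n g_j}
\;\le\; \sqrt{m}\,\Vert F\Vert_{L^2(\mathbb P_n)}\,\Vert\Sigma_n^{-1}\Vert\,\max_{1\le j\le m}\sqrt{\mathrm{Var}_n g_j},
\]
which is uniform in $f$ and $O_{a.s.}(1)$ by the SLLN applied to $F^2$ and to $g_jg_k$. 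Since $Pg=0$ is the standing assumption of the section, $\Vert\mathbb P_n g\Vert\to 0$ a.s.\ supplies the $o_{a.s.}(1)$ factor, and the proof is finished. You instead invest effort in proving that $g_j\cdot\mathcal F$ is $P$-Glivenko--Cantelli via truncation; this is true and your sketch is sound, but it yields uniform convergence of $\mathrm{cov}_n(g,f)$ to $\mathrm{cov}_P(g,f)$, which is more than is needed here --- a uniform $O_{a.s.}(1)$ bound suffices. (Incidentally, the paper does use exactly this GC-preservation argument, but only later in the proof of the Donsker theorem, where it is genuinely required.)

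One small point to tighten: in your step (ii) you record $\Vert\mathbb P_n g\Vert = O_{a.s.}(1)$ and $\to\Vert Pg\Vert$, but then conclude with ``$O_{a.s.}(1)\cdot o_{a.s.}(1)$'' without saying which factor is the $o$. Under the paper's convention $Pg=0$, it is $\Vert\mathbb P_n g\Vert$ that is $o_{a.s.}(1)$; you should make this explicit, since your step (iii) as written only delivers $O_{a.s.}(1)$ for the covariance factor (its limit $\mathrm{cov}_P(g,f)$ is not zero in general).
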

\begin{proof}
Let $f \in \mathcal{F}$. Remark that  
\begin{align*}
     \Vert cov_n(g,f)^T \Sigma_n^{-1} \Vert \leq \sqrt{m} \sqrt{Var_n f} \Vert \Sigma_n^{-1} \Vert \max_{1 \leq i \leq m} \sqrt{Var_n g_i} \leq \sqrt{m} \Vert F \Vert_{L^2(\mathbb{P}_n)} \Vert \Sigma_n^{-1} \Vert \max_{1 \leq i \leq m} \sqrt{Var_n g_i}. 
\end{align*}
Hence
\begin{align*}
    \Vert \mathbb{P}_n^I - P \Vert_\mathcal{F}^* \leq \Vert \mathbb{P}_n - P \Vert_\mathcal{F}^* + \Vert \mathbb{P}_n g \Vert \sup_{f \in \mathcal{F}} \Vert cov_n(g,f)^T \Sigma_n^{-1} \Vert \xrightarrow[n \to \infty]{a.s.} 0.
\end{align*}
\end{proof}
We have a similar result for the $P$-Donsker classes. 
\begin{theorem} \label{resultat_Donsker}
Assume that $\Sigma = Var_P g$ is invertible. Let $\mathcal{F}$ be a $P$-Donsker class with measurable envelope function $F \in L^2(P)$. Then 
$$ \alpha_n^I \Rightarrow G_I \ \ \text{in} \ \ l^\infty(\mathcal{F})$$ 
where $G$ is a $P$-Brownian bridge having almost surely continuous sample paths with respect to the semimetric $\rho^2(h_1,h_2)= Var_P \left( h_1 - h_2\right) $ for  $h_1,h_2 \in L^2(P)$ and, for all $f \in \mathcal{F}$
\begin{align}
    G_I(f) = G(f) - cov_P(g,f)^T \Sigma^{-1} G(g) \label{pont_brownien_informé}
\end{align}
with $G(g) = (G(g_1), \cdots , G(g_m))^T$. 

Moreover for all  $f \in \mathcal{F}$, $Var(G_I(f)) = Var_P f - cov_P(g,f)^T \Sigma^{-1} cov_P(g,f) $ and
\begin{align*}
    Var(G_I(f)) \leq Var(G(f)).
\end{align*}

\end{theorem}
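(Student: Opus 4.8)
The plan is to realize $\alpha_n^I$ as an asymptotically linear functional of the ordinary empirical process $\alpha_n = \sqrt n(\mathbb{P}_n - P)$, and then to combine the continuous mapping theorem with Slutsky's lemma in $l^\infty(\mathcal F)$. Since replacing $g$ by $g - Pg$ changes neither $\Sigma_n = Var_n g$, nor $cov_n(g,\cdot)$, nor the weights $p_i$ of Definition~\ref{definition_mesure_informée}, I may assume $Pg = 0$, so that $\sqrt n\,\mathbb{P}_n g = \alpha_n(g)$. Using the identity $\mathbb{P}_n^I f = \mathbb{P}_n f - cov_n(g,f)^T\Sigma_n^{-1}\mathbb{P}_n g$ recorded just after Definition~\ref{definition_mesure_informée}, I get for every $f\in\mathcal F$
\[ \alpha_n^I(f) = \alpha_n(f) - cov_n(g,f)^T\Sigma_n^{-1}\alpha_n(g) = \beta_n(f) - R_n(f), \]
where $\beta_n(f) := \alpha_n(f) - cov_P(g,f)^T\Sigma^{-1}\alpha_n(g)$ and $R_n(f) := \bigl(cov_n(g,f)^T\Sigma_n^{-1} - cov_P(g,f)^T\Sigma^{-1}\bigr)\alpha_n(g)$. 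The target is to prove $\beta_n \Rightarrow G_I$ in $l^\infty(\mathcal F)$ and $\sup_{f\in\mathcal F}|R_n(f)| = o_p(1)$; then $\alpha_n^I \Rightarrow G_I$ by Slutsky's lemma. Throughout, nonmeasurable suprema are read via outer probabilities as in \cite{wellner}.

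For the first claim, put $\mathcal H := \mathcal F \cup \{g_1,\dots,g_m\}$. This class is $P$-Donsker: $\mathcal F$ is $P$-Donsker with envelope $F\in L^2(P)$, the finite family $\{g_1,\dots,g_m\}$ is $P$-Donsker because invertibility of $\Sigma$ forces $g_j\in L^2(P)$, and a finite union of $P$-Donsker classes is again $P$-Donsker (envelope $F + \sum_j|g_j|\in L^2(P)$). Hence $\alpha_n \Rightarrow G$ in $l^\infty(\mathcal H)$, $G$ being the $P$-Brownian bridge; in particular the convergence is joint over $\mathcal F$ and the coordinates $g_j$. The map $T:l^\infty(\mathcal H)\to l^\infty(\mathcal F)$ given by $(Tz)(f) = z(f) - cov_P(g,f)^T\Sigma^{-1}(z(g_1),\dots,z(g_m))^T$ is linear and bounded, since $\sup_{f\in\mathcal F}\|cov_P(g,f)\| < \infty$ by Cauchy--Schwarz (it is bounded by a multiple of $\|F\|_{L^2(P)}$). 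The continuous mapping theorem then gives $\beta_n = T(\alpha_n|_{\mathcal H}) \Rightarrow T(G|_{\mathcal H})$, and $T(G|_{\mathcal H})(f) = G(f) - cov_P(g,f)^T\Sigma^{-1}G(g) = G_I(f)$, that is, (\ref{pont_brownien_informé}).

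For the remainder, I will use three facts: $\Sigma_n\to\Sigma$ almost surely (strong law, using $\mathbb{P}_n g_jg_k\to Pg_jg_k$ and $(\mathbb{P}_n g)(\mathbb{P}_n g)^T\to 0$), hence $\Sigma_n$ is eventually invertible with $\Sigma_n^{-1}\to\Sigma^{-1}$ a.s.; $\alpha_n(g) = O_p(1)$; and $\sup_{f\in\mathcal F}\|cov_n(g,f) - cov_P(g,f)\|\to 0$ a.s. Granting these, the bound $\sup_{f}|R_n(f)| \le \bigl(\sup_{f}\|cov_n(g,f)-cov_P(g,f)\|\,\|\Sigma_n^{-1}\| + \sup_{f}\|cov_P(g,f)\|\,\|\Sigma_n^{-1}-\Sigma^{-1}\|\bigr)\|\alpha_n(g)\|$ gives $o_p(1)$. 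The hard part is the uniform convergence of $cov_n$: writing $cov_n(g_j,f) = \mathbb{P}_n(g_jf) - (\mathbb{P}_n g_j)(\mathbb{P}_n f)$, the product term is $o(1)$ uniformly in $f$ since $\mathbb{P}_n g_j\to 0$ and $\sup_f|\mathbb{P}_n f|\le\mathbb{P}_n F\to PF < \infty$, but $\mathbb{P}_n(g_jf)$ requires $\{g_jf:f\in\mathcal F\}$ to be $P$-Glivenko--Cantelli, whereas this class carries only the $L^1(P)$ envelope $|g_j|F$. I would resolve this by truncation: split $g_j = g_j\mathbf{1}_{\{|g_j|\le M\}} + g_j\mathbf{1}_{\{|g_j|>M\}}$; multiplication by the bounded factor preserves the $P$-Glivenko--Cantelli property (approximate any $f$ by a point $f_k$ of a finite $L^2(P)$-net of $\mathcal F$, which exists because a $P$-Donsker class is totally bounded in $L^2(P)$, and use $\mathbb{P}_n|f-f_k|\to P|f-f_k|$), while the tail is dominated uniformly in $f$ by $\mathbb{P}_n(|g_j|\mathbf{1}_{\{|g_j|>M\}}F)\to P(|g_j|\mathbf{1}_{\{|g_j|>M\}}F)$, which tends to $0$ as $M\to\infty$ by Cauchy--Schwarz and dominated convergence. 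Letting $n\to\infty$ and then $M\to\infty$ closes the argument, and Slutsky's lemma then yields $\alpha_n^I \Rightarrow G_I$.

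It remains to compute the covariance of $G_I$. From (\ref{pont_brownien_informé}) and the Brownian-bridge covariance $Cov(G(u),G(v)) = cov_P(u,v)$, writing $c := cov_P(g,f)$ and using $Pg = 0$ so that $Cov(G(g),G(f)) = c$ and $Var(G(g)) = Pgg^T = \Sigma$, one gets $Var(G_I(f)) = Var_P f - 2\,c^T\Sigma^{-1}c + c^T\Sigma^{-1}\Sigma\Sigma^{-1}c = Var_P f - cov_P(g,f)^T\Sigma^{-1}cov_P(g,f)$, and hence $Var(G_I(f))\le Var_P f = Var(G(f))$ since $\Sigma^{-1}$ is positive definite. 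Finally, $G_I$ inherits almost surely $\rho$-continuous sample paths from $G$, because the correction $f\mapsto cov_P(g,f)^T\Sigma^{-1}G(g)$ is $\rho$-Lipschitz: by Cauchy--Schwarz $\|cov_P(g,f_1-f_2)\|$ is bounded by a constant times $\sqrt{Var_P(f_1-f_2)} = \rho(f_1,f_2)$.
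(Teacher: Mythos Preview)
Your proof is correct, and for the main weak-convergence step it takes a cleaner route than the paper's. You enlarge the index class to $\mathcal H = \mathcal F \cup \{g_1,\dots,g_m\}$, observe that it is still $P$-Donsker, and push $\alpha_n$ through the bounded linear map $T$; this delivers $\beta_n \Rightarrow G_I$ in one stroke by continuous mapping. The paper instead establishes the weak convergence of $W_n(f) = \alpha_n(f) - cov_P(g,f)^T\Sigma^{-1}\alpha_n(g)$ from first principles, verifying finite-dimensional convergence (multivariate CLT plus continuous mapping) and then the asymptotic $\rho$-equicontinuity condition by hand. Your device avoids the equicontinuity verification entirely, at the small cost of citing the fact that finite unions of $P$-Donsker classes are $P$-Donsker.

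For the remainder $R_n$, the structure of your bound matches the paper's exactly. The only place where your argument is less economical is the claim that $\{g_j f : f\in\mathcal F\}$ is $P$-Glivenko--Cantelli. The paper dispatches this in one line via the preservation theorem for continuous transformations of GC classes (taking $\phi(x,y)=xy$, with integrable envelope $|g_j|F \in L^1(P)$ by Cauchy--Schwarz). Your truncation argument is a legitimate alternative, but the sketch for the bounded piece---``approximate $f$ by a point $f_k$ of a finite $L^2(P)$-net and use $\mathbb{P}_n|f-f_k|\to P|f-f_k|$''---is not quite self-contained: controlling $\mathbb{P}_n|f-f_k|$ \emph{uniformly} over $f\in\mathcal F$ still requires that $\{|f-f'|:f,f'\in\mathcal F\}$ be $P$-GC, which is itself an instance of the same preservation theorem. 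So you may as well invoke the preservation result directly, as the paper does. The variance computation and the $\rho$-continuity remark at the end are correct and agree with the paper.
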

\begin{remark}
Observe that $G_I$ is a mean-zero Gaussian process such that for all $f \in \mathcal{F}$, $Var(G_I(f)) < Var(G(f))$ provided that $cov_P(g,f) \neq 0$. In other words, any function linearly correlated to $g$ benefits of the information $I$.
\end{remark}
\begin{proof}
First remark that 
\begin{align*}
    \alpha_n^I(f) = \alpha_n(f) - cov_P(g,f)^T \Sigma^{-1} \alpha_n(g) +o_p(1).
\end{align*}
Indeed 
\begin{align*}
    (cov_n(g,f)^T \Sigma_n^{-1} - cov(g,f)^T \Sigma^{-1}) \alpha_n(g) &= (cov_n(g,f)^T  - cov_P(g,f)^T)\Sigma_n^{-1} \alpha_n(g) \\ &+ cov_P(g,f)^T(\Sigma_n^{-1} - \Sigma^{-1}) \alpha_n(g).
\end{align*}
The second term tends to $0$ in probability, 
\begin{align*}
    |cov_P(g,f)^T(\Sigma_n^{-1} - \Sigma^{-1})\alpha_n(g)| &\leq \sqrt{m} \sqrt{Var_P f} \max_{1 \leq i \leq m} \sqrt{Var_P g_i}  \Vert \Sigma_n^{-1} - \Sigma^{-1} \Vert \Vert \alpha_n(g) \Vert \\
    &\leq \sqrt{m} \max_{1 \leq i \leq m} \sqrt{Var_P g_i} \Vert F \Vert_{L^2(P)}  \Vert \Sigma_n^{-1} - \Sigma^{-1} \Vert \Vert \alpha_n(g) \Vert =o_p(1).
\end{align*}
For the first term observe that
\begin{align*}
    |(cov_n(g,f)^T  - cov_P(g,f)^T)\Sigma_n^{-1} \alpha_n(g) | &\leq \Vert \mathbb{P}_n fg - Pfg - \mathbb{P}_n f \mathbb{P}_n g \Vert \Vert \Sigma_n^{-1} \Vert \Vert  \alpha_n(g) \Vert \\
    &\leq \sqrt{m}  \max_{1 \leq j \leq m} \Vert \mathbb{P}_n - P \Vert_{\mathcal{F}_j}^* \Vert \Sigma_n^{-1} \Vert \Vert  \alpha_n(g) \Vert \\ &+ |\mathbb{P}_n F | \Vert \mathbb{P}_n g \Vert \Vert \Sigma_n^{-1} \Vert \Vert  \alpha_n(g) \Vert
\end{align*}
where for all $j \in \llbracket 1,m \rrbracket $, $\mathcal{F}_j = \left \{ g_j f, f\in \mathcal{F} \right \}$. 

Since $g$ is integrable with respect to  $P$ then for all $j \in \llbracket 1,m \rrbracket$,  $\{ g_j \}$ is $P$-Glivenko-Cantelli. Moreover $\mathcal{F}$ is $P$-Glivenko-Cantelli since $\mathcal{F}$ is $P$-Donsker. Set for all $(x,y) \in \R^2$, $\phi(x,y) = xy$. Remark that $\mathcal{F}_j = \phi(\{g_j\},\mathcal{F})$. Since $\phi$ is continuous, we deduce that $j \in \llbracket 1,m \rrbracket$, $\mathcal{F}_j$ is a $P$-Glivenko-Cantelli Class. So $$ \max_{1 \leq j \leq m} \Vert \mathbb{P}_n - P \Vert_{\mathcal{F}_j}^* =o_{a.s.}(1). $$
Thus  
$$ \sup_{f \in \mathcal{F}} |(cov_n(g,f)^T \Sigma_n^{-1} - cov_P(g,f)^T \Sigma^{-1}) \alpha_n(g) | =o_p(1).$$
Denote for all $f \in \mathcal{F}$  $$ W_n(f) = \alpha_n(f) - cov_P(g,f)^T \Sigma^{-1} \alpha_n(g).$$
It remains to prove that $W_n$ converges in distribution.
Remark that by the central limit theorem (CLT) 
\begin{align*}
   \begin{pmatrix} \alpha_n(f) \\   \alpha_n(g) \end{pmatrix} \Rightarrow \begin{pmatrix} G(f) \\ G(g) \end{pmatrix} , \ f \in \mathcal{F}
\end{align*}
where $G$ is a $P$-Brownian bridge. Define, for $(f,a,b) \in \mathcal{F} \times \R^2$, $\psi_f(a,b)= (a,cov_P(g,f)^T \Sigma^{-1} b)$. Since the map $\psi_f$ is continuous on $\R^2$, by the continuous mapping theorem we have 
\begin{align*}
   \widetilde{\mathbb{Y}}_n(f) =  \ \begin{pmatrix} \alpha_n(f) \\  cov_P(g,f)^T \Sigma^{-1} \alpha_n(g) \end{pmatrix} \Rightarrow \widetilde{\mathbb{Y}}(f) = \begin{pmatrix} G(f) \\ cov_P(g,f)^T \Sigma^{-1} G(g) \end{pmatrix}, \ f \in \mathcal{F}.
\end{align*}
Likewise for all $ f= (f_1, \cdots, f_k) \in \mathcal{F}^k$, $\left( \widetilde{\mathbb{Y}}_n(f_1), \cdots , \widetilde{\mathbb{Y}}_n(f_k) \right)$ converges in distribution to $\left( \widetilde{\mathbb{Y}}(f_1), \cdots , \widetilde{\mathbb{Y}}(f_k) \right)$. Indeed it suffices to apply the CLT and to consider the following continuous map 
$$ \tilde{\psi}_f(a_1,...,a_{k+1}) = \left(\psi_{f_1}(a_1,a_{k+1}),..., \psi_{f_k}(a_k,a_{k+1})\right), \ (a_1,...,a_{k+1}) \in \R^{k+1}.$$
Applying once again the continuous mapping theorem we deduce 
\begin{align*}
    \left( W_n(f_1), \cdots , W_n(f_k) \right)^T \Rightarrow \left(  G(f_1) - cov_P(g,f_1)^T \Sigma^{-1} G(g), \cdots ,  G(f_k) - cov_P(g,f_k)^T \Sigma^{-1} G(g) \right)^T.
\end{align*}
Let us prove that $W_n \Rightarrow G_I$ in $l^\infty(\mathcal{F})$ with $G_I$ defined at (\ref{pont_brownien_informé}). To this aim, we need the following theorem -- see chapter 1.5 in \cite{wellner}.
\begin{lemma}
Let $X_n : \Omega_n \rightarrow l^\infty(T)$ a sequence of maps. Then these two assertions are equivalents 
\begin{description}
\item[•] We have  
\begin{enumerate}
    \item For all $(t_1,...,t_k) \in T^k$, $(X_n(t_1),...,X_n(t_k))$ converges weakly to a $\mathbb{R}^{k}$ valued random vector, for all  $k \in \N^*$,
    \item  There exists a semimetric $\rho$  such that $(T,\rho)$ is totally bounded and  for all  $ \epsilon >0 $   $$ \lim_{\delta \to 0} \limsup_n \  \mathbb{P}^{*} \left(\sup_{\rho(s,t)<\delta}|X_n(s)-X_n(t)|>\epsilon \right) = 0.$$
\end{enumerate}
\item[•] There exists $X : \Omega \rightarrow l^\infty(T)$ a measurable and tight process such that
$$ X_n \Rightarrow X \ \ \text{in} \ l^\infty(T). $$ 
\end{description}
\end{lemma}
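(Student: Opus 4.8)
This statement is the standard characterization of weak convergence of (possibly non-measurable) maps into $l^\infty(T)$ in terms of marginal convergence plus asymptotic equicontinuity; it is essentially Theorems~1.5.4 and~1.5.7 of \cite{wellner}. The plan is therefore either to quote it directly, or, to keep the paper self-contained, to reproduce the argument, which rests on two ingredients: the Arzela--Ascoli description of the compact (hence asymptotically tight) subsets of $l^\infty(T)$, namely the uniformly bounded sets that are uniformly $\rho$-equicontinuous over some totally bounded semimetric $\rho$; and a Prohorov-type theorem for sequences that are asymptotically tight and asymptotically measurable. Everything is read in terms of the outer probability $\mathbb{P}^*$, as in Chapter~1 of \cite{wellner}.

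For the implication ``$X_n \Rightarrow X$ with $X$ tight and Borel measurable $\Longrightarrow$ (1) and~(2)'' I would argue as follows. Each coordinate projection $z \mapsto (z(t_1),\dots,z(t_k))$ is continuous from $l^\infty(T)$ to $\R^k$, so the continuous mapping theorem gives the finite-dimensional convergence~(1). For~(2), tightness of $X$ forces it to concentrate on a $\sigma$-compact, hence separable, subset of $l^\infty(T)$, and by Arzela--Ascoli this means that, almost surely, the sample paths of $X$ are uniformly continuous for a semimetric $\rho$ — for instance $\rho(s,t) = \mathbb{E}[\,|X(s)-X(t)|\wedge 1\,]$ — making $(T,\rho)$ totally bounded. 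Since $z \mapsto \sup_{\rho(s,t)<\delta}|z(s)-z(t)|$ is lower semicontinuous on $l^\infty(T)$, the portmanteau inequality yields $\limsup_n \mathbb{P}^*(\sup_{\rho(s,t)<\delta}|X_n(s)-X_n(t)|>\veps) \le \mathbb{P}(\sup_{\rho(s,t)<\delta}|X(s)-X(t)|\ge\veps)$, and the right-hand side tends to $0$ as $\delta\to0$ by dominated convergence and the path regularity of $X$; this is~(2).

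For the converse I would first upgrade~(2) to asymptotic tightness of $(X_n)$: given $\eta>0$, pick $\delta$ with $\limsup_n \mathbb{P}^*(\sup_{\rho(s,t)<\delta}|X_n(s)-X_n(t)|>\eta)<\eta$, take a finite $\delta$-net $t_1,\dots,t_k$ of the totally bounded space $(T,\rho)$, and use~(1) at this net to find $M$ with $\limsup_n \mathbb{P}^*(\max_j|X_n(t_j)|>M)<\eta$; off an event of asymptotic probability below $2\eta$ the $X_n$ then lie in a fixed uniformly bounded and $\rho$-equicontinuous set whose closure is compact, and intersecting such sets along a sequence $\eta\downarrow0$ produces asymptotic tightness. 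Next, (1) together with the equicontinuity — which lets one approximate any bounded continuous functional on $l^\infty(T)$ uniformly by a continuous function of finitely many coordinates — makes $(X_n)$ asymptotically measurable, so the Prohorov-type theorem (Theorem~1.3.9 of \cite{wellner}) applies: every subsequence of $(X_n)$ has a further subsequence converging weakly to a tight Borel limit, and by~(1) all such limits share the same finite-dimensional distributions, which, being carried by a separable space, determine a single Borel law $X$; hence $X_n \Rightarrow X$ with $X$ tight and measurable. I expect the main obstacle to be not any individual estimate but the measurability bookkeeping: verifying asymptotic measurability and invoking the correct outer-probability form of Prohorov's theorem, and making rigorous the passage from finite-dimensional convergence to genuine weak convergence through the finite-coordinate approximation, which is really where the argument has content.
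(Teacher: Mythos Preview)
Your proposal is correct, and in fact it goes further than the paper: the paper does not prove this lemma at all but simply quotes it from Chapter~1.5 of \cite{wellner} (it is introduced with ``we need the following theorem --- see chapter~1.5 in \cite{wellner}'') and then uses it as a black box. Your identification of the result as Theorems~1.5.4 and~1.5.7 of \cite{wellner} is accurate, and the proof sketch you give --- continuous mapping for the marginals, Arzel\`a--Ascoli plus portmanteau for the equicontinuity in one direction, and asymptotic tightness plus Prohorov plus determination by finite-dimensional laws in the other --- is the standard argument and is correct in outline. For the purposes of this paper a bare citation suffices; your sketch would only be needed if one wanted the exposition to be self-contained.
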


First verify that a.s  $W_n \in l^\infty(\mathcal{F})$ 
\begin{align*}
    \Vert W_n \Vert_{\mathcal{F}} &\leq 2 \max\left(\Vert \alpha_n \Vert_\mathcal{F}, \sup_{f \in \mathcal{F}} |(Pfg)^T \Sigma^{-1} \alpha_n(g)  |\right) \\ &\leq 2 \max\left(\Vert \alpha_n \Vert_\mathcal{F} ,  \sqrt{m} \max_{1 \leq i \leq m} \sqrt{Var_P g_i} \Vert F \Vert_{L^2(P)}  \Vert  \Sigma^{-1} \Vert \Vert \alpha_n(g) \Vert \right) < + \infty.
\end{align*}

In order to check the second point let introduce the usual semimetric  in $L^2(P)$ defined by  $\rho^2(f_1,f_2) = Var_P \left( f_1 - f_2 \right) $ for all $f_1, f_2 \in L^2(P)$. Since $\mathcal{F}$ is $P$-Donsker then $(\mathcal{F}, \rho)$ is totally bounded. For $\varepsilon > 0$  and $\delta > 0$, we have   
\begin{align*}
     & \limsup_n \ \mathbb{P}^{*}\left(\sup_{\rho(f_1,f_2)<\delta} \displaystyle \mid  W_n(f_1)- W_n(f_2) \mid >  2\epsilon\right) \\ &\leq \limsup_n \ \mathbb{P}^{*}\left(  \sup_{\rho(f_1,f_2)<\delta} \max\left(| \alpha_n(f_1) - \alpha_n(f_2)|, |(P(f_1-f_2)g)^T \Sigma^{-1} \alpha_n(g)) |\right) >\epsilon\right) \\
     &\leq  \limsup_n \ \mathbb{P}^{*}\left(\sup_{\rho(f_1,f_2)<\delta} | \alpha_n(f_1) - \alpha_n(f_2)| >\epsilon\right) \\ &+   \limsup_n \ \mathbb{P}^{*}\left(\sup_{\rho(f_1,f_2)<\delta}  \sqrt{m} \max_{1 \leq i \leq m} \sqrt{Var_P g_i} \   \rho(f_1,f_2)   \Vert \Sigma^{-1} \Vert \Vert \alpha_n(g) \Vert >\epsilon \right) \\
     &\leq \limsup_n \ \mathbb{P}^{*}\left(\sup_{\rho(f_1,f_2)<\delta} | \alpha_n(f_1) - \alpha_n(f_2)| >\epsilon\right) \\ &+   \limsup_n \ \mathbb{P}^{*}\left(  \delta \sqrt{m} \max_{1 \leq i \leq m} \sqrt{Var_P g_i}   \Vert \Sigma^{-1} \Vert \Vert \alpha_n(g)  \Vert >\epsilon\right).
\end{align*}
Since $\mathcal{F}$ is $P$-Donsker, we have
\begin{align*}
    \lim_{\delta \to 0} \limsup_n \ \mathbb{P}^{*}\left(\sup_{\rho(f_1,f_2)<\delta} | \alpha_n(f_1) - \alpha_n(f_2)| >\epsilon \right) = 0.
\end{align*}
By using the fact that
\begin{align*}
      \Vert \alpha_n(g) \Vert \Rightarrow \Vert G(g) \Vert,
\end{align*}
we have by the porte-manteau lemma  
\begin{align*}
    \limsup_n \ \mathbb{P}^{*}\left( \delta \sqrt{m} \max_{1 \leq i \leq m} \sqrt{Var_P g_i}   \Vert \Sigma^{-1} \Vert \Vert \alpha_n(g) >\epsilon \right) &\leq \limsup_n \ \mathbb{P}^{*} \left( \delta \sqrt{m} \max_{1 \leq i \leq m} \sqrt{Var_P g_i}   \Vert \Sigma^{-1} \Vert \Vert \alpha_n(g) \geq \epsilon \right)\\
    &\leq \mathbb{P}^{*} \left(  \delta \sqrt{m} \max_{1 \leq i \leq m} \sqrt{Var_P g_i}   \Vert \Sigma^{-1} \Vert \Vert G(g) \Vert \geq \epsilon \right).
\end{align*}
Since  $ \delta \sqrt{m} \max_{1 \leq i \leq m} \sqrt{Var_P g_i}   \Vert \Sigma^{-1} \Vert \Vert \Vert G(g) \Vert = o_p(1)$, we get 
\begin{align*}
    \lim_{\delta \to 0} \limsup_n \ \mathbb{P}^{*} \left(\sup_{\rho(f_1,f_2)<\delta}\displaystyle \mid  W_n(f_1)- W_n(f_2) \mid >  2\epsilon\right) = 0.
\end{align*}
This establishes that $W_n \Rightarrow G_I $ in $l^\infty(\mathcal{F})$. Thus
\begin{align*}
    \alpha_n^I  \Rightarrow G_I \ \ \text{in} \ \ l^\infty(\mathcal{F}).
\end{align*}
Finally we compute the variance of $G_I(f)$ for $f \in \mathcal{F}$,
\begin{align*}
Var\left(G_I(f)\right) &= Var\left(G(f) - cov_P(g,f)^T \Sigma^{-1} G(g) \right) \\
            &= Var\left(G(f) \right) + Var \left( cov_P(g,f)^T \Sigma^{-1} G(g) \right)  - 2 cov\left(G(f) ,  cov_P(g,f)^T \Sigma^{-1} G(g) \right) \\
            &= Var_P f +  cov_P(g,f)^T \Sigma^{-1} \left( Var_P g \right) \Sigma^{-1} cov_P(g,f) - 2 cov_P(g,f)^T \Sigma^{-1} cov_P(g,f) \\
            &= Var_P f - cov_P(g,f)^T \Sigma^{-1} cov_P(g,f).
\end{align*}
Since $\Sigma^{-1} > 0$, we deduce that $cov_P(g,f)^T \Sigma^{-1} cov_P(g,f) \geq 0$. Thus 
$$ Var(G_I(f)) \leq Var(G(f)).$$

\end{proof}
\subsection{Concentration of the informed empirical process} \label{concentration_section}

Next we show that the informed empirical process is more concentrated than the classical empirical process for all $n$ sufficiently large. Moreover, we prove that the supremum of limit process $G_I $ of Theorem \ref{resultat_Donsker} on a $P$-Donsker class is more concentrated than the supremum of $G$.
By Theorem \ref{resultat_Donsker}, we have for all $f \in L^2(P)$ 
\begin{align*}
    Var \left( G_I(f) \right) \leq Var \left( G(f) \right).
\end{align*}

A first consequence is that for all $f \in L^2(P)$, the informed empirical process $\alpha_n^I(f)$ is more concentrated than $\alpha_n(f)$.
\begin{proposition}
Assume that $\Sigma = Var_P g$ is invertible and let $f \in L^2(P)$. Then for any $\lambda > 0$

\begin{align*}
    \mathbb{P}\left( |G_I(f) | > \lambda \right) \leq \mathbb{P}\left( |G(f) | > \lambda \right),
\end{align*}
and, if moreover $cov_P(g,f) \neq 0 $ then  
\begin{align*}
    \mathbb{P}\left( |G_I(f) | > \lambda \right) < \mathbb{P}\left( |G(f) | > \lambda \right).
\end{align*}
In addition there exists $N > 0$ such that for all $n \geq N$ it holds
\begin{align*}
    \mathbb{P}\left( |\alpha_{n}^I(f) | > \lambda \right) < \mathbb{P}\left( |\alpha_{n}(f) | > \lambda \right).
\end{align*}
\end{proposition}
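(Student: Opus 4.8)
The plan is to reduce both claims to the one‑dimensional Gaussian comparison provided by Theorem~\ref{resultat_Donsker}. Applying that theorem to the finite (hence $P$-Donsker) class $\{f\}$, with measurable envelope $|f|\in L^2(P)$, we know that $G(f)$ and $G_I(f)$ are centered Gaussian variables with
\[
Var\!\left(G(f)\right)=Var_P f=:\sigma^2,\qquad Var\!\left(G_I(f)\right)=Var_P f-cov_P(g,f)^T\Sigma^{-1}cov_P(g,f)=:\sigma_I^2,
\]
and, since $\Sigma^{-1}$ is positive definite, $\sigma_I^2\le\sigma^2$, with $\sigma_I^2<\sigma^2$ precisely when $cov_P(g,f)\neq 0$. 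In the latter case one moreover has $\sigma^2>0$ automatically, because $cov_P(g_j,f)^2\le\sigma^2\,Var_P g_j$ by Cauchy--Schwarz.

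Next I would record the elementary fact that, for fixed $\lambda>0$, the function $t\mapsto\mathbb{P}\big(|\mathcal{N}(0,t)|>\lambda\big)$ is nondecreasing on $[0,+\infty)$, and in fact strictly increasing: it equals $0$ at $t=0$ and $2\big(1-\Phi(\lambda/\sqrt t\,)\big)$ for $t>0$, which is strictly increasing because $\Phi$ is strictly increasing and $t\mapsto\lambda/\sqrt t$ strictly decreasing. Evaluating at $t=\sigma_I^2\le\sigma^2$ yields $\mathbb{P}(|G_I(f)|>\lambda)\le\mathbb{P}(|G(f)|>\lambda)$, and evaluating at $\sigma_I^2<\sigma^2$ (when $cov_P(g,f)\neq 0$) yields the strict inequality. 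This disposes of the first two assertions.

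For the statement about $\alpha_n^I$ and $\alpha_n$ I would use that $\alpha_n^I(f)\Rightarrow G_I(f)$ and $\alpha_n(f)\Rightarrow G(f)$ — a direct consequence of Theorem~\ref{resultat_Donsker} for the class $\{f\}$, or, more elementarily, of the expansion $\alpha_n^I(f)=\alpha_n(f)-cov_P(g,f)^T\Sigma^{-1}\alpha_n(g)+o_p(1)$ obtained in its proof together with the CLT and Slutsky's lemma. By the continuous mapping theorem, $|\alpha_n^I(f)|\Rightarrow|G_I(f)|$ and $|\alpha_n(f)|\Rightarrow|G(f)|$. The only point requiring care is that, under the hypothesis $cov_P(g,f)\neq 0$ (under which this last strict inequality is asserted), one has $\sigma^2>0$, so $|G(f)|$ has an atomless law, while $|G_I(f)|$ is either atomless ($\sigma_I^2>0$) or a Dirac mass at $0$ ($\sigma_I^2=0$) — in both cases without atom at $\lambda>0$. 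Hence $\{x:|x|>\lambda\}$ is a continuity set for both limit laws, so by the portmanteau theorem $\mathbb{P}(|\alpha_n^I(f)|>\lambda)\to\mathbb{P}(|G_I(f)|>\lambda)$ and $\mathbb{P}(|\alpha_n(f)|>\lambda)\to\mathbb{P}(|G(f)|>\lambda)$; since these two limits are strictly ordered by the previous paragraph, there is $N$ such that $\mathbb{P}(|\alpha_n^I(f)|>\lambda)<\mathbb{P}(|\alpha_n(f)|>\lambda)$ for all $n\ge N$. No substantial obstacle is expected; the only delicate points are the degenerate case $\sigma_I^2=0$ and keeping track of the assumption $cov_P(g,f)\neq 0$ needed for the strict finite-sample comparison.
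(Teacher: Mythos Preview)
Your argument is correct and follows essentially the same route as the paper: compare the Gaussian tail probabilities via the variance inequality $\sigma_I^2\le\sigma^2$ from Theorem~\ref{resultat_Donsker}, then pass to the finite-sample statement by weak convergence. If anything, you are more careful than the paper, which does not explicitly verify that $\lambda$ is a continuity point of the limit laws nor treat the degenerate case $\sigma_I^2=0$.
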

\begin{proof}
Let $ \lambda > 0 $. By Theorem \ref{resultat_Donsker} and Central Limit Theorem (CLT) we have 
\begin{align*}
    &\mathbb{P}\left( |\alpha_{n}^I(f) | > \lambda \right) \xrightarrow[n \to \infty]{}  \mathbb{P}\left( |G_I(f) | > \lambda \right) , \\
    &\mathbb{P}\left( |\alpha_{n}(f) | > \lambda \right) \xrightarrow[n \to \infty]{}  \mathbb{P}\left( |G(f) | > \lambda \right).
\end{align*}
Denote $\sigma_1 := \sqrt{Var G_I(f)} <\sqrt{Var G(f)} =: \sigma_2  $. Observe that  
\begin{align*}
    \mathbb{P}\left( |G_I(f) | > \lambda \right) = \mathbb{P}\left( | \mathcal{N}(0,1) | > \frac{\lambda}{\sigma_1} \right) < \mathbb{P}\left( | \mathcal{N}(0,1) | > \frac{\lambda}{\sigma_2} \right) = \mathbb{P}\left( |G(f) | > \lambda \right).
\end{align*}
So there exists  $N > 0$ such that for all  $ n \geq N$  
\begin{align*}
    \mathbb{P}\left( |\alpha_{n}^I(f) | > \lambda \right) < \mathbb{P}\left( |\alpha_{n}(f) | > \lambda \right).
\end{align*}
\end{proof}

The next step is to extend this result to a $P$-Donsker class $\mathcal{F}$. Recall that $\mathcal{F}$ is a pointwise separable class -- see \cite{wellner} for more details -- if there exists a countable subset $\mathcal{G} \subset \mathcal{F}$ such that for each $n \in \N^*$ there is a $P^n$-null set $N_n \subset \mathcal{X}^n$ such that for all $(x_1, \cdots, x_n) \notin N_n$ and $f \in \mathcal{F}$, there exists a sequence $(h_k)_{k \in \N^*} \subset \mathcal{G}$ such that $h_k \xrightarrow[k \to \infty]{} f$ in $L^2(P)$ and $(h_k(x_1), \cdots, h_k(x_n)) \xrightarrow[k \to \infty]{} (f(x_1), \cdots, f(x_n))$.
\begin{proposition}\label{thmsup}
Assume that $\Sigma = Var_P g$ is invertible and let $\mathcal{F} $ be a $P$-Donsker class. Suppose that $\mathcal{F}$ is pointwise separable. Then for all $\lambda > 0$  
\begin{align*}
    \mathbb{P}\left( \sup_{f \in \mathcal{F}} |G_I(f)| > \lambda \right) \leq \mathbb{P}\left( \sup_{f \in \mathcal{F}} |G(f)| > \lambda \right).
\end{align*}
Moreover if there exists $\lambda > 0$ such that  $\mathbb{P}\left( \sup_{f \in \mathcal{F}} |G_I(f)| > \lambda \right) < \mathbb{P}\left( \sup_{f \in \mathcal{F}} |G(f)| > \lambda \right)$ then there exists $N > 0$ such that for all $n \geq N$ 
\begin{align*}
    \mathbb{P}\left( \sup_{f \in \mathcal{F}} |\alpha_{n,I}(f)| > \lambda \right) < \mathbb{P}\left( \sup_{f \in \mathcal{F}} |\alpha_n(f)| > \lambda \right).
\end{align*}
\end{proposition}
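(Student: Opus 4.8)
The plan is to use the explicit representation $G_I(f)=G(f)-cov_P(g,f)^T\Sigma^{-1}G(g)$ from Theorem \ref{resultat_Donsker} together with Anderson's inequality, after reducing to a countable index set. First I would use pointwise separability to pick a countable $\mathcal{G}\subset\mathcal{F}$ with $\sup_{f\in\mathcal{F}}|G(f)|=\sup_{f\in\mathcal{G}}|G(f)|$ and $\sup_{f\in\mathcal{F}}|G_I(f)|=\sup_{f\in\mathcal{G}}|G_I(f)|$ almost surely, and similarly $\Vert\alpha_n\Vert_\mathcal{F}^*=\sup_{f\in\mathcal{G}}|\alpha_n(f)|$, $\Vert\alpha_n^I\Vert_\mathcal{F}^*=\sup_{f\in\mathcal{G}}|\alpha_n^I(f)|$ almost surely, so that all suprema are genuinely measurable and one may work in $l^\infty(\mathcal{G})$; since $G$ and $G_I$ are tight by Theorem \ref{resultat_Donsker}, they are realized as centered Gaussian random elements of a separable closed subspace $E\subset l^\infty(\mathcal{G})$. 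The crucial observation is that $cov(G_I(f),G(g))=cov_P(g,f)^T-cov_P(g,f)^T\Sigma^{-1}\Sigma=0$ for every $f$, so by joint Gaussianity the process $(G_I(f))_{f\in\mathcal{G}}$ is independent of the vector $G(g)\in\R^m$.

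For the first inequality, fix $\lambda>0$, set $K=\{z\in E:\sup_{f\in\mathcal{G}}|z(f)|\le\lambda\}$, which is convex and symmetric, and let $\nu$ be the law of $(G_I(f))_{f\in\mathcal{G}}$ on $E$. For $h\in\R^m$ put $a_h=(cov_P(g,f)^T\Sigma^{-1}h)_{f\in\mathcal{G}}$; this lies in $l^\infty(\mathcal{G})$ because $|cov_P(g,f)^T\Sigma^{-1}h|\le\sqrt m\,\Vert\Sigma^{-1}\Vert\,\Vert h\Vert\,\max_j\sqrt{Var_Pg_j}\,\sup_{f\in\mathcal{F}}\sqrt{Var_Pf}$ and $\sup_{f\in\mathcal{F}}\sqrt{Var_Pf}<\infty$ since $(\mathcal{F},\rho)$ is totally bounded. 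Conditionally on $G(g)=h$, the independence above gives that $(G(f))_{f\in\mathcal{G}}=(G_I(f)+a_h(f))_{f\in\mathcal{G}}$ has law $\nu(\cdot-a_h)$, so Anderson's inequality for centered Gaussian measures on a separable Banach space yields $\nu(K-a_h)\le\nu(K)$. Integrating over the law of $G(g)$,
\begin{align*}
\mathbb{P}\Bigl(\sup_{f\in\mathcal{F}}|G(f)|\le\lambda\Bigr)=\int_{\R^m}\nu(K-a_h)\,d\mathbb{P}_{G(g)}(h)\le\nu(K)=\mathbb{P}\Bigl(\sup_{f\in\mathcal{F}}|G_I(f)|\le\lambda\Bigr),
\end{align*}
which is equivalent to the claimed inequality $\mathbb{P}(\sup_\mathcal{F}|G_I|>\lambda)\le\mathbb{P}(\sup_\mathcal{F}|G|>\lambda)$.

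For the ``moreover'' part, assume strict inequality holds for some $\lambda>0$. By Theorem \ref{resultat_Donsker} one has $\alpha_n^I\Rightarrow G_I$ in $l^\infty(\mathcal{F})$, and by the classical Donsker theorem $\alpha_n\Rightarrow G$ in $l^\infty(\mathcal{F})$. Since $z\mapsto\sup_{f\in\mathcal{F}}|z(f)|$ is $1$-Lipschitz, the continuous mapping theorem (for the Hoffmann--Jørgensen notion of weak convergence) gives $\Vert\alpha_n^I\Vert_\mathcal{F}^*\Rightarrow\sup_\mathcal{F}|G_I|$ and $\Vert\alpha_n\Vert_\mathcal{F}^*\Rightarrow\sup_\mathcal{F}|G|$ in $\R$. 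A non-degenerate centered Gaussian random element of a separable Banach space has a norm whose distribution function is continuous on $(0,\infty)$, so $\lambda$ is a continuity point of both limits (and if $G_I\equiv0$ the left-hand probability is simply $0$); hence $\mathbb{P}^*(\sup_\mathcal{F}|\alpha_n^I|>\lambda)\to\mathbb{P}(\sup_\mathcal{F}|G_I|>\lambda)$ and $\mathbb{P}^*(\sup_\mathcal{F}|\alpha_n|>\lambda)\to\mathbb{P}(\sup_\mathcal{F}|G|>\lambda)$. The strict inequality between the two limits forces $\mathbb{P}(\sup_\mathcal{F}|\alpha_n^I|>\lambda)<\mathbb{P}(\sup_\mathcal{F}|\alpha_n|>\lambda)$ for all $n$ large enough.

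The main obstacle is the first step: Anderson's inequality is a statement about Gaussian measures on \emph{separable} Banach spaces while $l^\infty(\mathcal{F})$ is not separable, so the pointwise-separability hypothesis is exactly what allows one to pass to a countable $\mathcal{G}$ and realize $G_I$ and $G$ as Gaussian elements of a separable space; once this is in place, the conditioning/independence bookkeeping and the application of Anderson to a translate $K-a_h$ of the symmetric convex ball are routine. A secondary technical point is the continuity of the distribution function of $\sup_{f\in\mathcal{F}}|G_I(f)|$ at $\lambda$, which is what lets one upgrade the weak convergences above to convergence of the tail probabilities.
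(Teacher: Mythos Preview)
Your proof is correct and takes a genuinely different route from the paper. The paper extends the index set to $\widetilde{\mathcal{F}}=\mathcal{F}\cup(-\mathcal{F})$ so that $\sup_{\mathcal{F}}|G_I|=\sup_{\widetilde{\mathcal{F}}}G_I$ and $\sup_{\mathcal{F}}|G|=\sup_{\widetilde{\mathcal{F}}}G$, verifies the increment comparison $\mathbb{E}(G_I(f)-G_I(h))^2\le\mathbb{E}(G(f)-G(h))^2$ (immediate from the variance reduction in Theorem~\ref{resultat_Donsker} applied to $f-h$), and then invokes the Slepian--Fernique--Marcus--Shepp inequality for suprema of Gaussian processes. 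You instead exploit the explicit algebraic structure: from $cov(G_I(f),G(g))=0$ you obtain independence of the process $G_I$ and the vector $G(g)$, write $G=G_I+a_{G(g)}$ as $G_I$ plus an independent random translate, and apply Anderson's inequality to the symmetric convex ball $\{\Vert\cdot\Vert_\infty\le\lambda\}$ conditionally on $G(g)$. The Slepian route is more ``off the shelf'' once the variance inequality is available; your route makes transparent \emph{why} the inequality holds (translating a centered Gaussian measure can only push mass out of a symmetric convex set) and would apply verbatim to any symmetric convex functional of the process, not only the sup norm. A small point worth recording in your write-up: since both $G$ and $G_I$ take values in the separable subspace $E$ almost surely, the difference $a_{G(g)}=G-G_I$ lies in $E$ almost surely, and linearity of $h\mapsto a_h$ then gives $a_h\in E$ for every $h\in\R^m$, so Anderson is indeed being applied inside a separable Banach space. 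For the second assertion both arguments proceed by weak convergence and the continuous mapping theorem; you are in fact more careful than the paper in pointing out that one needs $\lambda$ to be a continuity point of the limiting distributions, and in invoking the continuity on $(0,\infty)$ of the law of the norm of a nondegenerate Gaussian element to secure this.
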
 

\begin{proof}
To prove this proposition, we need the following result. % -- in \cite{wellner} at the page $441$.
\begin{lemma}\textit{(Slepian, Fernique, Marcus, Shepp)} \label{lemme_gaussien}

Let $X$ and $Y$ be separable, mean-zero Gaussian processes indexed by a common index set $T$ such that
\begin{align*}
    \mathbb{E}\left(X_s - X_t \right)^2 \leq \mathbb{E}\left(Y_s - Y_t \right)^2 \ \ \text{for all} \ s,t \in T.
\end{align*}

Then for all $\lambda > 0$  
\begin{align*}
    \mathbb{P}\left( \sup_{t \in T} X_t > \lambda \right) \leq \mathbb{P}\left( \sup_{t \in T} Y_t > \lambda \right).
\end{align*}
\end{lemma}
Notice that $G$ and $G_I$ are two mean-zero Gaussian processes and we can take a separable version of these processes. Set $\widetilde{\mathcal{F}} = \mathcal{F} \cup(-\mathcal{F})$ and observe that 
\begin{align*}
    \sup_{f \in \mathcal{F}} |G_I(f)|  &= \sup_{f \in \mathcal{F}} \max \left( G_I(f), - G_I(f) \right) = \sup_{f \in \mathcal{F}} \max \left( G_I(f),  G_I(-f) \right) = \sup_{f \in \widetilde{\mathcal{F}}} G_I(f).
\end{align*}
Similarly $\sup_{f \in \mathcal{F}} |G(f)| =\sup_{f \in \widetilde{\mathcal{F}}} G(f)$. Remark that for all $f,h \in \widetilde{\mathcal{F}}$ 
\begin{align*}
    \mathbb{E}\left(G_I(f) - G_I(h)) \right)^2 &= \mathbb{E}\left(G_I(f - h)) \right)^2  = Var\left(G_I(f - h)\right) \leq Var\left(G(f - h)\right) = \mathbb{E}\left(G(f) - G(h)) \right)^2.
\end{align*}
By Lemma \ref{lemme_gaussien} 
\begin{align*}
     \mathbb{P}\left( \sup_{f \in \mathcal{F}} |G_I(f)| > \lambda \right) =  \mathbb{P}\left( \sup_{f \in \widetilde{\mathcal{F}}} G_I(f) > \lambda \right) \leq \mathbb{P}\left( \sup_{f \in \widetilde{\mathcal{F}}} G(f) > \lambda \right) = \mathbb{P}\left( \sup_{f \in \mathcal{F}} |G(f)| > \lambda \right).
\end{align*}
Assume that there exists $\lambda > 0$ such that  
\begin{align*}
    \mathbb{P}\left( \sup_{f \in \widetilde{\mathcal{F}}} G_I(f) > \lambda \right) < \mathbb{P}\left( \sup_{f \in \widetilde{\mathcal{F}}} G(f) > \lambda \right).
\end{align*}
Observe that the map $X \mapsto \Vert X \Vert_{\mathcal{F}} = \sup_{f \in \mathcal{F}} |X(f)|$ for every $X \in l^\infty(\mathcal{F})$ is continuous. Since $\mathcal{F}$ is pointwise separable, there is no problem with measurability. By Theorem \ref{resultat_Donsker}, we deduce that there exists $N > 0$ such that for all $ n \geq N$  
\begin{align*}
    \mathbb{P}\left( \Vert \alpha_{n,I} \Vert_{\mathcal{F}} > \lambda \right) = \mathbb{P}\left( \sup_{f \in \mathcal{F}} | \alpha_{n,I}(f)| > \lambda \right) < \mathbb{P}\left( \sup_{f \in \mathcal{F}} | \alpha_{n}(f)| > \lambda \right) = \mathbb{P}\left( \Vert \alpha_{n} \Vert_{\mathcal{F}}> \lambda \right).
\end{align*}

\end{proof}

As a corollary we immediately get that the quantiles of $\sup_{f \in \mathcal{F}} |G(f)|$ and $\sup_{f \in \mathcal{F}} |G_I(f)|$ are also ordered. Denote  $F_1$ (resp. $F_2$) the cumulative distribution function of $\sup_{f \in \mathcal{F}} |G(f)|$ (resp. $\sup_{f \in \mathcal{F}} |G_I(f)|$) and, for $ i \in  \llbracket 1,2 \rrbracket $,  $F_i^{-1}(\alpha) =\inf \left\{ t \in \R_+, \ F_i(t) \geq \alpha  \right\}$. 
\begin{corollary}
Assume that $\Sigma = Var_P g$ is invertible and let $\mathcal{F}$ be a $P$-Donsker class. Suppose that $\mathcal{F}$ is pointwise separable. Then, for all $\alpha \in [0,1[$ it holds $ F_2^{-1}(\alpha) \leq F_1^{-1}(\alpha)$.
\end{corollary}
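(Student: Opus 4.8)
The plan is to read off the statement directly from Proposition \ref{thmsup}, using only the elementary fact that a pointwise inequality between distribution functions reverses under taking generalized inverses. First I would record that Proposition \ref{thmsup} asserts, for every $\lambda > 0$,
\[
    \mathbb{P}\left( \sup_{f \in \mathcal{F}} |G_I(f)| > \lambda \right) \leq \mathbb{P}\left( \sup_{f \in \mathcal{F}} |G(f)| > \lambda \right),
\]
which is precisely $1 - F_2(\lambda) \leq 1 - F_1(\lambda)$, i.e. $F_1(\lambda) \leq F_2(\lambda)$ for all $\lambda > 0$. Since $F_1$ and $F_2$ are right-continuous, letting $\lambda \downarrow 0$ yields $F_1(0) \leq F_2(0)$ as well, so $F_1 \leq F_2$ on all of $\R_+$ (and on the negative axis both vanish, although only $\R_+$ is relevant since $\sup_f |G(f)|$ and $\sup_f |G_I(f)|$ are nonnegative).

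Next I would invoke the standard monotonicity of the generalized inverse. Fixing $\alpha \in [0,1[$, the inequality $F_1 \leq F_2$ gives the inclusion $\{ t \in \R_+ : F_1(t) \geq \alpha \} \subseteq \{ t \in \R_+ : F_2(t) \geq \alpha \}$, and taking the infimum of the larger set can only decrease it, hence $F_2^{-1}(\alpha) \leq F_1^{-1}(\alpha)$.

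The one point requiring a line of justification is that these infima are over nonempty sets, so that $F_1^{-1}(\alpha)$ and $F_2^{-1}(\alpha)$ are genuine real numbers for $\alpha \in [0,1[$: because $\mathcal{F}$ is $P$-Donsker, the $P$-Brownian bridge $G$ has a.s. bounded sample paths on the totally bounded set $(\mathcal{F},\rho)$, and $G_I$ is a tight process on $l^\infty(\mathcal{F})$ by Theorem \ref{resultat_Donsker}, so $\sup_f |G(f)| < \infty$ and $\sup_f |G_I(f)| < \infty$ almost surely; consequently $F_i(t) \to 1$ as $t \to \infty$ and $\{ t \in \R_+ : F_i(t) \geq \alpha \} \neq \emptyset$ for every $\alpha < 1$. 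I do not expect any real obstacle: the corollary is an immediate consequence of Proposition \ref{thmsup}, with all the substance located in that proposition's comparison of the incremental variances via the Slepian--Fernique inequality.
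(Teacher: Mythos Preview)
Your proof is correct and follows exactly the approach the paper intends: the paper states the corollary as an immediate consequence of Proposition \ref{thmsup} without giving a separate proof, and you have simply filled in the elementary step that $F_1 \leq F_2$ pointwise implies $F_2^{-1} \leq F_1^{-1}$ via the inclusion of level sets. The extra care you take with right-continuity at $0$ and with nonemptiness of the level sets is a welcome bit of rigor beyond what the paper records.
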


This corollary has many interesting applications. For instance, it can be used in order to improve Kolmogorov-Smirnov test -- see \cite{albertus} at the page $34$ for an auxiliary information given by a partition of $\mathcal{X}$.

\section{Informed empirical quantiles} \label{section_quantile}
As an illustration let consider the informed estimator of a single quantile built from $\mathbb{P}_n^I$ in the case $P$ is a real probability measure. Standard empirical process methods could be applied to extend this estimation uniformly on compact sets of $]0,1[$ -- and on $]0,1[$ under additional assumptions on the regularity and rate of decay of the density in tails. We would obtain similar results and same limiting process as in \cite{zhang_quantile} where the quantile process based on the probability measure $\mathbb{P}_n^{(1)}$ of (\ref{mesure_vraisemblance}) is shown to converge in the appropriated topology. Proposition \ref{Approximation} suggests that similar results are also valid for the quantiles derived from the unusual $\mathbb{P}_n^{(2)}$.

Denote $F$ the cumulative distribution function of $P$ and $\mathbb{F}_n$ the empirical distribution function. Assume that $Pg = 0$. %Denote $\mathbb{P}_n^I$ the informed empirical measure. 
We can define for all $t \in \R$ the following function  
\begin{align*}
    \mathbb{F}_{n,I}(t) = \mathbb{P}_n^I 1_{\cdot \leq t} = \sum_{i=1}^n p_i 1_{X_i \leq t}.
\end{align*}
This function is called the informed empirical distribution function since $\mathbb{P}_n^I$ is a probability measure almost surely for $n$ sufficiently large -- see Proposition \ref{mesure_proba}. 

Let us first compare $\mathbb{F}_{n,I}$, $\mathbb{F}_n$ and $F$ through a simulation. For that, we generate $n=100$ random variables i.i.d. with distribution $P=\mathcal{N}(0,1)$ and we incorporate the auxiliary information $I$ given by the function $g(x) = (x,x^2)$ defined for all $x \in \R$.  We remark that $\mathbb{F}_{n,I}$ is closer of $F$ than $\mathbb{F}_n$. 

\begin{figure}[h]
\begin{center}
\includegraphics[scale = 0.5]{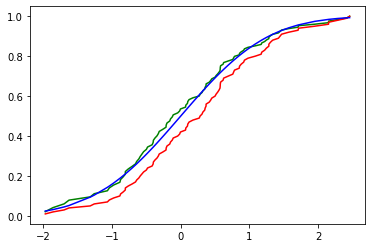} 
\qquad
\includegraphics[scale = 0.5]{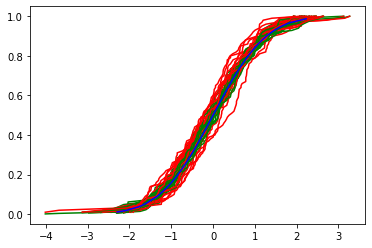}
\caption{ Comparison between $\mathbb{F}_{n,I}$ (green), $\mathbb{F}_{n}$ (red) and $F$ (blue). The number of experiments is $1$ in the left hand graph and $20$ in the right hand graph. }
\label{Figure}
\end{center}
\end{figure}

Given $\alpha \in ]0,1[$, we estimate the $\alpha$-quantile $q_\alpha= F^{-1} (\alpha) = \inf \left\{ t \in \R, \ F(t) \geq \alpha \right\}$ of $P$ by
\begin{align*}
    q_{n,\alpha}^I = \mathbb{F}_{n,I}^{-1} (\alpha) = \inf \left\{ t \in \R, \ \mathbb{F}_{n,I}(t) \geq \alpha \right\}.
\end{align*}
Clearly $q_{n,\alpha}^I$ is well defined since for all $t \in \left]- \infty, \min_{1 \leq i \leq n} X_i \right[, \ F_{n,I}(t) = 0$ and for all  $t \in \left[ \max_{1 \leq i \leq n} X_i, + \infty \right[,  \ F_{n,I}(t) = 1$. The estimator $q_{n,\alpha}^I$ is called the informed empirical $\alpha$-quantile.

In order to asymptotically control $q_{n,\alpha}^I$ let apply the classical delta method.
\begin{lemma} \label{lemme_cadlag}
Let $\mathcal{S} =  D(\R) \cap l^\infty(\R)$ be the set of bounded càdlàg function defined on $\R$ equipped with the uniform norm. Let $\alpha \in ]0,1[$ and $\phi_\alpha : D_\alpha \subset \mathcal{S} \to \R $ be the map defined by  $\phi_\alpha(F) = F^{-1}(\alpha)$ for all $F \in D_\alpha$ with $D_\alpha$ a domain of $\phi_\alpha$ which contains the set of all cumulative distribution function on $\R$ and $\mathbb{F}_{n,I}$ for all $n \in \N^*$, all $(X_n(\omega))_{n \in \N^*}$ and all $\omega\in\Omega$. Let $F$ be a cumulative distribution function.
\begin{itemize}
    \item[$\bullet$] If $F$ is strictly increasing at $F^{-1}(\alpha)$ then  $\phi_\alpha$ is continuous at $F$. 
    \item[$\bullet$] Assume that $F$ is differentiable at $F^{-1}(\alpha)$ and $F^\prime(F^{-1}(\alpha)) = f(F^{-1}(\alpha)) > 0$. Then $\phi_\alpha$ is Hadamard-differentiable in $F$ tangentially to  $$D_0 = \{ h \in \mathcal{S} , \ h \   \text{continue en} \ F^{-1}(\alpha) \}.$$ 
    Moreover
    \begin{align*}
        \phi_\alpha^\prime(h) = - \frac{h(\phi_\alpha(F))}{F^\prime({\phi(F)})} = - \frac{h(F^{-1}(\alpha))}{f(F^{-1}(\alpha))}.
    \end{align*}
\end{itemize} 
\end{lemma}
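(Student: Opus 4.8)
The statement is the classical delta method for the quantile functional (see \cite{wellner}, Lemma 3.9.23, or the corresponding statement in \cite{zhang_quantile}), and I would prove it directly from the definition of the generalized inverse, without invoking any monotonicity of the perturbed functions — only the fact that $F$, being a cumulative distribution function, is nondecreasing on all of $\R$. Throughout write $\xi = \phi_\alpha(F) = F^{-1}(\alpha)$. The two elementary facts I would record first are: for any $G \in \mathcal{S}$ and any real $u$, $G(u) \geq \alpha$ implies $\phi_\alpha(G) \leq u$; and $\phi_\alpha(G) < u$ implies $G(s) \geq \alpha$ for some $s < u$. I would also note that since $\xi = \inf\{t : F(t)\geq\alpha\}$, one has $F(t) < \alpha$ for every $t < \xi$, with no extra hypothesis.

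For the first item, let $G_n \to F$ in $\Vert\cdot\Vert_\infty$ with $G_n \in D_\alpha$, and fix $\varepsilon > 0$. Strict increase of $F$ at $\xi$ gives $c := \min\bigl(\alpha - F(\xi-\varepsilon),\, F(\xi+\varepsilon)-\alpha\bigr) > 0$. For $n$ large enough that $\Vert G_n - F\Vert_\infty < c$, monotonicity of $F$ yields $G_n(t) \leq F(\xi-\varepsilon) + c < \alpha$ for all $t \leq \xi - \varepsilon$, hence $\phi_\alpha(G_n) \geq \xi - \varepsilon$; and $G_n(\xi+\varepsilon) \geq F(\xi+\varepsilon) - c > \alpha$, hence $\phi_\alpha(G_n) \leq \xi + \varepsilon$. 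Letting $\varepsilon \to 0$ gives $\phi_\alpha(G_n) \to \xi$, i.e.\ continuity of $\phi_\alpha$ at $F$.

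For the second item, take $t_n \to 0$ with $t_n \neq 0$, $h_n \to h$ in $\mathcal{S}$ with $h$ continuous at $\xi$, and $F + t_n h_n \in D_\alpha$; set $\xi_n = \phi_\alpha(F + t_n h_n)$ and $\eta_n = (\xi_n - \xi)/t_n$. I would treat $t_n > 0$, the case $t_n < 0$ being symmetric (apply the result to $-h_n$). Fix $\varepsilon > 0$ and pick $\delta > 0$ with $|h(s) - h(\xi)| < \tfrac12 f(\xi)\varepsilon$ for $|s-\xi| < \delta$. For the \emph{upper bound}, put $u_n = \xi + t_n(\varepsilon - h(\xi)/f(\xi))$; then $u_n - \xi = o(1)$, so differentiability of $F$ at $\xi$ gives $F(u_n) = \alpha + t_n f(\xi)\varepsilon - t_n h(\xi) + o(t_n)$, while $h_n(u_n) \to h(\xi)$ by uniform convergence and continuity of $h$ at $\xi$, so $(F + t_n h_n)(u_n) = \alpha + t_n f(\xi)\varepsilon + o(t_n) \geq \alpha$ eventually, giving $\xi_n \leq u_n$, i.e.\ $\eta_n \leq \varepsilon - h(\xi)/f(\xi)$. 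For the \emph{lower bound}, put $l_n = \xi - t_n(\varepsilon + h(\xi)/f(\xi))$ and suppose $\xi_n < l_n$ along a subsequence; then there is $s_n < l_n$ with $(F+t_n h_n)(s_n) \geq \alpha$. If $s_n \leq \xi - \delta$, then $F(s_n) \leq F(\xi-\delta) = \alpha - c_0$ with $c_0 > 0$ fixed, so $(F+t_n h_n)(s_n) \leq \alpha - c_0 + |t_n|\Vert h_n\Vert_\infty < \alpha$ for $n$ large, a contradiction; hence $s_n \in (\xi-\delta, l_n)$, where $F(s_n) \leq F(l_n) = \alpha - t_n f(\xi)\varepsilon - t_n h(\xi) + o(t_n)$ and $t_n h_n(s_n) \leq t_n h(\xi) + \tfrac12 t_n f(\xi)\varepsilon + o(t_n)$, so again $(F+t_n h_n)(s_n) \leq \alpha - \tfrac12 t_n f(\xi)\varepsilon + o(t_n) < \alpha$ for $n$ large, a contradiction. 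Thus $\eta_n \geq -\varepsilon - h(\xi)/f(\xi)$ eventually. Combining the two bounds gives $|\eta_n + h(\xi)/f(\xi)| \leq \varepsilon$ for all large $n$, and since $\varepsilon$ is arbitrary, $\eta_n \to -h(\xi)/f(\xi)$; this is precisely Hadamard differentiability of $\phi_\alpha$ at $F$ tangentially to $D_0$ with derivative $\phi_\alpha'(h) = -h(F^{-1}(\alpha))/f(F^{-1}(\alpha))$.

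The main obstacle is purely the bookkeeping of remainders: one must verify that the $o(t_n)$ errors produced by the differentiability of $F$ and by $\Vert h_n - h\Vert_\infty \to 0$ are genuinely negligible against the linear term $t_n f(\xi)\varepsilon$, and that in the lower-bound step the case where the witness $s_n$ lies far to the left of $\xi$ is correctly excluded using only $F(t) < \alpha$ for $t < \xi$ together with monotonicity of $F$. No monotonicity of the perturbations $F + t_n h_n$ is needed, which is what makes the argument applicable to $\mathbb{F}_{n,I}$ regardless of the sign of the weights $p_i$.
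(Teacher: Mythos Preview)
Your proof is correct, and on both items it is actually more complete than the paper's own argument. For the first bullet the paper gives essentially the same $\varepsilon$-sandwich you do, but it only checks $F_n(\xi-\varepsilon)<\alpha$ at the single point $\xi-\varepsilon$ before concluding $F_n^{-1}(\alpha)\geq \xi-\varepsilon$; your version is more careful in that you use the monotonicity of $F$ (not of $G_n$) to force $G_n(t)<\alpha$ for \emph{every} $t\leq\xi-\varepsilon$, which is what is really required when the approximating functions in $D_\alpha$ need not be nondecreasing---precisely the situation for $\mathbb{F}_{n,I}$, whose weights $p_i$ can be negative. One cosmetic slip there: the correct chain is $G_n(t)<F(\xi-\varepsilon)+c\leq\alpha$ rather than $G_n(t)\leq F(\xi-\varepsilon)+c<\alpha$, but the conclusion $G_n(t)<\alpha$ is unaffected.

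For the second bullet the paper gives no argument at all and simply defers to \cite{wellner}. Your direct derivation---sandwiching $\eta_n$ between $-\varepsilon-h(\xi)/f(\xi)$ and $\varepsilon-h(\xi)/f(\xi)$ via the test points $u_n,l_n$, and ruling out far-left witnesses $s_n\leq\xi-\delta$ through the blanket bound $F(s_n)\leq F(\xi-\delta)=\alpha-c_0$---is exactly the standard proof behind that citation, and your explicit remark that no monotonicity of $F+t_nh_n$ is used is the point that makes the lemma applicable here. The reduction of $t_n<0$ to the positive case via $-h_n$ is fine once one passes to subsequences of constant sign.
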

\begin{remark}
Remark that we can take $D_\alpha =  \left\{ F \in \mathcal{S}, \  \lim_{t \to - \infty} F(t) = 0 , \ \lim_{t \to +\infty} F(t) = 1 \right\}$.
\end{remark}
\begin{proof}
The proof of the second assertion can be found in \cite{wellner} for instance. About the first assertion, let  $\varepsilon > 0$. Since $F$ is strictly increasing at $F^{-1}(\alpha)$ we have
\begin{align*}
    F(F^{-1}(\alpha) - \varepsilon ) < \alpha < F(F^{-1}(\alpha) + \varepsilon ).
\end{align*}
Let $(F_n)_{n \in \N} \subset D_\alpha$ such that $\Vert F_n - F \Vert_\infty \xrightarrow[n \to \infty]{} 0$. Then there exists $N > 0$ such that for all $n \geq N$ 
\begin{align*}
    F_n(F^{-1}(\alpha) - \varepsilon ) < \alpha < F_n(F^{-1}(\alpha) + \varepsilon )
\end{align*}
which implies that
\begin{align*}
    F^{-1}(\alpha) - \varepsilon  \leq F_n^{-1}(\alpha) \leq F^{-1}(\alpha) + \varepsilon.
\end{align*}
Thus $|  F_n^{-1}(\alpha)  -  F^{-1}(\alpha)  | \leq \varepsilon$ and $\phi_\alpha$ is continuous at $F$. 
\end{proof}

The following results shows that $q_{n,\alpha}^I$ has an asymptotic variance strictly less than the uninformed empirical quantile whenever the vector $cov_P(g,1_{]-\infty, F^{-1}(\alpha)]}) \neq 0 $.

\begin{theorem} \label{theoreme_quantile}
Let $F$ be the cumulative distribution function generating the sample and  $q_\alpha$ the $\alpha$-quantile of $F$,  $\alpha \in ]0,1[$.
\begin{enumerate}
    \item[$\bullet$] If $F$ is strictly increasing at  $F^{-1}(\alpha)$ then $$ q_{n,\alpha}^I \xrightarrow[n \to \infty]{a.s.} q_\alpha. $$
    \item[$\bullet$] If $F$ is differentiable at $F^{-1}(\alpha)$ and $F^\prime(F^{-1}(\alpha)) = f(F^{-1}(\alpha)) > 0$ then
    \begin{align*}
        \sqrt{n}(q_{n,\alpha}^I - q_\alpha) \Rightarrow \mathcal{N}\left(0 , \frac{\alpha(1-\alpha) - \tilde{I} }{\left(f(F^{-1}(\alpha)\right)^2} \right)
    \end{align*}
    where $\tilde{I} = cov_P(g,1_{]-\infty, F^{-1}(\alpha)]})^T \Sigma^{-1} cov_P(g,1_{]-\infty, F^{-1}(\alpha)]}) \geq 0$. 
\end{enumerate}
\end{theorem}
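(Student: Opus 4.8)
The plan is to derive both assertions from the Glivenko--Cantelli and Donsker theorems for $\mathbb{P}_n^I$ established in Section~\ref{section_resultat_asymptotique}, applied to the class $\mathcal{F}_0 = \{1_{]-\infty,t]} : t \in \R\}$, combined with the continuity and Hadamard differentiability properties of the quantile map $\phi_\alpha$ from Lemma~\ref{lemme_cadlag} and the functional delta method. Note that $\mathcal{F}_0$ is both $P$-Glivenko--Cantelli and $P$-Donsker with bounded (hence $L^2(P)$) envelope $F \equiv 1$, and that $q_{n,\alpha}^I = \phi_\alpha(\mathbb{F}_{n,I})$, $q_\alpha = \phi_\alpha(F)$, with $F$ and all $\mathbb{F}_{n,I}$ in the domain $D_\alpha$ by hypothesis.

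For the almost sure convergence, Theorem~\ref{resultat_glivenko_cantelli} gives $\|\mathbb{F}_{n,I} - F\|_\infty \to 0$ almost surely. Since $F$ is strictly increasing at $F^{-1}(\alpha)$, the first part of Lemma~\ref{lemme_cadlag} says $\phi_\alpha$ is continuous at $F$, so the continuous mapping theorem (along the a.s.\ convergent sequence) yields $q_{n,\alpha}^I \to q_\alpha$ almost surely.

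For the central limit theorem, Theorem~\ref{resultat_Donsker} gives $\alpha_n^I = \sqrt{n}(\mathbb{F}_{n,I} - F) \Rightarrow G_I$ in $\ell^\infty(\R)$, where $G_I(t) = G(t) - cov_P(g,1_{]-\infty,t]})^T \Sigma^{-1} G(g)$ and $G$ is the $P$-Brownian bridge indexed by $\mathcal{F}_0$. Differentiability of $F$ at $F^{-1}(\alpha)$ forces continuity of $F$ there, hence $t \mapsto cov_P(g,1_{]-\infty,t]}) = P(g\,1_{]-\infty,t]}) - (Pg)F(t)$ is continuous at $F^{-1}(\alpha)$; together with the $\rho$-continuity of the paths of $G$, this shows almost every path of $G_I$ lies in $D_0 = \{h : h \text{ continuous at } F^{-1}(\alpha)\}$. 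By the second part of Lemma~\ref{lemme_cadlag}, $\phi_\alpha$ is Hadamard differentiable at $F$ tangentially to $D_0$ with derivative $h \mapsto -h(F^{-1}(\alpha))/f(F^{-1}(\alpha))$, so the functional delta method gives
\[
\sqrt{n}(q_{n,\alpha}^I - q_\alpha) = \sqrt{n}\bigl(\phi_\alpha(\mathbb{F}_{n,I}) - \phi_\alpha(F)\bigr) \Rightarrow -\frac{G_I(F^{-1}(\alpha))}{f(F^{-1}(\alpha))}.
\]

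It then remains to identify the limit law. The variable $G_I(F^{-1}(\alpha))$ is mean-zero Gaussian with variance, by Theorem~\ref{resultat_Donsker} applied to $f = 1_{]-\infty,F^{-1}(\alpha)]}$, equal to $Var_P(1_{]-\infty,F^{-1}(\alpha)]}) - \tilde I = F(F^{-1}(\alpha))(1-F(F^{-1}(\alpha))) - \tilde I$; continuity of $F$ at $F^{-1}(\alpha)$ gives $F(F^{-1}(\alpha)) = \alpha$, so this equals $\alpha(1-\alpha) - \tilde I$, and dividing by $(f(F^{-1}(\alpha)))^2$ yields the stated variance, with $\tilde I \geq 0$ because $\Sigma^{-1} > 0$. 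The main obstacle is the verification of the tangency condition, i.e.\ that $G_I$ almost surely has sample paths continuous at $F^{-1}(\alpha)$ so that the delta method applies; this relies precisely on the continuity of $F$ (hence of $t \mapsto cov_P(g,1_{]-\infty,t]})$) at $F^{-1}(\alpha)$ and on the path-continuity of the Brownian bridge, together with checking that the relevant maps are suitably measurable along the given domain $D_\alpha$.
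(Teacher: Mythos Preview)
Your proof is correct and follows essentially the same approach as the paper: apply Theorems~\ref{resultat_glivenko_cantelli} and~\ref{resultat_Donsker} to the indicator class, then invoke the continuity and Hadamard differentiability of $\phi_\alpha$ from Lemma~\ref{lemme_cadlag} together with the functional delta method, and compute the limiting variance via $F(F^{-1}(\alpha))=\alpha$. The only point the paper treats with a bit more care is that $D_0$ in Lemma~\ref{lemme_cadlag} is a subset of $\mathcal{S}=D(\R)\cap l^\infty(\R)$, so one also needs the paths of $G_I$ to be c\`adl\`ag on $\R$ (not just continuous at $F^{-1}(\alpha)$); the paper checks this by decomposing $G_I$ as a sum of $s\mapsto G(f_s)$ and $s\mapsto cov_P(g,1_{]-\infty,s]})^T\Sigma^{-1}G(g)$ and verifying each piece is c\`adl\`ag via dominated convergence, which is a routine addition to your argument.
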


\begin{proof}
Remind that the class of function $\mathcal{F} = \{f_s = 1_{]-\infty,s]}, \ s \in \R \}$ is $P$-Donsker and so $P$-Glivenko-Cantelli. By Theorem \ref{resultat_glivenko_cantelli} and \ref{resultat_Donsker}, we have 
\begin{align*}
    \Vert \mathbb{F}_{n,I} - F \Vert_{\infty} &\xrightarrow[n \to \infty]{a.s.} 0, \\ 
    \sqrt{n}(\mathbb{F}_{n,I} - F) &\Rightarrow G_I \ in \ l^\infty(\R),
\end{align*}
where for all $s \in \R$, $G_I(s) := G_I(f_s) = G(f_s) - cov_P(g,f_s)^T \Sigma^{-1} G(g)$. Observe that $G_I \in l^\infty(\R)$. To get the first assertion, apply Lemma \ref{lemme_cadlag} to obtain $$\phi_\alpha(\mathbb{F}_{n,I}) \xrightarrow[n \to \infty]{a.s.} \phi_\alpha(F).$$ 

To derive the second assertion, let prove that $G_I$ is càdlàg on $\R$ and continuous at $F^{-1}(\alpha)$. For that it is enough to prove that these following real-valued maps  are càdlàg on $\R$ and continuous at $F^{-1}(\alpha)$
\begin{align*}
    \psi_1 : s &\mapsto G(f_s), \\
    \psi_2 : s &\mapsto cov_P(g,1_{]-\infty, s]}) = Pg 1_{]-\infty, s]} -  Pg F(s).
\end{align*}
Concerning $\psi_1$, recall that $G$ is a $P$-Brownian bridge having almost surely continuous sample paths with respect to the semimetric $\rho^2(h_1,h_2)= Var_P \left( h_1 - h_2\right) $ for all $h_1,h_2 \in L^2(P)$ -- see Theorem \ref{resultat_Donsker}. The map $\varphi : \left( \R, |\cdot | \right) \to \left( \mathcal{F}, \rho \right)$ defined by $\varphi(s) = f_s$ for all $s \in \R$ is càdlàg and continuous at $F^{-1}(\alpha)$. As a matter of fact, $F$ is continuous at $F^{-1}(\alpha)$ and for all $s,s^\prime$ 
\begin{align*}
    \rho(f_s,f_{s^\prime}) = \sqrt{Var_P \left( f_s - f_{s^\prime}\right) } \leq \Vert f_s - f_{s^\prime} \Vert_{L^2(P)} = \sqrt{F(s) + F(s^\prime) -2 F\left( \min(s,s^\prime) \right) }
\end{align*}
hence $\psi_1$ is càdlàg on $\R$ and continuous at $F^{-1}(\alpha)$. Concerning $\psi_2$, let show that $s \mapsto P\varphi_g(P) 1_{]-\infty, s]}$ is càdlàg. Let $(s_n)_{n \in \N^*} \subset \R$ be a decreasing sequence tending to  $s$ when $n \to +\infty$. Since $g \in L^1(P)$, we deduce by dominated convergence theorem, 
\begin{align*}
   Pg 1_{]-\infty, s_n]} \xrightarrow[n \to \infty]{}  Pg 1_{]-\infty, s]}.
\end{align*}
Likewise, by dominated convergence theorem,
\begin{align*}
    Pg 1_{]-\infty, s^\prime]} \xrightarrow[s^\prime \to s, \ s^\prime < s]{} Pg 1_{]-\infty, s[}.
\end{align*}
Since $F$ is continuous at $F^{-1}(\alpha)$, we deduce that $\psi_2$ is càdlàg and continuous at $F^{-1}(\alpha)$. Finally the functional delta method and Lemma \ref{lemme_cadlag} readily imply 
\begin{align*}
    \sqrt{n}(q_{n,\alpha}^I - q_\alpha) = \sqrt{n}( \phi_\alpha(\mathbb{F}_{n,I})  -  \phi_\alpha(F)) \Rightarrow \phi_\alpha^\prime(G_{I}) = - \frac{G_I(F^{-1}(\alpha))}{f(F^{-1}(\alpha))}.
\end{align*}
Since $F(F^{-1}(\alpha)) = \alpha$, we have $Var_P(1_{]-\infty, F^{-1}(\alpha)]}) = \alpha(1-\alpha) $ . 
\end{proof}

To conclude, besides the asymptotics of Theorem \ref{theoreme_quantile} let show that the information $I$ impacts also small samples. Let use $n=210$ i.i.d. random variables with distribution $P = \mathcal{N}(0,1)$ to estimate sequentially the median of $P$ -- that is $0$. It is assumed that the auxiliary information $I$ is given by $Pg$ with $g(x) = (x,x^2)$ for all $x \in \R$. We draw at figure $4$ $q_{n,\alpha}$ the sequences $q_{n,\alpha}^I$ for $\alpha = \frac{1}{2}$ for every $n \in  \llbracket 2,210 \rrbracket$ -- with the same sample.

\vspace{1cm}

\begin{figure}[h] \label{figure_mediane}
\begin{center}
\includegraphics[scale = 0.8]{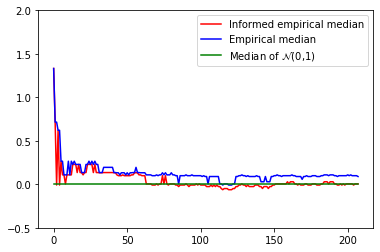} 
\caption{Medians}
\label{Figure}
\end{center}
\end{figure}

% \newpage

\noindent{\bf Acknowledgements}{ We are grateful to P. Berthet for his advises during the preparation of the paper. }

\end{document}